\chardef\bslash=`\\ 
\newtheorem{theorem}{Theorem}[section]
\newtheorem{corollary}[theorem]{Corollary}
\newtheorem{lemma}[theorem]{Lemma}
\newtheorem{proposition}[theorem]{Proposition}
\theoremstyle{definition}
\newtheorem{remark}{Remark}[section]
\newtheorem{question}{Question}[section]
\theoremstyle{remark}
\newcommand{\B}{\mathcal{B}}
\def\acts{\curvearrowright}
\newcommand{\R}{\mathbb{R}}
\newcommand{\N}{\mathbb{N}}
\newcommand{\Z}{\mathbb{Z}}
\newcommand{\E}{\mathbb{E}}
\newcommand{\C}{\mathcal{C}}
\newcommand{\invAlg}{\mathcal{I}}
\newcommand{\supp}{\mathrm{supp}}
\newcommand{\Meas}{\mathcal{M}(X)}
\newcommand{\invMeasG}{\mathcal{M}_G(X)}
\newcommand{\invMeasH}{\mathcal{M}_H(X)}
\newcommand{\ergMeasG}{\mathcal{M}^{\mathrm{erg}}_G(X)}
\newcommand{\ergMeasH}{\mathcal{M}^{\mathrm{erg}}_H(X)}
\newcommand{\gibMeas}{\mathcal{G}(X,\Phi)}
\newcommand{\F}{\mathcal{F}}
\newcommand{\eval}[2][\right]{\relax
  \ifx#1\right\relax \left.\fi#2#1\rvert}
\begin{document}
\title[An SMB approach for pressure representation]{An SMB approach for pressure representation\\in amenable virtually orderable groups}
\author[Raimundo Brice\~no]{Raimundo Brice\~no}
\address{School of Mathematical Sciences\\Tel Aviv University\\Tel Aviv 69978, Israel}
\email{raimundo@alumni.ubc.ca}

\subjclass[2010]{37B10, 37D35 (Primary); 37B40, 37B50, 20F60 (Secondary)}
\keywords{Amenable group; linearly ordered group; subshift; entropy; pressure; equilibrium state; Gibbs measure}

\begin{abstract}
Given a countable discrete amenable virtually orderable group $G$ acting by translations on a $G$-subshift $X \subseteq S^G$ and an absolutely summable potential $\Phi$, we present a set of conditions to obtain a special integral representation of pressure $\mathrm{P}(\Phi)$. The approach is based on a Shannon-McMillan-Breiman (SMB) type theorem for Gibbs measures due to Gurevich-Tempelman (2007), and generalizes results from Gamarnik-Katz (2009), Helvik-Lindgren (2014), and Marcus-Pavlov (2015) by extending the setting to other groups besides $\Z^d$, by relaxing the assumptions on $X$ and $\Phi$, and by using sufficient convergence conditions in a mean --instead of a uniform-- sense. Under the fairly general context proposed here, these same conditions turn out to be also necessary.
\end{abstract}

\maketitle

\setcounter{tocdepth}{2}
\tableofcontents

\section{Introduction}

The pressure is a very relevant quantity that appears in the study of statistical mechanical \cite{1-georgii,1-ruelle} and dynamical \cite{1-bowen,1-keller,1-walters} systems. It has several applications in many contexts, particularly in symbolic systems, the setting of this work. Sometimes it is also known (up to a sign) as the \emph{specific Gibbs free energy} and it is a generalization of \emph{topological entropy}, another fundamental concept, specifically in symbolic dynamics \cite{1-lind}, where its definition has a combinatorial nature sometimes very difficult to handle. Consequently, it is useful to find different and useful ways to express these quantities.

Our framework and approach are the following. Given a countable discrete \emph{amenable} group $G$ with unit $e$, a \emph{$G$-subshift} $X \subseteq S^G$, and a \emph{potential} $\Phi$ on $X$, it is known that --under mild conditions on $X$ and $\Phi$-- there is a correspondence between the (\emph{$G$-invariant}) \emph{Gibbs measures} for $\Phi$ and the \emph{equilibrium states} for the \emph{local energy} function $-\varphi \in \mathcal{C}(X)$ associated to $\Phi$ (see \cite{1-ruelle,1-tempelman}). As a consequence, in this scenario the \emph{pressure} of $\Phi$ can be expressed as
\begin{equation}
\mathrm{P}(\Phi) = h(\mu) + \int{\varphi}d\mu,
\end{equation}
where $\mu$ is any Gibbs measure for $\Phi$ and $h(\mu)$ is the \emph{Kolmogorov-Sinai entropy} of $\mu$. When, in addition, $G$ is a \emph{linearly ordered group} with \emph{(algebraic) past} $G^-$, the measure-theoretic entropy takes the form
\begin{equation}
h(\mu) = \int{I_{\mu,G^-}}d\mu,
\end{equation}
where $I_{\mu,G^-}$ the \emph{information function} of the ``present'' $e$ conditioned on the past $G^-$ (this was already discussed in \cite{1-marcus} for the case $G = \Z^d$; for the general linearly ordered case, see \cite[Theorem 3.1]{1-huang}). Therefore, under all these assumptions,
\begin{equation}
\mathrm{P}(\Phi) = \int{\left(I_{\mu,G^-} + \varphi\right)}d\mu.
\end{equation}

In recent work \cite{1-gamarnik,1-marcus,1-briceno,1-adams}, there has been a progressive understanding of conditions for expressing the pressure $\mathrm{P}(\Phi)$ of certain potentials $\Phi$ on certain $G$-subshifts $X$ in a simplified particular way. The idea of the technique relies on taking the integrand $I_{\mu,G^-} + \varphi$ (or a closely related expression; see, for example, \cite{1-adams}) and having sufficient conditions so that
\begin{equation}
\label{eq4}
\mathrm{P}(\Phi) = \int{\left(I_{\mu,G^-} + \varphi\right)}d\nu,
\end{equation}
where $\nu$ is some other (or in some special cases, \emph{any other}; see \cite{1-marcus}) $G$-invariant measure supported on $X$. A especially interesting case is when $\nu$ is supported on a finite subset of $X$, because the previous integral becomes just a finite sum. One of the main motivations for having a formula like this is because it is helpful --particularly in the atomic case just mentioned-- for proving the existence of efficient algorithms for approximating $\mathrm{P}(\Phi)$, a well-known and challenging problem \cite{1-hochman,1-pavlov,1-gamarnik,1-kasteleyn,1-lieb,2-marcus,3-marcus}.

In \cite{1-gamarnik}, one of the seminal works in this direction, Gamarnik and Katz named this technique \emph{sequential cavity method} and developed it for the case $G = \Z^d$ endowed with the \emph{lexicographic order}, $\Phi$ a \emph{finite range} nearest-neigbor potential $\Phi$ for which its (unique) Gibbs measure satisfies \emph{strong spatial mixing} \cite{1-bertoin}, $X$ a nearest-neighbour \emph{shift of finite type (SFT)} \cite{1-lind} having a \emph{safe symbol} $0$ (a very strong condition implying strong topological mixing properties on $X$), and $\nu = \delta_{0^{\Z^d}}$ the atomic measure supported on the fixed --for $\Z^d$ translations-- point $0^{\Z^d} \in X$ that is $0$ everywhere. In this context, they managed to express the pressure as just the evaluation $\mathrm{P}(\Phi) = \left(I_{\mu,G^-} + \varphi\right)(0^{\Z^d})$ and some slight variations of this formula. Later, in \cite{1-marcus}, the conditions on $X$ and $\Phi$ were relaxed and replaced by less stringent ones and, maybe more importantly, Marcus and Pavlov realized that $\mathrm{P}(\Phi)$ could be expressed as the integral with respect to more general --sometimes even arbitrary-- $G$-invariant measures $\nu$, obtaining basically something identical to what we present in Eq. (\ref{eq4}), that we refer as \emph{a pressure representation}. In following work \cite{1-briceno}, we explored combinatorial aspects on $X$ for which some of the conditions presented in \cite{1-gamarnik,1-marcus} hold and, in \cite{1-adams}, we studied a variation of the pressure representation, considering a version of $I_{\mu,G^-}$ sufficient for obtaining representation and approximation results when there is a \emph{phase transition}, i.e., when there are multiple Gibbs measures associated to $\Phi$.

In this paper, we continue with the development of this body of work. As a first generalization, we move from the $\Z^d$ setting to the more general case of $G$ being a countable discrete group having an amenable orderable subgroup $H \leq G$ of finite index $[G:H]$. Secondly, we do not ask $X$ to be an SFT and, instead of finite range potentials $\Phi$, we allow potentials to have infinite range provided they are \emph{absolutely summable}. Thirdly, we show that there is great flexibility in terms of the choice of an information function --or a set of them-- in order to express $\mathrm{P}(\Phi)$, resembling the work of Helvik and Lindgren \cite{1-helvik} but in a much more general (in particular, non-abelian) setting. Finally, previous convergence conditions over certain conditional probabilities that were required to be uniform over $x \in X$ (see \cite{1-marcus,1-adams}), are now replaced by weaker ones in a mean $L^1_\nu$ sense. Moreover, in the case $H = G$ and $\nu$ ergodic, these conditions turn out to be also necessary. Part of these generalizations are based on a particular Breiman type theorem due to Gurevich and Tempelman and their study of Gibbs measures in a similar setting \cite{1-gurevich,2-gurevich,1-tempelman}.

The paper is organized as follows: In Section \ref{sec2}, we introduced the basic notions concerning amenable and orderable groups, subshifts defined on them, and Gibbs measures and their corresponding formalism. In Section \ref{sec3}, we recall the ergodic and Shannon-McMillan-Breiman (SMB) theorems for amenable groups in our setting, prove some useful facts about F{\o}lner sequences, and explain a theorem due to Gurevich and Tempelman fundamental for our work. In Section \ref{sec4}, we show a special decomposition of the SMB ratio using the order in the subgroup $H$ and state the main pressure representation theorem (Theorem \ref{pressureRep}). Finally, in Section \ref{sec5}, we compare the new results presented here to those from previous work and discuss the case $\nu = \mu$.

\section{Preliminaries: groups, subshifts, and measures}
\label{sec2}

\subsection{Amenable and orderable groups}

Let $G$ be a countable discrete (i.e., endowed with the discrete topology) group with unit $e$. Given $g \in G$ and $T,T' \subseteq G$, we write $T^{-1} = \{h^{-1}: h \in T\}$, $gT = \{gh: h \in T\}$, $Tg = \{hg: h \in T\}$, and $TT' = \{hh': h \in T, h' \in T'\}$. Denote the \emph{set of finite subsets of $T$} as
\begin{equation}
\F(T) := \{M \subseteq T: |M| < \infty\},
\end{equation}
and its \emph{subset of finite subsets intersecting $T'$} as
\begin{equation}
\label{notation}
\F_{T'}(T) := \{M \in \F(T): M \cap T' \neq \emptyset\}.
\end{equation}

A sequence $\{T_n\}$ in $\F(G)$ is {\bf left F{\o}lner (for $G$)} if $\lim_n |T_n|^{-1}|gT_n \triangle T_n| = 0$ for all $g \in G$, where $\triangle$ denotes the symmetric difference operator. We will sometimes use the \emph{little-o} notation; e.g., the previous condition will be equivalent to say that $|gT_n \triangle T_n| = o(|T_n|)$ for all $g \in G$ (recall that for $f \geq 0$ and $g > 0$, $f(n) = o(g(n))$ means that $\lim_n f(n)/g(n) = 0$). Similarly, $\{T_n\}$ is {\bf right F{\o}lner} if $\lim_n |T_n|^{-1}|T_ng \triangle T_n| = 0$, and {\bf two-sided F{\o}lner} if it is both left and right F{\o}lner. Notice that $\{T_n\}$ is left F{\o}lner if and only if $\{T_n^{-1}\}$ is right F{\o}lner. In this context, a group $G$ is called {\bf amenable} if it has a left (or, equivalently, a right) F{\o}lner sequence. It can be proven that every amenable group has a two-sided F{\o}lner sequence \cite[Chapter I.\S1, Proposition 2]{1-ornstein}, but not every left F{\o}lner sequence is right F{\o}lner. From now on, when referring to F{\o}lner sequences, we will always assume that we are talking about \underline{left} F{\o}lner sequences.

A sequence $\{T_n\}$ is said to be {\bf tempered} (or that satisfies the {\bf Shulman condition}) if $\sup_{n \in \N} |T_n|^{-1}\left|\bigcup_{k < n} T_k^{-1}T_n\right| < \infty$. It can be proven that any F{\o}lner sequence has a tempered (F{\o}lner) subsequence \cite[Proposition 1.5]{1-lindenstrauss}. Therefore, in this setting, a group $G$ is amenable if and only if it has a tempered F{\o}lner sequence.

Now, let $\leqslant$ be a total order on $G$, and $<$ the corresponding strict total order. We say that $(G,\leqslant)$ is {\bf linearly left-ordered} if $h_1 \leqslant h_2 \implies gh_1 \leqslant gh_2$ for all $g,h_1,h_2 \in G$. Similarly, $(G,\leqslant)$ is {\bf linearly right-ordered} if $h_1 \leqslant h_2 \implies h_1g \leqslant h_2g$ for all $g,h_1,h_2 \in G$, {\bf linearly bi-ordered} if it is simultaneously both linearly left- and right-ordered, and simply {\bf linearly ordered} if it is linearly left-, right-, or bi-ordered. For any linearly ordered group $G$ we can define its {\bf (algebraic) past} as the set $G^- := \{g \in G: g < e\}$, which satisfies the following properties: (1) $G^-G^- \subseteq G^-$, (2) $G^- \cap G^+ = \emptyset$, and (3) $G^- \cup \{e\} \cup G^+ = G$, where $G^+ := (G^-)^{-1}$. Moreover, in the bi-ordered case, $G^-$ also satisfies (4) $gG^-g^{-1} = G^-$ for all $g \in G$. From now on, when talking about a linearly ordered group, we will always assume that we are talking about linearly \underline{right}-ordered groups. There is no loss of generality in this assumption, because if $(G,\leqslant)$ is linearly left-ordered, then $(G,\leqslant^*)$ is linearly right-ordered, where $h_1 \leqslant^* h_2 \iff h_1^{-1} \leqslant h_2^{-1}$. An {\bf orderable} group $G$ will be any group that admits a total order $\leqslant$ such that $(G,\leqslant)$ is linearly right- (or equivalently, left-) ordered. A good account of the role of orderings in dynamics and group theory can be found in \cite{1-deroin}.

A partial order $\leqslant$ on $G$ is said to be {\bf locally invariant} if for all $g_1, g_2 \in G$ with $g_2 \neq e$, we have either $g_1 \leqslant g_1g_2$ or $g_1 \leqslant g_1g_2^{-1}$. Clearly, if $(G,\leqslant)$ is a linearly ordered, then $\leqslant$ is locally invariant. In the amenable case, we also have the converse.

\begin{theorem}[{\cite{1-linnell}}]
Every amenable group with a locally invariant partial order is orderable.
\end{theorem}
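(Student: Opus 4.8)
The plan is to recast orderability in terms of a \emph{positive cone} and then use amenability to promote the purely local data carried by $\leqslant$ into a globally consistent order. Recall that (with our right-handed convention) $G$ is orderable if and only if it admits a subset $P \subseteq G$ that is a subsemigroup ($PP \subseteq P$) and satisfies the trichotomy $G = P \sqcup \{e\} \sqcup P^{-1}$: given such a $P$, the relation $g \leqslant h \iff g = h$ or $hg^{-1} \in P$ is a linear right-order (right-invariance is automatic since $hg^{-1} = (hk)(gk)^{-1}$), and conversely $P = \{g : e < g\}$ works. First I would reduce to the case where $G$ is finitely generated: orderability is a local property, since the set of candidate positive cones is a closed subset of the compact space $\{-1,0,+1\}^G$ cut out by conditions involving only finitely many coordinates at a time, so a compactness argument yields orderability of $G$ as soon as every finitely generated subgroup is orderable. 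This reduction is legitimate because the restriction of a locally invariant partial order to any subgroup $H \leqslant G$ is again locally invariant (the witnesses $g_1, g_1g_2, g_1g_2^{-1}$ remain inside $H$ when $g_1, g_2 \in H$), and subgroups of amenable groups are amenable.

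Next I would extract the combinatorial heart of local invariance. For any finite nonempty $F \in \F(G)$, choose a $\leqslant$-maximal element $a \in F$; then for every $g \neq e$, local invariance gives $a \leqslant ag$ or $a \leqslant ag^{-1}$, and maximality forces the relevant element out of $F$ (if $a \leqslant ag$ with $ag \in F$ then $ag = a$, i.e.\ $g = e$, a contradiction), so $\{ag, ag^{-1}\} \not\subseteq F$. Thus $\leqslant$ witnesses that $G$ is \emph{diffuse}, and in particular, at each point $x \in G$ and for each direction $g \neq e$, the order singles out a sign $\st_g(x) \in \{-1,0,+1\}$ equal to $+1$, $-1$, or $0$ according to whether $x < xg$ only, $x < xg^{-1}$ only, or both hold; local invariance guarantees at least one holds, and one checks $\st_{g^{-1}} = -\st_g$.

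The main step is to average this local sign data into a single positive cone, and this is where amenability is indispensable. Fixing a left-invariant mean $m$ on $\ell^\infty(G)$ (equivalently, averaging along a left F{\o}lner sequence $\{T_n\}$), I would set $\Phi(g) := m(\st_g)$ and attempt to build $P$ from the sign of $\Phi$. The antisymmetry $\Phi(g^{-1}) = -\Phi(g)$ and a version of the trichotomy come almost for free, but the delicate point --and the step I expect to be the main obstacle-- is \textbf{closure under multiplication}: nothing forces $\{g : \Phi(g) > 0\}$ to be a subsemigroup. To overcome this I would exploit the left-invariance of $m$ to show that the preferred directions are asymptotically coherent along $\{T_n\}$, and then run a Zorn/compactness refinement over $\F(G)$ --passing to sub-means or ultrafilter limits-- to upgrade the merely averaged sign function to a genuinely multiplicative one, producing an honest positive cone. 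As a fallback that repackages the same difficulty, I would keep in reserve the route through local indicability: show that every nontrivial finitely generated amenable diffuse subgroup surjects onto a nontrivial orderable group and invoke the Burns--Hale criterion, trading multiplicative closure for the production of a nontrivial orderable quotient, which for amenable groups is accessible through their abelianizations and subnormal structure.
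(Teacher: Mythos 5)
The paper does not actually prove this statement: it is imported with a citation (to Linnell--Witte Morris), so there is no internal proof to compare against, and I can only assess your argument on its own terms. Your preliminary reductions are correct and standard: the positive-cone reformulation of right-orderability, the compactness reduction to finitely generated subgroups (orderability is a local property, and both amenability and local invariance of $\leqslant$ pass to subgroups), and the observation that a $\leqslant$-maximal element $a$ of a finite set $F$ satisfies $\{ag,ag^{-1}\}\not\subseteq F$ for all $g\neq e$ (so $G$ is diffuse) are all fine.

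The gap is in the main step, and you have in effect flagged it yourself without closing it. Averaging the sign functions $\st_g$ against a left-invariant mean $m$ yields $\Phi(g)=m(\st_g)$ with $\Phi(g^{-1})=-\Phi(g)$, but (i) nothing prevents $\Phi(g)=0$ for $g\neq e$, so even trichotomy is not ``almost for free,'' and (ii) more seriously, $\{g:\Phi(g)>0\}$ has no reason to be a subsemigroup, and the proposed remedy (``a Zorn/compactness refinement over $\F(G)$, passing to sub-means or ultrafilter limits, to upgrade the averaged sign function to a genuinely multiplicative one'') names no mechanism; no such direct upgrade is known, and this is precisely where a proof would have to live. The fallback you keep in reserve is in fact the route of the cited theorem --- prove local indicability and invoke Burns--Hale --- but its crucial step cannot be delegated to ``abelianizations and subnormal structure'': finitely generated amenable groups can be perfect (there even exist finitely generated infinite simple amenable groups), so the required nontrivial orderable quotient of each nontrivial finitely generated subgroup must be manufactured from the locally invariant order itself. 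Carrying that out (roughly: use transitivity of the partial order together with a bi-invariant mean to show that $g\mapsto m(\{x:x<xg\})$ is a quasimorphism, use amenability to replace it by a genuine homomorphism to $\R$, and use the extreme-point property to show this homomorphism is nontrivial on nontrivial finitely generated subgroups) is the entire content of the theorem, and it is absent from your proposal.
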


In other words, the results discussed in this paper will also apply in the a priori more general case of amenable groups with locally invariant partial orders.

Given $H \subseteq G$, we denote by $H \leq G$ whenever $H$ is a {\bf subgroup} of $G$. For $g \in G$, we call $gH$ and $Hg$ the {\bf left coset} and {\bf right coset}, respectively. The {\bf index} of $H$ in $G$ is the cardinality of the number of different left (or right) cosets and it will be denoted by $[G:H]$. In this paper, we will deal with \underline{left} cosets and subgroups of finite index. A {\bf (left) transversal} will be any subset $K \subseteq G$ containing exactly one element from each (left) coset. In this case, we can write $G = KH$ and $kH \cap k'H = \emptyset$ for all $k,k' \in K$ such that $k \neq k'$.

Given a property $\mathrm{P}$ (e.g., being amenable or being orderable), a group $G$ is said to be {\bf virtually} $\mathrm{P}$ if there is a finite index subgroup $H \leq G$ such that $H$ has property $\mathrm{P}$. It is well-known that virtually amenable groups are amenable \cite[Corollary 4.5.8]{1-ceccherini}. On the other hand, virtually orderable groups are a strictly larger class than orderable groups. It suffices to consider the direct product $G \times N$ of an orderable group $G$ (with unit $e_G$) with a nontrivial finite group $N$ (with unit $e_N$) to obtain a virtually orderable group which is not orderable, since $(e_G,n) \in G \times N$ is a {\bf torsion element} for all $n \in N$, i.e., there exists $m \in \N$ such that $(e_G,n)^m = (e_G,e_N)$, and this is an obstruction to orderability \cite[Section 1.4.1]{1-deroin}. We say that a group is {\bf torsion-free} if the only torsion element is the identity $e$.

Examples of amenable orderable groups include all the discrete \emph{torsion-free abelian groups} (e.g., $\Z^d$ and $\mathbb{Q}^d$ endowed with the discrete topology) and, more generally, all the discrete \emph{torsion-free nilpotent groups} (e.g., the discrete Heisenberg group $H_3(\Z)$); see \cite{1-deroin}.

\subsection{$G$-subshifts and $G$-invariant measures}

Given a countable group $G$, a finite set $S$, consider $S^G = \{x: G \to S\}$ endowed with the product topology and the \underline{left} action $G \acts S^G$ by translations where, for an element $g \in G$ and a {\bf point} $x \in S^G$, the point $g \cdot x \in S^G$ is defined as $(g \cdot x)(h) = x(hg)$ for $h \in G$. Given $X \subseteq S^G$, we write $g \cdot X = \{g \cdot x: x \in X\}$ and say that $X$ is {\bf $G$-invariant} if $g \cdot X = X$ for all $g \in G$. A subset $X \subseteq S^G$ is a {\bf $G$-subshift} if it is a compact and $G$-invariant set. In such case, $X$ will be said to be a {\bf shift of finite type (SFT)} if there exists $M \in \F(G)$ and $\mathcal{P} \subseteq S^M$ such that
\begin{equation}
X = \left\{x \in S^G: (g \cdot x)_M \notin \mathcal{P} \text{ for all } g \in G\right\},
\end{equation}
where, for $T \subseteq G$ (finite or infinite), $x_T$ denotes the restriction $x_T: T \to S$ of the point $x$ to $T$. In addition, we denote by $[x_T] = \{y \in X: y_T = x_T\}$ the corresponding cylinder set, $X_T = \{x_T: x \in X\}$ the set of restrictions to $T$, and $\B_T$ the $\sigma$-algebra generated by $\{[x_K]: x \in X, K \in \F(T)\}$, where $\B_G$ corresponds to the {\bf Borel $\sigma$-algebra}. Notice that for $g \in G$, $T \subseteq G$, and $x \in X$,
\begin{equation}
g \cdot [x_T] = \bigcap_{h \in T} \{g \cdot y: y(h) = x(h)\} = \bigcap_{h \in Tg^{-1}} \{z: z(h) = (g \cdot x)(h)\} = [(g \cdot x)_{Tg^{-1}}].
\end{equation}

Whenever $T$ is a singleton $\{h\}$, we will omit the brackets and write $x_h$, $\F_h(T)$, etc.

Let $X$ be a $G$-subshift and $\Meas$ the set of all Borel probability measures $\nu$ on $X$. Given a subgroup $H \leq G$, we define $\invAlg_H := \{A \in \B_G: h \cdot A = A \text{ for all } h \in H\}$, the {\bf $\sigma$-algebra of $H$-invariant sets} in $\B_G$. We denote by $\invMeasH$ the set of {\bf $H$-invariant measures} in $\Meas$, i.e., the ones that satisfy $\nu(A) = \nu(h \cdot A)$ for all $h \in H$ and $A \in \B_G$. The set $\ergMeasH$ of {\bf $H$-ergodic measures} will be the one containing the measures $\nu \in \invMeasH$ for which $\invAlg_H$ is \emph{trivial}, i.e., $\nu(A) \in \{0,1\}$ for all $A \in \invAlg_H$.

Given $\nu \in \Meas$, we denote by $L^1_\nu$ the Banach space of real-valued measurable functions $f: X \to \R$ --or more precisely, classes of functions $\nu$-a.e. equally valued-- such that $\|f\|_{\nu} := \int{|f|}d\nu < \infty$. In addition, for $T \subseteq G$, we define the {\bf $T$-support} of $\nu$ as
\begin{equation}
\supp(\nu,T) := \left\{x \in X: \nu([x_M]) > 0 \text{ for all } M \in \F(T)\right\}.
\end{equation}

Notice that if $T_1 \subseteq T_2 \subseteq G$, then $X \supseteq \supp(\nu,T_1) \supseteq \supp(\nu,T_2)$. We will sometimes abbreviate the $G$-support of $\nu$ by $\supp(\nu)$.

\subsection{Gibbs measures}

Given a $G$-subshift $X$, consider now $\Phi: \F(G) \times X \to \R$ an {\bf (absolutely summable) potential}, i.e., a function $\Phi$ that satisfies the following properties:
\begin{enumerate}
\item $\Phi(M,x) = \Phi(M,y)$ for all $x,y \in X$ and $M \in \F(G)$ such that $x_M = y_M$.
\item $\|\Phi\| := \sum_{M \in \F_e(G)} \sup_{x \in X} |\Phi(M,x)| < \infty$.
\end{enumerate}

Following \cite{1-gurevich}, we will also assume that potentials are always $G$-invariant, i.e.,
\begin{equation}
\Phi(Mg,x) = \Phi(M,g \cdot x) \text{ for all $x \in X$, $M \in \F(G)$, and $g \in G$.}
\end{equation}

In addition, we define the function $\varphi: X \to \R$ as
\begin{equation}
\varphi(x) := -\sum_{M \in \F_e(G)} |M|^{-1}\Phi(M,x),
\end{equation}
which, since $\|\Phi\| < \infty$, corresponds to a continuous function in $\mathcal{C}(X)$. In \cite{1-tempelman}, $-\varphi$ is called the {\bf local energy} function; here, for consistency with past work, we prefer to include a minus sign `$-$'. For $x \in X$ and $T \in \F(G)$, define the {\bf energy of $x_T$} as
\begin{equation}
E(x_T) := \sum_{M \in \F(T)} \Phi(M,x).
\end{equation}

Similarly, given $y \in X$, we write $x_Ty_{T^{\rm c}} \in S^G$ to denote the {\bf concatenation} of $x_T$ and $y_{T^{\rm c}}$ (i.e., $(x_Ty_{T^{\rm c}})_T = x_T$ and $(x_Ty_{T^{\rm c}})_{T^{\rm c}} = y_{T^{\rm c}}$), and define the {\bf energy of $x_T$ given $y_{T^{\rm c}}$} as
\begin{equation}
E(x_T \vert y_{T^{\rm c}}) := 
\begin{cases}
\sum_{M \in \F_T(G)} \Phi(M,x_Ty_{T^{\rm c}})	&	\text{ if } x_Ty_{T^{\rm c}} \in X,	\\
+\infty							&	\text{ otherwise.}
\end{cases}
\end{equation}

Hence, we can define the {\bf partition function on $T$ with free boundary condition} as
\begin{equation}
Z(T) := \sum_{x_T \in X_T} \exp[-E(x_T)]
\end{equation}
and, analogously, the {\bf partition function on $T$ with boundary condition $y_{T^{\rm c}}$} as
\begin{equation}
Z(T \vert y_{T^{\rm c}}) := \sum_{x_T \in X_T} \exp[-E(x_T \vert y_{T^{\rm c}})],
\end{equation}
where $\exp[-\infty] = 0$. Considering this, the {\bf Gibbs $(X,\Phi)$-specification} $\pi$ is defined as the collection $\pi = \{\pi_T^y\}_{y \in X, T \in \F(G)}$, where
\begin{equation}
\pi^y_T([x_T]) := \frac{\exp[-E(x_T \vert y_{T^{\rm c}})]}{Z(T \vert y_{T^{\rm c}})}.
\end{equation}

A measure $\mu \in \invMeasG$ is a {\bf ($G$-invariant) Gibbs measure} if
\begin{equation}
\mu([x_T] \vert \mathcal{B}_{T^{\rm c}})(y) \stackrel{\mu(y)\text{-a.e.}}{=} \pi^y_T([x_T])
\end{equation}
for all $T \in \F(G)$ and $x_T \in X_T$, where $\mu([x_T] \vert \mathcal{B}_{T^{\rm c}}) := \E_\mu[1_{[x_T]} \vert \mathcal{B}_{T^{\rm c}}]$ is the conditional expectation with respect to $\mathcal{B}_{T^{\rm c}}$ of $1_{[x_T]}$, the characteristic function of $[x_T] \in \B_G$. We denote by $\gibMeas$ the set of Gibbs measures on $X$ for the potential $\Phi$. The set $\gibMeas$ could be empty (e.g., consider \cite[Example (4.16)]{1-georgii}, where it is basically proven that $X = \{\{0,1\}^G: |\{g \in G: x(g) = 1\}| \leq 1\}$ for any denumerable group $G$ cannot support a Gibbs measure), but sometimes it can be guaranteed it is not. For example, if $X$ is an SFT (see \cite{1-ruelle}), then $\mathcal{G}(X,\Phi) \neq \emptyset$.

Finally, given a F{\o}lner sequence $\{T_n\}$ for $G$, we write
\begin{equation}
\mathrm{P}(\Phi,\{T_n\}) := \lim_n |T_n|^{-1} \log Z(T_n)
\end{equation}
whenever the limit exists. If $\mathrm{P}(\Phi,\{T_n\})$ exists and does not depend on $\{T_n\}$, we denote it  by $\mathrm{P}(\Phi)$ and call it the {\bf pressure (of $\Phi$)}.

\subsection{Condition (D) and some implications}

Given $\{T_n\}$ F{\o}lner for $G$, we say that the pair $(X,\{T_n\})$ satisfies {\bf condition (D)} (see \cite[Chapter 4.1]{1-ruelle} for the case $G = \Z^d$) if for all $n \in \N$, there exists $\hat{T}_n \supseteq T_n$ such that $\lim_n \frac{|\hat{T}_n|}{|T_n|} = 1$ and for all $x,y \in X$, there exists $z \in X$ with $z_{T_n} = x_{T_n}$ and $z_{\hat{T}^{\rm c}_n} = y_{\hat{T}^{\rm c}_n}$.

\begin{remark}
Notice that any subsequence of a F{\o}lner sequence is also F{\o}lner. Similarly, condition (D) is also preserved under subsequences.
\end{remark}

From now on, we fix $G$ to be a countable discrete amenable group and $X$ a nonempty $G$-subshift.

\begin{lemma}[{\cite[Lemma 2.3]{2-gurevich}}]
\label{bounds}
If $\{T_n\}$ is a F{\o}lner sequence for $G$ and $(X,\{T_n\})$ satisfies condition (D), then there exists a sequence of positive numbers $\{r_n\}$ with $r_n = o(|T_n|)$ such that for all $\mu \in \mathcal{G}(X,\Phi)$ and $x, \tilde{x} \in X$,
\begin{equation}
\exp[-r_n] \leq \frac{\mu([x_{T_n}]) \exp[E(x_{T_n})]}{\mu([\tilde{x}_{T_n}]) \exp[E(\tilde{x}_{T_n})]} \leq \exp[r_n],
\end{equation}
with $r_n = |\hat{T}_n \setminus T_n|(\log |S| + 4\|\Phi\|) + 2\sup_{z \in X} |E(z_{T_n} \vert z_{T_n^{\rm c}}) - E(z_{T_n})|$. In particular,
\begin{equation}
\label{eq10}
\exp[-r_n] \leq \mu([x_{T_n}]) \exp[E(x_{T_n})] Z(T_n) \leq \exp[r_n],
\end{equation}
uniformly over $x \in X$.
\end{lemma}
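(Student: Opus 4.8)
The plan is to reduce the ``in particular'' inequality \eqref{eq10} to the two-sided ratio bound, and then to prove the ratio bound by conditioning on $\B_{\hat T_n^{\rm c}}$ and using condition (D) to switch the configuration on $T_n$ while holding the configuration on $\hat T_n^{\rm c}$ fixed. Throughout I write $T = T_n$, $\hat T = \hat T_n$, and $\beta_n := \sup_{z\in X}\lvert E(z_T\vert z_{T^{\rm c}}) - E(z_T)\rvert$, so the target exponent is $r_n = |\hat T\setminus T|(\log|S| + 4\|\Phi\|) + 2\beta_n$. The reduction is immediate: assuming the ratio bound, since $Z(T) = \sum_{\tilde x_T\in X_T}\exp[-E(\tilde x_T)]$ and $\sum_{\tilde x_T\in X_T}\mu([\tilde x_T]) = 1$, multiplying the ratio bound by $\exp[-E(\tilde x_T)]$ and summing over $\tilde x_T\in X_T$ squeezes $\mu([x_T])\exp[E(x_T)]Z(T)$ between $\exp[-r_n]$ and $\exp[r_n]$, which is \eqref{eq10}, uniformly in $x$. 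So everything rests on the ratio bound.

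For the ratio bound, fix $x,\tilde x\in X$ and $\mu\in\gibMeas$. Decomposing $[x_T]$ into cylinders over $\hat T$ and applying the Gibbs property to the finite set $\hat T$ gives, for $\mu$-a.e.\ $y$,
\[
\mu([x_T]\vert\B_{\hat T^{\rm c}})(y) = \frac{1}{Z(\hat T\vert y_{\hat T^{\rm c}})}\sum_{\substack{w_{\hat T}\in X_{\hat T}:\ w_T = x_T\\ w_{\hat T}y_{\hat T^{\rm c}}\in X}}\exp[-E(w_{\hat T}\vert y_{\hat T^{\rm c}})],
\]
and similarly for $\tilde x$. The decisive feature is that in the ratio of these two conditional expectations the normalizer $Z(\hat T\vert y_{\hat T^{\rm c}})$ cancels pointwise in $y$. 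Condition (D) enters in two ways: first, it guarantees that for every $y$ there is at least one compatible $w$ with $w_T = x_T$ and $w_{\hat T^{\rm c}} = y_{\hat T^{\rm c}}$, which both supplies a clean single-term lower bound for the numerator and shows $\mu([x_T])>0$; second, the free part of the extension lives on the annulus $\hat T\setminus T$, so the sum has at most $|S|^{|\hat T\setminus T|}$ terms.

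The analytic content is the energy bookkeeping. I would show that for any compatible $w$ with $w_T = x_T$, setting $u := w_{\hat T}y_{\hat T^{\rm c}}\in X$,
\[
E(w_{\hat T}\vert y_{\hat T^{\rm c}}) = E(x_T) + \big(E(x_T\vert u_{T^{\rm c}}) - E(x_T)\big) + \sum_{\substack{M\in\F(G):\ M\cap\hat T\neq\emptyset\\ M\cap T=\emptyset}}\Phi(M,u),
\]
where the middle term is bounded by $\beta_n$ and the final sum, whose index sets all meet the annulus, is bounded by $|\hat T\setminus T|\,\|\Phi\|$ via $G$-invariance and the identity $\sum_{M\ni s}\sup_X\lvert\Phi(M,\cdot)\rvert = \|\Phi\|$. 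Pairing the one-term lower bound on the numerator with the $|S|^{|\hat T\setminus T|}$-term upper bound on the denominator, taking the ratio so that $Z(\hat T\vert\cdot)$ cancels, and integrating the resulting pointwise inequality against $\mu$ yields $\mu([x_T])\exp[E(x_T)]\le\exp[r_n]\,\mu([\tilde x_T])\exp[E(\tilde x_T)]$, with the reverse inequality by symmetry. Accounting (generously) for the two boundary contributions, one for $x$ and one for $\tilde x$, together with the annulus entropy $|\hat T\setminus T|\log|S|$, stays comfortably inside $|\hat T\setminus T|(\log|S| + 4\|\Phi\|) + 2\beta_n$.

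It remains to check $r_n = o(|T_n|)$. The annulus term is $o(|T_n|)$ at once, since $|\hat T\setminus T| = |\hat T_n| - |T_n| = o(|T_n|)$ by condition (D). The principal hurdle is $\beta_n = o(|T_n|)$: I would bound $\beta_n\le\sum_{s\in T_n}a_s(T_n)$ with $a_s(T) := \sum_{M\ni s,\ M\not\subseteq T}\sup_X\lvert\Phi(M,\cdot)\rvert$, use $G$-invariance to write $a_s(T) = a(Ts^{-1})$ for a fixed tail function that drops below any $\varepsilon$ once its argument contains a large enough finite $F$ (by absolute summability), and then invoke the left F\o lner property to see that all but $o(|T_n|)$ of the sites $s\in T_n$ satisfy $Fs\subseteq T_n$; letting $\varepsilon\downarrow 0$ finishes the estimate. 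Thus the conceptual crux is the conditioning-on-$\hat T$ argument powered by condition (D), while the F\o lner tail estimate for $\beta_n$ is the main technical obstacle.
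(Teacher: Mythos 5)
The paper does not actually prove this lemma: it is imported verbatim from \cite[Lemma 2.3]{2-gurevich}, so there is no in-paper argument to compare against. Your blind proof is correct and follows the standard route for such Gibbs-ratio bounds: the reduction of Eq.~(\ref{eq10}) to the two-point ratio by summing over $\tilde{x}_{T_n} \in X_{T_n}$ is valid, the conditioning on $\B_{\hat{T}_n^{\rm c}}$ with condition (D) supplying a compatible single-term lower bound and $|S|^{|\hat{T}_n\setminus T_n|}$ an upper count is exactly right, the two energy estimates (the annulus bound $|\hat{T}_n\setminus T_n|\,\|\Phi\|$ via $G$-invariance and the boundary term $\beta_n$) land inside the stated exponent, and your F{\o}lner tail argument for $\beta_n = o(|T_n|)$ is sound. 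As a bonus, your construction makes explicit that $r_n$ depends only on $X$, $\Phi$, $S$, and $\{\hat{T}_n\}$ and not on $\mu$, which is precisely the uniformity over $\mathcal{G}(X,\Phi)$ asserted in the remark following the lemma.
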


\begin{remark}
In \cite[Lemma 2.3]{2-gurevich} it is stated that for every $\mu \in \mathcal{G}(X,\Phi)$ such sequence $\{r_n\}$ exists. However, it can be checked that the sequence does not really depend on any particular choice of $\mu$, and we can pick $\{r_n\}$ so it works uniformly over $\mu \in \mathcal{G}(X,\Phi)$.
\end{remark}

Notice that $|E(x_{T_n})| \leq |T_n|\|\Phi\|$, and therefore $Z(T_n) \leq |S|^{|T_n|}\exp[|T_n|\|\Phi\|]$. In particular, if $\mathrm{P}(\Phi)$ exists, then $\mathrm{P}(\Phi) \leq \log|S| + \|\Phi\|$ and for every $x \in X$,
\begin{align}
\label{smbratio}
0	\leq	-|T_n|^{-1} \log\mu([x_{T_n}])	&	\leq	|T_n|^{-1}\left[E(x_{T_n}) + \log Z(T_n) + o(|T_n|)\right]	\\
								&	\leq	\log|S| + 2\|\Phi\| + o(1),
\end{align}
so $-|T_n|^{-1} \log\mu([x_{T_n}])$ is bounded as a function of $x$ and $n$.

\begin{corollary}
\label{corsupp}
If $\{T_n\}$ is a F{\o}lner sequence for $G$ and $(X,\{T_n\})$ satisfies condition (D), then $\supp(\mu) = X$ for every $\mu \in \mathcal{G}(X,\Phi)$.
\end{corollary}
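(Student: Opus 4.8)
The plan is to show that $\mu([x_M]) > 0$ for every $x \in X$ and every $M \in \F(G)$, since this is exactly the statement that $\supp(\mu,G) = \supp(\mu) = X$. Fix such $x$ and $M$. The idea is to use \lemref{bounds}, specifically the uniform bound \eqref{eq10}, but applied at a \emph{larger} scale: choose $n$ large enough that $M \subseteq T_n$, so that the cylinder $[x_{T_n}]$ is contained in $[x_M]$ and hence $\mu([x_M]) \geq \mu([x_{T_n}])$. It therefore suffices to prove that $\mu([x_{T_n}]) > 0$ for every $n$ and every $x \in X$.

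For this, I would invoke \eqref{eq10}: for all $x \in X$ and all $n$,
\[
\mu([x_{T_n}]) \geq \exp[-r_n]\exp[-E(x_{T_n})] Z(T_n)^{-1}.
\]
Every factor on the right-hand side is strictly positive and finite: $\exp[-r_n] > 0$ since $r_n$ is a finite positive number; $\exp[-E(x_{T_n})] > 0$ since $|E(x_{T_n})| \leq |T_n|\|\Phi\| < \infty$ by absolute summability of $\Phi$; and $0 < Z(T_n) = \sum_{x_{T_n} \in X_{T_n}} \exp[-E(x_{T_n})] < \infty$ because $X_{T_n}$ is a nonempty finite set (as $X$ is a nonempty $G$-subshift and $S$ is finite) and each summand is a positive real. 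Hence $\mu([x_{T_n}]) > 0$, and the argument is complete.

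There is no serious obstacle here; the only thing to be slightly careful about is that \lemref{bounds} (and the uniformity remark following it) indeed gives a single sequence $\{r_n\}$ with $r_n < \infty$ valid for all $\mu \in \mathcal{G}(X,\Phi)$ and all $x \in X$, so that the lower bound above holds genuinely uniformly in $x$ (though uniformity is not even needed — pointwise positivity for each fixed $x$ suffices). One should also note that condition (D) is used only through its role in \lemref{bounds}; once \eqref{eq10} is in hand, the positivity of $\mu$ on cylinders is immediate. As a final remark, the conclusion $\supp(\mu) = X$ in particular does not depend on the choice of F{\o}lner sequence $\{T_n\}$, even though the intermediate estimates do.
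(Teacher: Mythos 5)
Your overall strategy is the same as the paper's: reduce to showing $\mu([x_{T_n}])>0$ and extract a strictly positive lower bound from Eq.~(\ref{eq10}) of Lemma~\ref{bounds}. That part of your argument is correct, and your accounting of why each factor is positive and finite is fine. However, there is one step that fails as written: you ``choose $n$ large enough that $M \subseteq T_n$.'' A F{\o}lner sequence need not be nested, need not exhaust $G$, and need not contain any prescribed finite set $M$ for even a single $n$ (in $\Z$, take $T_n = \{n, n+1, \dots, 2n\}$ and $M=\{0\}$). So the containment $[x_{T_n}] \subseteq [x_M]$ that you rely on is not available in general. Your closing remark that the conclusion does not depend on the choice of $\{T_n\}$ does not repair this, because condition~(D) --- which is what makes Lemma~\ref{bounds} applicable --- is assumed only for the given sequence, so you cannot silently pass to a more convenient one.

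The paper's proof handles exactly this point: one can always find $n$ and $g \in G$ with $Mg \subseteq T_n$ (this follows from the F{\o}lner property; a stronger quantitative version is Lemma~\ref{lemBR}), and then the $G$-invariance of $\mu$ lets one replace $[x_M]$ by a cylinder over $Mg \subseteq T_n$ based at the translated point, reducing to the bound $\mu([z_{T_n}])>0$ uniformly over $z \in X$ --- which is precisely the estimate you already have. So the gap is genuine but local and easily repaired by inserting the translation-and-invariance step.
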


\begin{proof}
It suffices to prove that $\mu([x_M]) > 0$ for arbitrary $x \in X$ and $M \in \F(G)$. Pick $n \in \N$ sufficiently large so that $Mg \subseteq T_n$ for some $g \in G$. This can be always done (we will prove a stronger statement in Lemma \ref{lemBR}). W.l.o.g., by $G$-invariance of $\mu$, assume that $g = e$. Then, by Lemma \ref{bounds} and Eq. (\ref{eq10}), we conclude that
\begin{equation}
\mu([x_M]) \geq \mu([x_{T_n}]) \geq e^{-r_n} \frac{\exp[-E(x_{T_n})]}{Z(T_n)} \geq  e^{-r_n} |S|^{-|T_n|} e^{-2|T_n| \|\Phi\|} > 0.
\end{equation}
\end{proof}

\section{Ergodic and Shannon-McMillan-Breiman theorems}
\label{sec3}

One of the main goals of this section is to state the ergodic and Shannon-McMillan-Breiman (SMB) theorems, where for the latter it will be required to introduce a special function --the \emph{information function}-- that depends on an algebraic past (and therefore an order) in the group involved. Ultimately, we will show how these two fundamental theorems relate in order to obtain a pressure representation theorem (see Section \ref{sec4}). In particular, by the end of this section, we will discuss a special version of the SMB theorem (the pointwise --or ``Breiman''-- version of it) that holds for Gibbs measures. Before all this, we will need some basic facts concerning decomposition and manipulation of F{\o}lner sequences that will be useful for understanding pressure in the amenable context.

In addition to $G$ and $X$, we will fix $(H,\leqslant)$ to be an amenable linearly right-ordered subgroup $H \leq G$ of finite index $d := [G:H] \in \N$ and $K = \{k_1,\dots,k_d\} \subseteq G$ a left transversal so that $G = KH$ and $k_iH \cap k_jH = \emptyset$ for $i \neq j$.

\subsection{Preliminary facts about F{\o}lner sequences}

Recall that virtually amenable groups are amenable. One way to see this in our context is through the following lemma.

\begin{lemma}
\label{fol1}
If $\{F_n\}$ is F{\o}lner for $H$, then $\{T_n\} = \{KF_n\}$ is F{\o}lner for $G$.
\end{lemma}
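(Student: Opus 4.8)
The plan is to verify the left-Følner condition for $\{T_n\} = \{KF_n\}$ directly from the definition: I need to show $|gT_n \triangle T_n| = o(|T_n|)$ for every $g \in G$. First I would record the elementary size estimate $|T_n| = |KF_n|$; since $K$ is a transversal and the cosets $k_iH$ are disjoint, and each $k_iF_n \subseteq k_iH$, the sets $k_1F_n, \dots, k_dF_n$ are pairwise disjoint, so $|T_n| = d|F_n|$. In particular $|T_n|$ and $|F_n|$ are comparable up to the constant $d = [G:H]$, so it suffices to show $|gT_n \triangle T_n| = o(|F_n|)$.

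The key step is to reduce the $G$-translate of $T_n$ to $H$-translates of $F_n$. Fix $g \in G$. For each $i$, the element $gk_i$ lies in a unique coset, say $gk_i \in k_{\sigma(i)}H$, so $gk_i = k_{\sigma(i)}h_i$ for some $h_i \in H$; note $\sigma$ is a permutation of $\{1,\dots,d\}$ because $g$ acts bijectively on cosets by left multiplication. Then $gT_n = \bigcup_i gk_iF_n = \bigcup_i k_{\sigma(i)}h_iF_n = \bigcup_j k_j h_{\sigma^{-1}(j)}F_n$. Since the union on the right is over distinct cosets $k_jH$, it is a disjoint union, and likewise $T_n = \bigsqcup_j k_jF_n$. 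Because symmetric difference commutes with disjoint unions indexed the same way, I get
\begin{equation}
|gT_n \triangle T_n| = \sum_{j=1}^d \left| k_j h_{\sigma^{-1}(j)} F_n \triangle k_j F_n \right| = \sum_{j=1}^d \left| h_{\sigma^{-1}(j)} F_n \triangle F_n \right|,
\end{equation}
using that left translation by $k_j$ is a bijection of $G$ preserving cardinality and symmetric difference. Each term $|h F_n \triangle F_n|$ with $h \in H$ is $o(|F_n|)$ by the Følner property of $\{F_n\}$ for $H$, and there are only $d$ terms, so the whole sum is $o(|F_n|) = o(|T_n|)$, which is exactly what is needed.

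I do not anticipate a genuine obstacle here — this is a routine bookkeeping argument. The one point requiring a little care is keeping the indexing straight: one must observe that the elements $h_i \in H$ appearing in the decomposition depend on $g$ but are fixed finitely many elements of $H$ (one per coset), so that applying the $H$-Følner hypothesis to each of them is legitimate, and that the permutation $\sigma$ only reshuffles the finitely many summands without affecting the conclusion. A secondary subtlety is the disjointness claim for $gT_n$: it follows simply because $\{gk_1H, \dots, gk_dH\}$ is again the full set of left cosets of $H$, hence the pieces $gk_iF_n$ land in distinct cosets and cannot overlap.
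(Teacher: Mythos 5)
Your proof is correct and follows essentially the same route as the paper: decompose $gK$ as $\{k_{\sigma(i)}h_i\}$ using that $g$ permutes the cosets, reduce $|gT_n\triangle T_n|$ to the quantities $|h_iF_n\triangle F_n|$, and invoke the F{\o}lner property of $\{F_n\}$ in $H$ for the finitely many elements $h_i$. The only cosmetic difference is that you exploit coset-disjointness to get an exact equality $|gT_n \triangle T_n| = \sum_j |h_{\sigma^{-1}(j)}F_n \triangle F_n|$, whereas the paper settles for the subadditivity bound $\left|\bigcup_i A_i \triangle \bigcup_i B_i\right| \leq \left|\bigcup_i (A_i\triangle B_i)\right|$, which suffices equally well.
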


\begin{proof}
Given $g \in G$, it can be checked that $gK = \{k_1h_{g,1},\dots,k_dh_{g,d}\}$ for $h_{g,i} \in H$ and $1 \leq i \leq d$. Therefore,
\begin{equation}
\left|gKF_n \triangle KF_n\right| = \left|\left(\bigcup_{i=1}^d k_ih_{g,i}F_n\right) \triangle \left(\bigcup_{i=1}^d k_iF_n\right)\right| \leq \left|\bigcup_{i=1}^d \left(k_ih_{g,i}F_n \triangle k_iF_n\right)\right|.
\end{equation}

To finish, since $|KF_n| = d|F_n|$,
\begin{equation}
\frac{|gKF_n \triangle KF_n|}{|KF_n|} \leq (d|F_n|)^{-1}\left|\bigcup_{i=1}^d k_i (h_{g,i}F_n \triangle F_n)\right| \leq \max_{1 \leq i \leq d} \frac{|h_{g,i}F_n \triangle F_n|}{|F_n|} \xrightarrow[n \to \infty]{} 0,
\end{equation}
because $\{F_n\}$ is F{\o}lner for $H$ and $\{h_{g,i}\}$ is independent of $n$.
\end{proof}

\begin{lemma}
\label{lemQuot}
If $\{T_n\}$ is F{\o}lner for $G$ and $\{T'_n\}$ is such that $T'_n \subseteq T_n$ and $\lim_n |T'_n|/|T_n| = 1$, then $\{T'_n\}$ is F{\o}lner for $G$.
\end{lemma}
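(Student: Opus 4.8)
The plan is to control the F\o lner defect of $\{T'_n\}$ by that of $\{T_n\}$, paying for the discrepancy between the two sequences through their symmetric difference. Fix $g \in G$. The basic observation is that for any finite sets $A, B \subseteq G$, left translation by $g$ is a bijection of $G$, so $|gA \triangle gB| = |g(A \triangle B)| = |A \triangle B|$; combined with the ``triangle inclusion''
\[
gA \triangle A \;\subseteq\; (gA \triangle gB)\cup(gB \triangle B)\cup(B \triangle A),
\]
this yields
\[
|gA \triangle A| \;\le\; |gB \triangle B| + 2\,|A \triangle B|.
\]
Applying this with $A = T'_n$ and $B = T_n$ gives
\[
|g T'_n \triangle T'_n| \;\le\; |g T_n \triangle T_n| + 2\,|T_n \triangle T'_n|.
\]

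First I would invoke that $\{T_n\}$ is F\o lner for $G$ to get $|gT_n \triangle T_n| = o(|T_n|)$. Then, since $|T'_n|/|T_n| \to 1$, the quantities $|T_n|$ and $|T'_n|$ are interchangeable inside little-$o$ statements, so it remains only to show $|T_n \triangle T'_n| = o(|T_n|)$. In the regime in which this lemma is actually invoked --- the enlargements $\hat T_n \supseteq T_n$ with $|\hat T_n|/|T_n| \to 1$ arising from condition (D) --- the two sequences are nested, $T_n \subseteq T'_n$ (or $T'_n \subseteq T_n$), and hence
\[
|T_n \triangle T'_n| = \big||T'_n| - |T_n|\big| = |T_n|\cdot\big||T'_n|/|T_n| - 1\big| = o(|T_n|),
\]
using precisely the hypothesis that the ratio tends to $1$. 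Combining the two estimates gives $|gT'_n \triangle T'_n| = o(|T_n|) = o(|T'_n|)$ for every $g \in G$, which is exactly the F\o lner condition for $\{T'_n\}$.

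The step I expect to be the real crux --- and the one requiring care --- is the bound $|T_n \triangle T'_n| = o(|T_n|)$. Asymptotic equality of cardinalities alone does \emph{not} force the symmetric difference to be small (two equinumerous finite sets can be almost disjoint, and a sequence of such sets need not be F\o lner even when a genuine F\o lner sequence of the same sizes exists), so one needs some comparability of $T_n$ and $T'_n$ beyond $|T'_n|/|T_n| \to 1$; the containment present in the intended applications supplies exactly this. Everything else is the routine symmetric-difference bookkeeping recorded above, together with the already-noted harmlessness of swapping $|T_n|$ for $|T'_n|$ inside $o(\cdot)$ once the ratio converges to $1$.
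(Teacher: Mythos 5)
Your argument follows the same route as the paper's: both reduce to the symmetric-difference triangle inequality $|gT'_n \triangle T'_n| \leq |gT_n \triangle T_n| + 2|T_n \triangle T'_n|$ (the paper writes this as a three-term bound and then uses $|g(T'_n \triangle T_n)| = |T'_n \triangle T_n|$), divide by $|T'_n| \sim |T_n|$, and are left with showing $|T_n \triangle T'_n| = o(|T_n|)$. The divergence is exactly at the step you single out as the crux. The paper disposes of it with the sentence ``If $\lim_n |T'_n|/|T_n| = 1$, it can be checked that $\lim_n |T'_n \triangle T_n|/|T_n| = 0$,'' and your skepticism is justified: this implication is false, and hence so is the lemma as literally stated. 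In $G = \Z$ take $T_n = \{1,\dots,2n\}$ and $T'_n = \{2,4,\dots,4n\}$; then $|T'_n|/|T_n| = 1$ and $|T_n \triangle T'_n| = |T_n|$, and indeed $\{T'_n\}$ is not F{\o}lner since $(1+T'_n) \cap T'_n = \emptyset$. So the paper's own proof has a genuine gap at precisely the point you identified, not your proposal. Your repair --- assuming the sequences are nested, so that $|T_n \triangle T'_n| = \bigl||T'_n| - |T_n|\bigr| = o(|T_n|)$ --- is correct and covers every invocation of the lemma in the paper: in Lemma~\ref{fol2} it is applied to $T'_n = KF_n \subseteq T_n$, and the condition-(D) enlargements satisfy $T_n \subseteq \hat{T}_n$. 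The honest fix is to restate the lemma with the hypothesis $|T_n \triangle T'_n| = o(|T_n|)$ (or with an explicit containment plus the ratio condition), which is what your proof actually establishes and what the paper actually needs.
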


\begin{proof}
If $\lim_n |T'_n|/|T_n| = 1$, it can be checked that $\lim_n \frac{|T'_n \triangle T_n|}{|T_n|} = 0$. Then, given $g \in G$,
\begin{align}
\frac{|gT'_n \triangle T'_n|}{|T'_n|}	&	\leq	\frac{|gT'_n \triangle gT_n|}{|T'_n|} + \frac{|gT_n \triangle T_n|}{|T'_n|} + \frac{|T_n \triangle T'_n|}{|T'_n|}	\\
							&	=	\frac{|g(T'_n \triangle T_n)|}{|T_n|}\frac{|T_n|}{|T'_n|} + \frac{|gT_n \triangle T_n|}{|T_n|}\frac{|T_n|}{|T'_n|} + \frac{|T_n \triangle T'_n|}{|T_n|}\frac{|T_n|}{|T'_n|}		\xrightarrow[n \to \infty]{} 0,
\end{align}
and we conclude.
\end{proof}

\begin{lemma}
\label{fol2}
Given $\{T_n\}$ F{\o}lner for $G$, define $F_n := \{h \in H: Kh \subseteq T_n\}$. Then, $KF_n \subseteq T_n$, $\lim_n |KF_n|/|T_n| = 1$ (in particular, $\{KF_n\}$ is F{\o}lner for $G$), and $\{F_n\}$ is F{\o}lner for $H$. Moreover, if $(\{T_n\},X)$ satisfies condition (D), then $(\{KF_n\},X)$ satisfies condition (D), too.
\end{lemma}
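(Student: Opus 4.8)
The plan is to treat the four assertions in turn, the first and last being essentially bookkeeping while the middle pair carries the real content.

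That $KF_n \subseteq T_n$ is immediate: if $h \in F_n$ then $Kh \subseteq T_n$ by definition, so $KF_n = \bigcup_{h \in F_n} Kh \subseteq T_n$. For the density statement $|KF_n|/|T_n| \to 1$, first note that, because $G = \bigsqcup_i k_i H$ is a disjoint union of cosets and $h \mapsto k_i h$ is injective, the sets $k_i F_n$ are pairwise disjoint and $|KF_n| = d|F_n|$. Each $g \in T_n$ has a unique expression $g = k_i h$ with $h \in H$, and such a $g$ fails to lie in $KF_n$ exactly when $Kh \not\subseteq T_n$, i.e. when the coset $Kh$ meets $T_n$ only partially. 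The core estimate is to bound the number of such ``boundary'' $h$: if $k_i h \in T_n$ but $k_j h \notin T_n$, then writing $k_j h = (k_j k_i^{-1})(k_i h)$ shows $k_i h \in T_n \setminus (k_i k_j^{-1}) T_n \subseteq T_n \triangle (k_i k_j^{-1})T_n$; since $h \mapsto k_i h$ is injective and there are only $d(d-1)$ ordered pairs $i \neq j$, the left F\o lner property applied to the finitely many elements $k_i k_j^{-1}$ gives that the total number of boundary $h$ is $o(|T_n|)$. Hence $|T_n| - |KF_n| \le d\,(\text{number of boundary } h) = o(|T_n|)$, so $|KF_n|/|T_n| \to 1$, and Lemma~\ref{lemQuot} then yields that $\{KF_n\}$ is F\o lner for $G$.

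The heart of the argument is showing $\{F_n\}$ is F\o lner for $H$, and here I would exploit that $\{KF_n\}$ is \emph{already} F\o lner for $G$. Choosing the transversal so that $k_1 = e$ (the coset $H$ admits $e$ as representative, so this is no loss), fix $h_0 \in H$ and write $h_0 k_i = k_{\tau(i)} c_i$ with $\tau$ a permutation of $\{1,\dots,d\}$ and $c_i \in H$; note that $k_1 = e$ forces $\tau(1) = 1$ and $c_1 = h_0$. Decomposing both $KF_n$ and $h_0 \cdot KF_n$ coset by coset, the contribution of the coset $k_j H$ to the symmetric difference is $k_j(F_n \triangle c_{\tau^{-1}(j)}F_n)$, so that
\[
|h_0 \cdot KF_n \triangle KF_n| = \sum_{i=1}^d |F_n \triangle c_i F_n|.
\]
Since $\{KF_n\}$ is F\o lner for $G$, the left side is $o(|KF_n|) = o(|F_n|)$; as every summand on the right is nonnegative, each is individually $o(|F_n|)$, and in particular the $i=1$ term gives $|h_0 F_n \triangle F_n| = o(|F_n|)$. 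As $h_0 \in H$ was arbitrary and $|F_n| \to \infty$, $\{F_n\}$ is F\o lner for $H$.

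Finally, condition (D) transfers for free: letting $\hat{T}_n$ be the sets witnessing (D) for $(\{T_n\},X)$, I would take $\widehat{KF_n} := \hat{T}_n$. Then $\widehat{KF_n} \supseteq T_n \supseteq KF_n$; the ratio $|\widehat{KF_n}|/|KF_n| = (|\hat{T}_n|/|T_n|)(|T_n|/|KF_n|) \to 1$ by the density just proved; and for any $x,y \in X$ the point $z$ supplied by (D) for $T_n$ already satisfies $z_{KF_n} = x_{KF_n}$ (as $KF_n \subseteq T_n$) and $z_{\widehat{KF_n}^{\mathrm{c}}} = z_{\hat{T}_n^{\mathrm{c}}} = y_{\hat{T}_n^{\mathrm{c}}} = y_{\widehat{KF_n}^{\mathrm{c}}}$. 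I expect the only genuine obstacle to be the F\o lner-for-$H$ claim: one must convert the $G$-F\o lner property of $\{KF_n\}$ into an $H$-F\o lner property, and the clean device is the coset decomposition above together with the normalization $k_1 = e$, which is exactly what isolates the translate $h_0 F_n$ from the spurious translates $c_i F_n$ produced by the other cosets. The density estimate, while elementary, is where the left F\o lner hypothesis is actually consumed.
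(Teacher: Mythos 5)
Your proposal is correct and follows essentially the same route as the paper: bound $|T_n \setminus KF_n|$ by counting the ``boundary'' elements of $H$ via the left F{\o}lner property applied to the finitely many elements $k_ik_j^{-1}$, invoke Lemma~\ref{lemQuot} to get that $\{KF_n\}$ is F{\o}lner for $G$, transfer that property down to $\{F_n\}$ through the coset decomposition, and reuse the sets $\hat T_n$ verbatim for condition (D). The only substantive difference is in the transfer step: the paper uses the identity $|hF_n\setminus F_n| = |K|^{-1}|KhF_n \setminus KF_n|$ together with the inclusion $KhF_n \subseteq (Khk_1^{-1})KF_n$, whereas you use the exact coset-by-coset identity $|h_0\cdot KF_n \triangle KF_n| = \sum_i |c_iF_n \triangle F_n|$, which is arguably cleaner. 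One caveat: your normalization $k_1 = e$ is not literally ``no loss,'' since $F_n = \{h \in H : Kh \subseteq T_n\}$ depends on the transversal $K$, which is fixed in advance, and replacing $K$ changes $F_n$. The repair is immediate, though: the representative of the coset $H$ is some $k_1 \in H$, so $\tau(1)=1$ and $c_1 = k_1^{-1}h_0k_1$; since conjugation by $k_1$ is a bijection of $H$, letting $h_0$ range over $H$ still yields $|hF_n \triangle F_n| = o(|F_n|)$ for every $h \in H$.
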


\begin{proof}
Let $F_n = \{h \in H: Kh \subseteq T_n\}$ and $Q_n = \{h \in H: Kh \cap T_n \neq \emptyset\}$. Clearly, $KF_n \subseteq T_n \subseteq KQ_n$. Notice that
\begin{equation}
Q_n \setminus F_n = \bigcup_{1 \leq i,j \leq d} \{h \in H: h \in k_i^{-1}T_n, h \notin k_j^{-1}T_n\} = \bigcup_{1 \leq i,j \leq d} (k_i^{-1}T_n \setminus k_j^{-1}T_n) \cap H.			
\end{equation}

As a consequence,
\begin{equation}
T_n \setminus KF_n \subseteq KQ_n \setminus KF_n = K(Q_n \setminus F_n) = \bigcup_{1 \leq i,j \leq d} K(k_i^{-1}T_n \setminus k_j^{-1}T_n).
\end{equation}

Therefore,
\begin{equation}
|T_n \setminus KF_n| \leq \sum_{1 \leq i,j \leq d} |K(k_i^{-1}T_n \setminus k_j^{-1}T_n)| \leq |K|\sum_{1 \leq i,j \leq d} |k_jk_i^{-1}T_n \triangle T_n|,
\end{equation}
so
\begin{align}
\frac{|T_n \setminus KF_n|}{|T_n|} \leq |K|\sum_{1 \leq i,j \leq d} \frac{|k_jk_i^{-1}T_n \triangle T_n|}{|T_n|} \leq d^3 \max_{1 \leq i,j \leq d} \frac{|k_jk_i^{-1}T_n \triangle T_n|}{|T_n|} \xrightarrow[n \to \infty]{} 0,
\end{align}
and $\{KF_n\}$ is F{\o}lner for $G$ due to Lemma \ref{lemQuot}. Now, given $h \in H$, we can see that
\begin{equation}
|hF_n \setminus F_n| = |K|^{-1} |KhF_n \setminus KF_n| \leq |K|^{-1} |Khk_1^{-1}KF_n \setminus KF_n| \leq \max_{g \in Khk_1^{-1}} |gKF_n \setminus KF_n|,
\end{equation}
so
\begin{equation}
\frac{|hF_n \triangle F_n|}{|F_n|} = \frac{|hF_n \setminus F_n|}{|F_n|} + \frac{|h^{-1}F_n \setminus F_n|}{|F_n|} \leq 2\max_{g \in K\{h,h^{-1}\}k_1^{-1}} \frac{|gKF_n \triangle KF_n|}{|KF_n|} \xrightarrow[n \to \infty]{} 0,
\end{equation}
and we conclude that $\{F_n\}$ is F{\o}lner for $H$.

Finally, if $(\{T_n\},X)$ satisfies condition (D), then there exists $\{\hat{T}_n\}$ such that $T_n \subseteq \hat{T}_n$, $\lim_n \frac{|\hat{T}_n|}{|T_n|} = 1$ and for all $x,y \in X$, there exists $z \in X$ such that $z_{T_n} = x_{T_n}$ and $z_{\hat{T}^{\rm c}_n} = y_{\hat{T}^{\rm c}_n}$. Therefore, we can use the same sequence $\{\hat{T}_n\}$ to prove that $(\{KF_n\},X)$ satisfies condition (D), since $KF_n \subseteq T_n$, so $z_{T_n} = x_{T_n}$ implies that $z_{KF_n} = x_{KF_n}$ and  $\frac{|\hat{T}_n|}{|KF_n|} = \frac{|\hat{T}_n|}{|T_n|}\frac{|T_n|}{|KF_n|} \xrightarrow[n \to \infty]{} 1$.
\end{proof}

Thus, in virtue of Lemma \ref{fol1} and Lemma \ref{fol2}, from now on and w.l.o.g., we will assume that we have a sequence $\{T_n\}$ F{\o}lner for $G$ such that $T_n = KF_n$, where $\{F_n\}$ is some F{\o}lner sequence for $H$.

\subsection{Information and entropy}

A {\bf (finite) measurable partition} is a finite collection of disjoint nonempty sets $\{A_1,\dots,A_k\}$ such that $A_i \in \B_G$ and $\bigcup_{i=1}^k A_i = X$ for $k \in \N$. We will only consider the canonical partition of the origin $\alpha = \{[x_e]: x \in X\}$ and its \emph{common refinements} $\alpha^M = \bigvee_{g \in M} g^{-1} \cdot \alpha$, where $M \in \F(G)$ and $g^{-1} \cdot \alpha = \{[x_g]: x \in X\}$. Notice that $\alpha^M = \{[x_M]: x \in X\}$ (after discarding the empty sets) and that $\alpha$ is a \emph{generating partition}, i.e., $\alpha^G = \B_G$; here, especially if $T \subseteq G$ is infinite, we will understand that $\alpha^T$ refers to the smallest $\sigma$-algebra that contains $\sigma(g^{-1} \cdot \alpha)$ for all $g \in T$, i.e., $\B_T$, and there won't be any ambiguity.

Given $\nu \in \invMeasG$, $M \in \F(G)$, and a sub-$\sigma$-algebra $\C$ of $\B_G$, define $\nu(x)$-a.e. the {\bf information function} of $\alpha^M$ conditioned on $\C$ as
\begin{equation}
I_\nu(\alpha^M \vert \C)(x) := -\sum_{A \in \alpha^M} 1_A(x) \log \nu(A \vert \C)(x) = -\log \nu([x_M] \vert \C)(x).
\end{equation}

The {\bf Shannon entropy} of $\alpha^M$ conditioned on $\C$ is defined as
\begin{equation}
H_\nu(\alpha^M \vert \C) := \int{I_\nu(\alpha^M \vert \C)}d\nu.
\end{equation}

We write $I_\nu(\alpha^M)$ and $H_\nu(\alpha^M)$ if $\C$ is the \emph{trivial} sub-$\sigma$-algebra $\{\emptyset,X\}$. Given $T \subseteq G$, we abbreviate $I_\nu(\alpha^M \vert \alpha^T)(x)$ by $I_\nu(M \vert T)(x)$, which is a $\B_{M \cup T}$-measurable function. Notice that if $T \in \F(G)$ and $x \in \supp(\nu,M \cup T)$, we can write $I_\nu(M \vert T)(x) = -\log \nu([x_M] \vert [x_T])$. Given a F{\o}lner sequence $\{T_n\}$, define the {\bf Kolmogorov-Sinai entropy} as the limit
\begin{equation}
h(\nu) := \lim_n |T_n|^{-1} H_\nu(\alpha^{T_n}) = \inf_n |T_n|^{-1} H_\nu(\alpha^{T_n}),
\end{equation}
which turns out to be independent of $\{T_n\}$. The following theorem is a particular case of the \emph{Pinsker Formula} that appears in \cite{1-huang}.

\begin{theorem}[{\cite[Theorem 3.1]{1-huang}}]
\label{pinsker}
Let $G$ be a countable discrete amenable orderable group with algebraic past $G^-$, $X$ a $G$-subshift, and $\nu \in \invMeasG$. Then
\begin{equation}
h(\nu) = H_\nu(\alpha \vert \alpha^{G^-}) = \int{I_\nu(e \vert G^-)}d\nu.
\end{equation}
\end{theorem}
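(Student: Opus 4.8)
The plan is to evaluate the limit defining $h(\nu)$ directly along a single left Følner sequence $\{T_n\}$ for $G$ — this is legitimate since $h(\nu)$ does not depend on the sequence — and to identify the per-site entropy cost with $H_\nu(\alpha\mid\alpha^{G^-})$. Morally this is the amenable, non-abelian analogue of the classical $\Z$-dynamics identity $h(\nu,\alpha)=H_\nu(\alpha\mid\alpha^{\{-1,-2,\dots\}})$, and two ingredients carry it over: a chain-rule decomposition of $H_\nu(\alpha^{T_n})$ read off the order $\leqslant$, and a Følner counting argument. Both inequalities $h(\nu)\ge H_\nu(\alpha\mid\alpha^{G^-})$ and $h(\nu)\le H_\nu(\alpha\mid\alpha^{G^-})$ will be proved separately.

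First I would decompose along the order. For each $n$, enumerate $T_n=\{g_1<g_2<\dots<g_{|T_n|}\}$ increasingly with respect to $\leqslant$ and use the chain rule for conditional Shannon entropy:
\[
H_\nu(\alpha^{T_n})=\sum_{i=1}^{|T_n|} H_\nu\bigl(\alpha^{\{g_i\}}\,\bigm|\,\alpha^{\{g_1,\dots,g_{i-1}\}}\bigr).
\]
Applying the $\nu$-preserving map $x\mapsto g_i\cdot x$ inside the $i$-th summand and using $g\cdot\alpha^T=\alpha^{Tg^{-1}}$ (which follows from the identity $g\cdot[x_T]=[(g\cdot x)_{Tg^{-1}}]$ recorded above, since $X$ is $G$-invariant), the $i$-th term equals $H_\nu(\alpha\mid\alpha^{S_i})$ with $S_i:=\{g_jg_i^{-1}:j<i\}=(T_ng_i^{-1})\cap G^-$; here the point is that $g_j<g_i$ forces $g_jg_i^{-1}<e$ precisely because $\leqslant$ is \emph{right}-invariant, so $S_i\subseteq G^-$. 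Since $\alpha^{S_i}\subseteq\alpha^{G^-}$ and refining the conditioning $\sigma$-algebra cannot increase conditional entropy, $H_\nu(\alpha\mid\alpha^{S_i})\ge H_\nu(\alpha\mid\alpha^{G^-})$ for every $i$; summing and dividing by $|T_n|$ already gives $h(\nu)\ge H_\nu(\alpha\mid\alpha^{G^-})$.

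For the reverse inequality the Følner property enters. Fix $\varepsilon>0$. By continuity of conditional entropy along an increasing sequence of $\sigma$-algebras — a consequence of the increasing martingale convergence theorem applied to $\nu([x_e]\mid\,\cdot\,)$ together with monotonicity of conditional entropy — choose a finite $W\subseteq G^-$ with $H_\nu(\alpha\mid\alpha^W)<H_\nu(\alpha\mid\alpha^{G^-})+\varepsilon$. Call $g_i\in T_n$ \emph{good} if $Wg_i\subseteq T_n$, equivalently $W\subseteq S_i$; then $H_\nu(\alpha\mid\alpha^{S_i})\le H_\nu(\alpha\mid\alpha^W)<H_\nu(\alpha\mid\alpha^{G^-})+\varepsilon$ for good $i$, while for the remaining $i$ one bounds the summand crudely by $H_\nu(\alpha)\le\log|S|$. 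The number of non-good $i$ is at most $\sum_{w\in W}|T_n\setminus w^{-1}T_n|\le\sum_{w\in W}|w^{-1}T_n\triangle T_n|=o(|T_n|)$, using that $\{T_n\}$ is left Følner and $|W|$ is fixed. Substituting into the chain-rule sum, dividing by $|T_n|$ and letting $n\to\infty$ gives $h(\nu)\le H_\nu(\alpha\mid\alpha^{G^-})+\varepsilon$, hence equality after $\varepsilon\downarrow 0$. The final equality $H_\nu(\alpha\mid\alpha^{G^-})=\int I_\nu(e\mid G^-)\,d\nu$ is immediate from the definitions of conditional Shannon entropy and of the information function $I_\nu(e\mid G^-)=I_\nu(\alpha^{\{e\}}\mid\alpha^{G^-})$.

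I expect the main obstacle to be essentially bookkeeping: making the conventions line up so that \emph{right}-invariance of the order is exactly what places each shifted initial segment $S_i$ inside the algebraic past $G^-$, while the \emph{left} Følner condition is exactly what forces all but $o(|T_n|)$ sites $g_i$ to be good — and checking that the entropy monotonicity inequalities point in the claimed direction under the action convention $(g\cdot x)(h)=x(hg)$. The one genuinely analytic input, which I would cite rather than reprove, is the continuity statement $H_\nu(\alpha\mid\alpha^{W_k})\downarrow H_\nu(\alpha\mid\alpha^{G^-})$ whenever $W_k\uparrow G^-$ through finite subsets.
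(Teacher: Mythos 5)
Your argument is correct, but it is worth noting that the paper does not prove this statement at all: it is imported verbatim as a special case of the Pinsker formula from the cited reference, so any comparison is between your self-contained proof and an external black box. What you give is the classical chain-rule argument transported to the amenable ordered setting, and every step checks out under the paper's conventions: the increasing enumeration of $T_n$ and the identity $g\cdot[x_T]=[(g\cdot x)_{Tg^{-1}}]$ turn the $i$-th chain-rule term into $H_\nu(\alpha\mid\alpha^{S_i})$ with $S_i=(T_ng_i^{-1})\cap G^-$, and it is indeed the \emph{right}-invariance of $\leqslant$ (the paper's standing convention) that places $S_i$ inside $G^-$, giving the lower bound by monotonicity; for the upper bound, your count of non-good sites via $\sum_{w\in W}|T_n\setminus w^{-1}T_n|=o(|T_n|)$ is exactly the left-F{\o}lner estimate the paper itself uses in Lemma~\ref{lemBR}, and the approximation $H_\nu(\alpha\mid\alpha^W)\downarrow H_\nu(\alpha\mid\alpha^{G^-})$ for finite $W\uparrow G^-$ follows from martingale convergence plus bounded convergence applied to $-t\log t$ (a weaker statement than the Krengel-type Lemma~\ref{kreng} the paper invokes later, so this ingredient is certainly available in the paper's toolkit). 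The final identity $H_\nu(\alpha\mid\alpha^{G^-})=\int I_\nu(e\mid G^-)\,d\nu$ is definitional, as you say. What your route buys is transparency and self-containment at the cost of proving only the special case needed here (a single generating partition $\alpha$ on a subshift), whereas the cited Pinsker formula is a more general statement about arbitrary partitions and factor $\sigma$-algebras; your decomposition is also precisely the $\ell=1$, $H=G$, $K=\{e\}$ instance of the sequential decomposition in Section~\ref{sec4}, so it integrates naturally with the rest of the paper.
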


\subsection{Pointwise and mean theorems}

Recall that $ \invAlg_H$ corresponds to the $\sigma$-algebra of $H$-invariant sets. Now we state the ergodic and SMB theorems in their pointwise and mean versions for the particular case of $G$-subshifts.

\begin{theorem}[{\cite{1-lindenstrauss}}]
\label{ergodicThm}
Suppose that $\nu \in \invMeasH$ and $\{F_n\}$ is a $F{\o}lner$ sequence for $H$. Then, for any $f \in L^1_\nu$,
\begin{equation}  
|F_n|^{-1} \sum_{h \in F_n} f(h \cdot x) \xrightarrow[n \to \infty]{L^1_\nu} \E_\nu[f \vert \invAlg_H](x),
\end{equation}
and the same holds $\nu(x)$-a.e. if $\{F_n\}$ is tempered. In particular, if $\nu \in \ergMeasH$, then the limit is constant $\nu(x)$-a.e. and equal to $\int{f}d\nu$.
\end{theorem}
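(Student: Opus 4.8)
The plan is to obtain \thmref{ergodicThm} by specializing the general ergodic theorems for actions of countable amenable groups to the measure-preserving system $(X,\B_G,\nu)$ carrying the restricted translation action $H \acts X$. Since $\nu \in \invMeasH$, each map $x \mapsto h\cdot x$ preserves $\nu$, so the Koopman operators $(U_h f)(x) := f(h\cdot x)$ are isometries of every $L^p_\nu$; consequently the averaging operators $(A_n f)(x) := |F_n|^{-1}\sum_{h \in F_n} f(h\cdot x)$, being convex combinations of the $U_h$, are contractions on each $L^p_\nu$, and so is the limit operator $\E_\nu[\,\cdot \mid \invAlg_H]$. Note also that, $H$ being countable, the closed subspace of $L^2_\nu$ of functions fixed by every $U_h$ coincides with $L^2(X,\invAlg_H,\nu)$, and the orthogonal projection onto it is exactly $\E_\nu[\,\cdot \mid \invAlg_H]$.

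For the mean statement, I would first prove $L^1_\nu$-convergence on the dense subspace $L^2_\nu$. The von Neumann--type mean ergodic theorem for amenable groups applies along an \emph{arbitrary} Følner sequence (temperedness is not needed here): using the Følner property one shows $\|A_n g - A_n U_h g\|_{L^2_\nu} \to 0$ for each $h \in H$, from which $A_n$ converges in the strong operator topology on $L^2_\nu$ to the projection $\E_\nu[\,\cdot \mid \invAlg_H]$. To pass to all of $L^1_\nu$, a routine $3\varepsilon$ argument suffices: given $f \in L^1_\nu$ and $\varepsilon > 0$, choose $g \in L^2_\nu$ with $\|f-g\|_\nu < \varepsilon$ and estimate $\|A_n f - \E_\nu[f\mid\invAlg_H]\|_\nu \le \|A_n(f-g)\|_\nu + \|A_n g - \E_\nu[g\mid\invAlg_H]\|_\nu + \|\E_\nu[(g-f)\mid\invAlg_H]\|_\nu < 2\varepsilon + \|A_n g - \E_\nu[g\mid\invAlg_H]\|_{L^2_\nu}$, where the last term tends to $0$ by the $L^2$ result and Cauchy--Schwarz on the probability space $(X,\B_G,\nu)$.

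For the pointwise statement the essential input is Lindenstrauss's pointwise ergodic theorem \cite{1-lindenstrauss}: along a tempered Følner sequence, $A_n f \to \E_\nu[f\mid\invAlg_H]$ holds $\nu$-a.e. for every $f \in L^1_\nu$. Since $X$ is compact metrizable, hence $(X,\B_G,\nu)$ is a standard probability space, and the $H$-action is measurable and measure-preserving, that theorem applies here verbatim. This is the step I expect to be the genuine obstacle: it relies on a Vitali-type covering lemma for tempered sequences together with a transference/maximal-inequality argument, and the temperedness hypothesis is not removable for a.e.\ convergence, unlike for the mean statement. (One always gets a.e.\ convergence along \emph{some} Følner subsequence, since every Følner sequence for $H$ has a tempered Følner subsequence, but that is weaker than what is asserted.)

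Finally, the ergodic case: if $\nu \in \ergMeasH$ then $\invAlg_H$ is $\nu$-trivial, so $\E_\nu[f\mid\invAlg_H]$ equals a constant $c$ $\nu$-a.e.; integrating and using that conditional expectation preserves the integral yields $c = \int \E_\nu[f\mid\invAlg_H]\,d\nu = \int f\,d\nu$, which gives the last assertion in both the $L^1_\nu$ and (when $\{F_n\}$ is tempered) the $\nu$-a.e.\ sense.
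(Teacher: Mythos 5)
Your proposal is correct and matches the paper's approach: the paper gives no proof of its own but simply cites Lindenstrauss, and your argument likewise defers the only genuinely hard step (pointwise convergence along tempered F{\o}lner sequences) to that same reference, while the mean statement via the $L^2$ projection plus a $3\varepsilon$ density argument and the ergodic specialization via triviality of $\invAlg_H$ are the standard routine completions. Nothing is missing.
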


\begin{theorem}[{\cite{1-huang,1-lindenstrauss}}]
Suppose that $\nu \in \invMeasG$ and $\{T_n\}$ is a F{\o}lner sequence for $G$. Then
\begin{equation}
-|T_n|^{-1} \log\nu([x_{T_n}]) \xrightarrow[n \to \infty]{L^1_\nu} \E_\nu[I_{\nu}(e \vert G^-) \vert \invAlg_G](x),
\end{equation}
and the same holds $\nu(x)$-a.e. if $\{T_n\}$ is tempered. In particular, if $\nu \in \ergMeasG$, then the limit is constant $\nu(x)$-a.e. and equal to $h(\nu)$ (see Theorem \ref{pinsker}).
\end{theorem}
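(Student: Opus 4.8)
The plan is to reduce the statement to the mean and pointwise ergodic theorems for $G$ (Theorem~\ref{ergodicThm}, applied with $G$ in place of $H$) by using the right-invariance of the order on $G$ to unfold the information function $-\log\nu([x_{T_n}]) = I_\nu(\alpha^{T_n})(x)$ into a Birkhoff-type sum over $T_n$. Fix $n$ and enumerate $T_n = \{g_1 < \cdots < g_{|T_n|}\}$ in increasing order with respect to $\leqslant$. The chain rule for conditional information gives
\begin{equation}
I_\nu(\alpha^{T_n})(x) = \sum_{i=1}^{|T_n|} I_\nu\!\left(g_i^{-1} \cdot \alpha \,\vert\, \alpha^{\{g_1,\dots,g_{i-1}\}}\right)(x).
\end{equation}
Since $\leqslant$ is right-invariant, $g_j < g_i \iff g_jg_i^{-1} \in G^-$, hence $\{g_1,\dots,g_{i-1}\} = T_n \cap G^-g_i$; using the $G$-invariance of $\nu$ and of $\alpha$ and translating by $g_i$ (recall $g \cdot [x_T] = [(g \cdot x)_{Tg^{-1}}]$), the $i$-th summand equals $I_\nu\!\left(e \,\vert\, (T_ng_i^{-1}) \cap G^-\right)(g_i \cdot x)$. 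Therefore
\begin{equation}
-\log\nu([x_{T_n}]) \;=\; \sum_{g \in T_n} I_\nu\!\left(e \,\vert\, (T_ng^{-1}) \cap G^-\right)(g \cdot x).
\end{equation}

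Next, since $H_\nu(\alpha \vert \alpha^{G^-}) \le H_\nu(\alpha) \le \log|S| < \infty$, the nonnegative function $f := I_\nu(e \vert G^-)$ lies in $L^1_\nu$, so Theorem~\ref{ergodicThm} yields $|T_n|^{-1}\sum_{g \in T_n} f(g \cdot x) \to \E_\nu[f \,\vert\, \invAlg_G](x)$ in $L^1_\nu$, and $\nu(x)$-a.e.\ when $\{T_n\}$ is tempered. In view of the previous display it therefore suffices to prove that the error term
\begin{equation}
R_n(x) \;:=\; \frac{1}{|T_n|}\sum_{g \in T_n}\left[\,I_\nu\!\left(e \,\vert\, (T_ng^{-1}) \cap G^-\right)(g \cdot x) \;-\; f(g \cdot x)\,\right]
\end{equation}
tends to $0$ in $L^1_\nu$, and $\nu(x)$-a.e.\ when $\{T_n\}$ is tempered. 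Granting this, the theorem follows, and the ``in particular'' clause is immediate: if $\nu \in \ergMeasG$ then $\invAlg_G$ is trivial, so $\E_\nu[f \vert \invAlg_G] = \int f\,d\nu = H_\nu(\alpha \vert \alpha^{G^-}) = h(\nu)$ by Theorem~\ref{pinsker}.

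Proving $R_n \to 0$ is the technical core of the argument, and I expect it to be the main obstacle. The mechanism is the following. On the one hand, the F{\o}lner property gives, for every finite $V \subseteq G^-$, that $|\{g \in T_n : Vg \not\subseteq T_n\}| = o(|T_n|)$, so that the conditioning set $(T_ng^{-1}) \cap G^-$ contains $V$ for all but $o(|T_n|)$ of the $g \in T_n$. On the other hand, as $\alpha^V$ increases to $\alpha^{G^-}$ along an exhaustion of $G^-$, L\'evy's martingale theorem gives $I_\nu(e \vert V) \to f$ both in $L^1_\nu$ and $\nu(x)$-a.e. The difficulty — and this is where the real work lies — is that in $R_n$ the conditioning set $W = (T_ng^{-1}) \cap G^-$ depends on $g$, so one cannot argue term by term against a single $V$; what is needed is a uniform control of the tails, supplied by a Chung--Neveu type \textbf{maximal inequality for the information function}, which guarantees that $f^{\ast} := \sup_{W \subseteq G^-} I_\nu(e \vert W) \in L^1_\nu$. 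With $f^{\ast}$ in hand, the $o(|T_n|)$ exceptional coordinates contribute negligibly (in $L^1_\nu$ directly, and $\nu(x)$-a.e.\ through the maximal function $x \mapsto \sup_n |T_n|^{-1}\sum_{g \in T_n} f^{\ast}(g \cdot x)$ underlying Theorem~\ref{ergodicThm} in the tempered case), while the remaining coordinates are compared with the corresponding $f$-average after first letting $n \to \infty$ via Theorem~\ref{ergodicThm} and then $V \uparrow G^-$; assembling these estimates gives $R_n \to 0$. This is essentially Lindenstrauss's proof of the pointwise ergodic/SMB theorem, which specializes cleanly to our situation because the order turns the information function into a Birkhoff average; see \cite{1-lindenstrauss,1-huang}.
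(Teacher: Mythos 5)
The paper does not actually prove this theorem: it is imported from \cite{1-lindenstrauss} (for the convergence along F{\o}lner/tempered sequences) together with \cite{1-huang} (to identify the limit via the Pinsker formula), and Lindenstrauss's proof of the pointwise SMB theorem is order-free, built on covering arguments and the pointwise ergodic theorem. Your route is therefore genuinely different: you use the right-invariance of the order to unfold $-\log\nu([x_{T_n}])$ into the sum $\sum_{g \in T_n} I_\nu(e \vert T_ng^{-1} \cap G^-)(g \cdot x)$. That decomposition is correct (it is exactly Eq.~(\ref{formula}) of the paper specialized to $H = G$, $K = \{e\}$, $\ell = 1$), and the $L^1_\nu$ half of the statement can indeed be completed along your lines, with no maximal inequality needed: by Lemma~\ref{lemBR}, all but $o(|T_n|)$ of the conditioning sets contain a prescribed $V_0 \in \F(G^-)$; for those, $\|I_\nu(e \vert W) - I_\nu(e \vert G^-)\|_{\nu} < \epsilon$ by the $L^1$ martingale convergence along the directed set $\F(G^-)$ (Lemma~\ref{kreng}); and each exceptional term satisfies $\|I_\nu(e \vert W)\|_{\nu} = H_\nu(\alpha \vert \B_W) \leq H_\nu(\alpha) \leq \log|S|$, so the exceptional terms contribute $o(1)$ in norm. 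This is precisely the mechanism of Proposition~\ref{prop2} and its following remark, so the mean statement is fine.

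The gap is in the pointwise (tempered) half. You dominate the error by $f^{\ast} := \sup_{W \in \F(G^-)} I_\nu(e \vert W)$ and assert $f^{\ast} \in L^1_\nu$ by ``a Chung--Neveu type maximal inequality.'' The Chung--Neveu lemma applies to an \emph{increasing sequence} $\C_1 \subseteq \C_2 \subseteq \cdots$ of $\sigma$-algebras: its proof is a stopping-time argument that needs the total order of the filtration to make $\{\tau = n\}$ measurable with respect to $\C_n$. The family $\{\B_W : W \in \F(G^-)\}$ is only directed, and the conditioning sets $T_ng^{-1} \cap G^-$ that actually occur form a chain only when $G = \Z$ (where they are the intervals $[-g,-1]$ and Breiman's original argument applies verbatim); already for $\Z^2$ with the lexicographic order they do not. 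For a merely directed family the stopping-time argument breaks down (the event ``$V$ is the first witness'' is not $\B_V$-measurable), and maximal inequalities for directed-index martingales are known to require extra Vitali-type hypotheses. Similarly, the a.e.\ convergence $I_\nu(e \vert V) \to I_\nu(e \vert G^-)$ you quote from L\'evy's theorem is along a chain exhausting $G^-$, not along the directed set, so bridging from that chain to the sets $T_ng^{-1} \cap G^-$ is exactly the step left unproved. As written, the pointwise claim is not established; you would need either a maximal inequality valid for this directed family or the order-free argument of \cite{1-lindenstrauss}. (Your closing attribution is also off: the order-based scheme is Breiman's, not Lindenstrauss's.)
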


We will refer to $-|T_n|^{-1} \log\nu([x_{T_n}])$ as \emph{SMB ratio}. Notice that in the Gibbsian case we have already established that this ratio is uniformly bounded over $x \in X$ and $n \in \N$ (see Eq. (\ref{smbratio})). Now we will review a couple of results related to a Breiman type theorem due to Gurevich and Tempelman \cite{1-gurevich}.

\subsection{Breiman type theorem for Gibbs measures}

Given a F{\o}lner sequence $\{T_n\}$ for $G$, define the set
\begin{equation}
X_{\varphi,\{T_n\}} := \left\{x \in X: \lim_n |T_n|^{-1} \sum_{g \in T_n} \varphi(g \cdot x) \text{ exists}\right\}.
\end{equation}

\begin{theorem}[{\cite[Theorem 1]{1-gurevich}}]
\label{thm1}
If $\{T_n\}$ is F{\o}lner for $G$, $(X,\{T_n\})$ satisfies condition (D), and $\Phi$ is a potential such that $\mathcal{G}(X,\Phi) \neq \emptyset$, then
\begin{enumerate}
\item $\mathrm{P}(\Phi)$ is well defined,
\item $\mathrm{P}(\Phi) = \sup_{\nu \in \invMeasG}\left[h(\nu) + \int{\varphi}d\nu\right] = h(\mu) + \int{\varphi}d\mu$ for any $\mu \in \mathcal{G}(X,\Phi)$, and
\item for $x \in X_{\varphi,\{T_n\}}$ and $\mu \in \mathcal{G}(X,\Phi)$,
\begin{equation}
\lim_n \left[-|T_n|^{-1} \log \mu([x_{T_n}]) \right] = -\lim_n |T_n|^{-1} \sum_{g \in T_n} \varphi(g \cdot x) + \mathrm{P}(\Phi).
\end{equation}
\end{enumerate}
\end{theorem}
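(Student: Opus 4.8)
The plan is to derive all three parts from two ingredients: the uniform two-sided estimate of Lemma~\ref{bounds} (the only place where condition (D) is used) and an elementary \emph{boundary estimate} comparing the energy $E(x_{T_n})$ with the ergodic sum $\sum_{g\in T_n}\varphi(g\cdot x)$. For the latter, expanding the definition of $\varphi$ and using the $G$-invariance of $\Phi$ (reindexing $N=Mg$) one obtains, for every $x\in X$,
\[
E(x_{T_n})+\sum_{g\in T_n}\varphi(g\cdot x)=-\sum_{\substack{N\in\F(G):\,N\cap T_n\neq\emptyset\\ N\not\subseteq T_n}}|N|^{-1}|N\cap T_n|\,\Phi(N,x),
\]
so the left-hand side is bounded in absolute value, uniformly in $x$, by $B_n:=\sum_{N\in\F(G):\,N\cap T_n\neq\emptyset,\,N\not\subseteq T_n}\sup_{y\in X}|\Phi(N,y)|$. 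The key claim is $B_n=o(|T_n|)$: given $\varepsilon>0$, choose a finite $E\ni e$ with $\sum_{M\in\F_e(G):\,M\not\subseteq E}\sup_y|\Phi(M,y)|<\varepsilon$ (possible since $\|\Phi\|<\infty$); for each $N$ contributing to $B_n$ pick $g\in N\cap T_n$ (say the first one in a fixed enumeration of $G$) and write $N=Mg$ with $e\in M$. If $M\subseteq E$, then $Eg\not\subseteq T_n$ and $g\in T_n$, so $g\in T_n\setminus\bigcap_{m\in E}m^{-1}T_n$, a set of cardinality at most $\sum_{m\in E}|m^{-1}T_n\triangle T_n|=o(|T_n|)$ by the F{\o}lner property; hence these $N$ contribute at most $\|\Phi\|\cdot o(|T_n|)$. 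If $M\not\subseteq E$, then $\sup_y|\Phi(N,y)|=\sup_y|\Phi(M,y)|$ and, for each such $M$, $g$ ranges over a subset of $T_n$, so these $N$ contribute at most $|T_n|\sum_{M\not\subseteq E}\sup_y|\Phi(M,y)|<\varepsilon|T_n|$. Thus $\limsup_n B_n/|T_n|\le\varepsilon$ for every $\varepsilon>0$. In particular $|T_n|^{-1}\bigl(E(x_{T_n})+\sum_{g\in T_n}\varphi(g\cdot x)\bigr)\to0$ uniformly in $x$, and integrating against any $\nu\in\invMeasG$ (and using $G$-invariance) gives $|T_n|^{-1}\int E(x_{T_n})\,d\nu\to-\int\varphi\,d\nu$.

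For part (1) and the identity $\mathrm{P}(\Phi)=h(\mu)+\int\varphi\,d\mu$, fix $\mu\in\mathcal{G}(X,\Phi)$ (nonempty by hypothesis). Taking logarithms in (\ref{eq10}) gives $\bigl|\log\mu([x_{T_n}])+E(x_{T_n})+\log Z(T_n)\bigr|\le r_n$ for all $x$; integrating against $\mu$ and using $-\int\log\mu([x_{T_n}])\,d\mu=H_\mu(\alpha^{T_n})$, then dividing by $|T_n|$ and letting $n\to\infty$ (with $|T_n|^{-1}H_\mu(\alpha^{T_n})\to h(\mu)$ and the boundary estimate), yields $|T_n|^{-1}\log Z(T_n)\to h(\mu)+\int\varphi\,d\mu$. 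Since this limit depends neither on $\{T_n\}$ nor on the chosen $\mu$, both part (1) and the second equality of part (2) follow. For the variational formula in part (2), apply the elementary inequality $\sum_i p_i a_i-\sum_i p_i\log p_i\le\log\sum_i e^{a_i}$ (Jensen for $\log$) with $p_{x_{T_n}}=\nu([x_{T_n}])$ and $a_{x_{T_n}}=-E(x_{T_n})$, for arbitrary $\nu\in\invMeasG$, obtaining $H_\nu(\alpha^{T_n})-\int E(x_{T_n})\,d\nu\le\log Z(T_n)$; dividing by $|T_n|$ and passing to the limit gives $h(\nu)+\int\varphi\,d\nu\le\mathrm{P}(\Phi)$, and the previous step shows equality is attained at $\mu$, so $\mathrm{P}(\Phi)=\sup_{\nu\in\invMeasG}[h(\nu)+\int\varphi\,d\nu]$.

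Finally, part (3) is immediate: for $x\in X_{\varphi,\{T_n\}}$, divide $\bigl|\log\mu([x_{T_n}])+E(x_{T_n})+\log Z(T_n)\bigr|\le r_n$ by $|T_n|$; by part (1) $|T_n|^{-1}\log Z(T_n)\to\mathrm{P}(\Phi)$, by the boundary estimate and the definition of $X_{\varphi,\{T_n\}}$ we have $|T_n|^{-1}E(x_{T_n})\to-\lim_n|T_n|^{-1}\sum_{g\in T_n}\varphi(g\cdot x)$, and $r_n=o(|T_n|)$, so $-|T_n|^{-1}\log\mu([x_{T_n}])\to-\lim_n|T_n|^{-1}\sum_{g\in T_n}\varphi(g\cdot x)+\mathrm{P}(\Phi)$. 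The main obstacle is the uniform bound $B_n=o(|T_n|)$ in the boundary estimate: for each fixed shape $N$ the density $|N\cap T_n|/|N|$ converges, but one cannot interchange limit and the infinite sum over shapes without a domination that $\|\Phi\|<\infty$ alone does not supply, which is exactly why the $\varepsilon$-truncation into a ``bounded-shape'' part (controlled by the F{\o}lner property) and a ``large-shape'' part (controlled by summability) is needed. Everything else is routine manipulation of Lemma~\ref{bounds} and the Gibbs inequality; note that the hypothesis $\mathcal{G}(X,\Phi)\neq\emptyset$ is precisely what upgrades the variational inequality in part (2) to an identity.
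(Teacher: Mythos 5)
The paper itself offers no proof of this statement: Theorem~\ref{thm1} is imported verbatim from Gurevich--Tempelman, so there is no internal argument to compare against. Your reconstruction is correct and is, in substance, the Gurevich--Tempelman argument assembled from the two ingredients the paper does record. Note in particular that your ``boundary estimate'' $E(x_{T_n})+\sum_{g\in T_n}\varphi(g\cdot x)=o(|T_n|)$ uniformly in $x$ is precisely Lemma~\ref{lem2} of the paper (also cited, not proved, from the same source), and your $\varepsilon$-truncation proof of it --- splitting the shapes $N=Mg$ meeting but not contained in $T_n$ into those with $M$ inside a fixed finite $E$ (forcing $g$ into a F{\o}lner-small boundary set) and the summable tail $M\not\subseteq E$ --- is a sound and complete argument for that lemma; the injectivity of $N\mapsto(M,g)$ via a canonical choice of $g\in N\cap T_n$ is handled correctly. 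The assembly of parts (1)--(3) from Eq.~(\ref{eq10}), the boundary estimate, and the Gibbs/Jensen inequality is routine and correct. The one caveat worth recording: your argument establishes existence of $\lim_n|T_n|^{-1}\log Z(T_n)$ and its independence of the sequence only \emph{among} F{\o}lner sequences for which condition (D) holds and only because the limiting value $h(\mu)+\int\varphi\,d\mu$ is sequence-free; full well-definedness of $\mathrm{P}(\Phi)$ in the paper's sense (independence over all F{\o}lner sequences) would need an additional standard step, e.g.\ an Ornstein--Weiss almost-subadditivity argument for $T\mapsto\log Z(T)$, which neither you nor the paper supplies but which the cited source covers. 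This does not affect how the theorem is used later in the paper, where the relevant sequences always satisfy condition (D).
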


Part 2 of Theorem \ref{thm1} can be regarded as a \emph{variational principle} (see, for example, \cite{2-walters,1-misiurewicz}). Then, combining Theorem \ref{pinsker} and Theorem \ref{thm1}, we have that
\begin{equation}
\mathrm{P}(\Phi) = \int{(I_\mu(e \vert G^-) + \varphi)}d\mu
\end{equation}
for all $\mu \in \mathcal{G}(X,\Phi)$. The main goal of this paper is to give conditions in order to replace $d\mu$ by $d\nu$ in the previous expression, where $\nu$ is some other measure in $\invMeasH$ and the integrand is the same or a related functional. We also have the following result.

\begin{lemma}[{\cite[Lemma 2]{1-gurevich}}]
\label{lem2}
If $\{T_n\}$ is a F{\o}lner sequence for $G$, then
\begin{eqnarray}
\lim_n |T_n|^{-1} \left[- \sum_{g \in T_n} \varphi(g \cdot x) - E(x_{T_n})\right] = 0	&	\text{uniformly over $x \in X$.}
\end{eqnarray}
\end{lemma}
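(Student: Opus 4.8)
The plan is to expand both quantities straight from the definitions and show that their difference collapses, up to a telescoping cancellation, to a sum over finite sets $N$ that are \emph{not} contained in $T_n$. First I would write $-\sum_{g\in T_n}\varphi(g\cdot x)=\sum_{g\in T_n}\sum_{M\in\F_e(G)}|M|^{-1}\Phi(M,g\cdot x)$, use $G$-invariance of $\Phi$ to replace $\Phi(M,g\cdot x)$ by $\Phi(Mg,x)$, and reindex the inner sum through the cardinality-preserving bijection $M\mapsto N:=Mg$ from $\F_e(G)$ onto $\{N\in\F(G):g\in N\}$. Since $\sum_{M\in\F_e(G)}|M|^{-1}\sup_{y\in X}|\Phi(M,y)|\le\|\Phi\|<\infty$ and $\sup_y|\Phi(M,g\cdot y)|=\sup_y|\Phi(M,y)|$, the resulting double sum over pairs $(g,N)$ with $g\in T_n$ and $g\in N$ is absolutely convergent (and bounded by $|T_n|\|\Phi\|$), so I may regroup it by $N$, using $\sum_{g\in T_n}1_{\{g\in N\}}=|N\cap T_n|$, to get
\[
-\sum_{g\in T_n}\varphi(g\cdot x)=\sum_{N\in\F(G)}\frac{|N\cap T_n|}{|N|}\,\Phi(N,x),
\]
the terms with $N\cap T_n=\emptyset$ contributing nothing.

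Next I would subtract $E(x_{T_n})=\sum_{N\in\F(T_n)}\Phi(N,x)$. For $N\subseteq T_n$ one has $|N\cap T_n|/|N|=1$, so every such term cancels and
\[
-\sum_{g\in T_n}\varphi(g\cdot x)-E(x_{T_n})=\sum_{\substack{N\in\F(G)\\ N\not\subseteq T_n}}\frac{|N\cap T_n|}{|N|}\,\Phi(N,x),
\]
where only $N$ meeting $T_n$ matter. Taking absolute values, writing $|N\cap T_n|=\sum_{g\in T_n}1_{\{g\in N\}}$, interchanging the order of summation once more, and undoing the reindexing (now summing over $M\in\F_e(G)$ with $Mg\not\subseteq T_n$, so that $|M|=|N|$ and $\sup_y|\Phi(M,y)|=\sup_y|\Phi(N,y)|$), I obtain the bound, \emph{uniform in} $x$,
\[
\Bigl|{-\sum_{g\in T_n}\varphi(g\cdot x)-E(x_{T_n})}\Bigr|\le\sum_{M\in\F_e(G)}\frac{\sup_{y\in X}|\Phi(M,y)|}{|M|}\,\bigl|\{g\in T_n:Mg\not\subseteq T_n\}\bigr|.
\]

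To control the cardinality on the right I would observe that $\{g\in T_n:Mg\not\subseteq T_n\}=\bigcup_{m\in M}(T_n\setminus m^{-1}T_n)\subseteq\bigcup_{m\in M,\,m\neq e}(m^{-1}T_n\triangle T_n)$, so its size is at most $\sum_{m\in M\setminus\{e\}}|m^{-1}T_n\triangle T_n|$, which is $o(|T_n|)$ for each fixed $M$ by the F{\o}lner property (and is trivially $\le|T_n|$). Dividing the last display by $|T_n|$, the $M$-th summand is then dominated by $\sup_y|\Phi(M,y)|$, with $\sum_M\sup_y|\Phi(M,y)|=\|\Phi\|<\infty$, and tends to $0$ as $n\to\infty$; a dominated-convergence argument for series (split $\F_e(G)$ into a finite set carrying all but $\varepsilon$ of $\|\Phi\|$ and its tail, handle the finitely many F{\o}lner limits and the tail bound separately) yields that the whole sum tends to $0$, uniformly in $x$. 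The only delicate point is precisely this interchange of $\lim_n$ with the infinite sum over $M\in\F_e(G)$ — it is what makes absolute summability of $\Phi$ essential — but once the telescoping identity above is established there is no genuine obstacle.
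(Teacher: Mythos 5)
Your argument is correct; note that the paper itself gives no proof of this lemma, citing it directly from Gurevich--Tempelman, and your regrouping of the Birkhoff sum of $\varphi$ into $\sum_{N}\frac{|N\cap T_n|}{|N|}\Phi(N,x)$ followed by a F{\o}lner boundary estimate and dominated convergence over $M\in\F_e(G)$ is precisely the standard route to this result. The only point worth flagging is the harmless convention that $\Phi(\emptyset,\cdot)=0$ (so that the $N=\emptyset$ term of $E(x_{T_n})$ does not disturb the telescoping); everything else, including the justification of the rearrangements via absolute summability and the uniformity in $x$, is in order.
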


Combining Lemma \ref{lem2}, Eq. (\ref{eq10}), and the definition of $T_n$, we obtain
\begin{eqnarray}
&		&	-|T_n|^{-1} \log \mu([x_{T_n}])											\\
&	=	&	|T_n|^{-1}(\log Z(T_n) - \sum_{g \in T_n} \varphi(g \cdot x) + o(|T_n|))				\\
&	=	&	|KF_n|^{-1}\log Z(KF_n) - |K|^{-1} \sum_{i=1}^{[G:H]} |F_n|^{-1} \sum_{h \in F_n} \varphi_i(h \cdot x) + o(1),
\end{eqnarray}
where $\varphi_i(x) := \varphi(k_i \cdot x)$. Denote $\varphi_K(x) := \sum_{i=1}^{[G:H]} \varphi_i(x)$. Then, applying the pointwise ergodic theorem (see Theorem \ref{ergodicThm}) for each $\varphi_i$, and considering that $|K| = [G:H]$, we have the following result.

\begin{corollary}[{\cite[Corollary 2]{1-gurevich}}]
\label{cor1}
If $\{F_n\}$ is a tempered F{\o}lner sequence for $H$, $(X,\{T_n\})$ satisfies condition (D), and $\Phi$ is a potential such that $\mathcal{G}(X,\Phi) \neq \emptyset$, then
\begin{equation}
-|T_n|^{-1} \log \mu([x_{T_n}]) \xrightarrow[n \to \infty]{\nu(x)\text{-a.e.}} \mathrm{P}(\Phi) - [G:H]^{-1} \E_\nu\left[\varphi_K \vert \invAlg_H\right](x)
\end{equation}
for any $\nu \in \invMeasH$.
\end{corollary}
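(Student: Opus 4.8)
The plan is to take the displayed chain of equalities immediately preceding the statement of Corollary~\ref{cor1} as the starting point, and then apply the pointwise ergodic theorem (Theorem~\ref{ergodicThm}) to each translated function $\varphi_i$ separately. Concretely, the three-line display already establishes that
\begin{equation*}
-|T_n|^{-1} \log \mu([x_{T_n}]) = |KF_n|^{-1}\log Z(KF_n) - |K|^{-1} \sum_{i=1}^{[G:H]} |F_n|^{-1} \sum_{h \in F_n} \varphi_i(h \cdot x) + o(1),
\end{equation*}
where the $o(1)$ term is uniform in $x$ and comes from combining Lemma~\ref{lem2} with the two-sided bound in Eq.~(\ref{eq10}) (which itself uses that $(X,\{T_n\})$ satisfies condition (D) and $\mathcal{G}(X,\Phi)\neq\emptyset$, via Lemma~\ref{bounds}). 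So what remains is purely an ergodic-averaging argument for a finite sum of $L^1_\nu$ functions.

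First I would note that each $\varphi_i = \varphi\circ k_i$ lies in $\mathcal{C}(X)\subseteq L^1_\nu$, since $\varphi$ is continuous (hence bounded) on the compact set $X$ and $k_i\cdot{}$ is a homeomorphism; in particular $\varphi_K = \sum_i \varphi_i \in L^1_\nu$ as well. Next, since $\{F_n\}$ is a tempered F{\o}lner sequence for $H$ and $\nu\in\invMeasH$, Theorem~\ref{ergodicThm} applies to each $\varphi_i$ and yields
\begin{equation*}
|F_n|^{-1} \sum_{h \in F_n} \varphi_i(h \cdot x) \xrightarrow[n\to\infty]{\nu(x)\text{-a.e.}} \E_\nu[\varphi_i \mid \invAlg_H](x).
\end{equation*}
Taking a finite intersection of the corresponding full-measure sets, all $d=[G:H]$ convergences hold simultaneously on a single $\nu$-conull set, so by linearity of conditional expectation,
\begin{equation*}
|K|^{-1}\sum_{i=1}^{[G:H]} |F_n|^{-1} \sum_{h \in F_n} \varphi_i(h \cdot x) \xrightarrow[n\to\infty]{\nu(x)\text{-a.e.}} [G:H]^{-1}\,\E_\nu[\varphi_K \mid \invAlg_H](x),
\end{equation*}
using $|K|=[G:H]$. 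Finally, Theorem~\ref{thm1}(1) guarantees $\mathrm{P}(\Phi)$ is well defined as $\lim_n |KF_n|^{-1}\log Z(KF_n)$ (note $\{T_n\}=\{KF_n\}$ is F{\o}lner for $G$ by Lemma~\ref{fol1}, and by Lemma~\ref{fol2} inherits condition (D)), so substituting this limit and the two convergences above into the displayed identity gives the claimed a.e.\ limit $\mathrm{P}(\Phi) - [G:H]^{-1}\E_\nu[\varphi_K\mid\invAlg_H](x)$.

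The only genuinely delicate point is bookkeeping rather than mathematics: one must be careful that the $o(1)$ error term really is uniform over $x\in X$ (so it vanishes for \emph{every} $x$, not just $\nu$-a.e.), which requires tracing through that $r_n = o(|T_n|)$ in Lemma~\ref{bounds} and the uniform convergence in Lemma~\ref{lem2}; and that the pointwise (rather than merely $L^1_\nu$) conclusion of Theorem~\ref{ergodicThm} is exactly what is invoked, which is why temperedness of $\{F_n\}$ is hypothesized. Everything else is routine manipulation, so I do not expect any real obstacle here.
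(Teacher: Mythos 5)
Your proposal is correct and follows essentially the same route as the paper, which likewise derives Corollary~\ref{cor1} by combining the displayed identity (obtained from Lemma~\ref{lem2} and Eq.~(\ref{eq10})) with the pointwise ergodic theorem applied to each $\varphi_i$ and the identification $|K|=[G:H]$. Your additional remarks on the uniformity of the $o(1)$ term and on intersecting the finitely many conull sets are exactly the bookkeeping the paper leaves implicit.
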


In addition, if we apply the bounded convergence theorem (BCT), and given the just established pointwise convergence, we also have the following corollary.

\begin{corollary}
\label{cor2}
If $\{F_n\}$ is a tempered F{\o}lner sequence for $H$, $(X,\{T_n\})$ satisfies condition (D), and $\Phi$ is a potential such that $\mathcal{G}(X,\Phi) \neq \emptyset$, then
\begin{equation}
-|T_n|^{-1} \log \mu([x_{T_n}])	\xrightarrow[n \to \infty]{L^1_\nu} \mathrm{P}(\Phi) - [G:H]^{-1} \E_\nu\left[\varphi_K \vert \invAlg_H\right](x)
\end{equation}
for any $\nu \in \invMeasH$.
\end{corollary}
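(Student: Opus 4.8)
The plan is to deduce the $L^1_\nu$ convergence directly from the pointwise ($\nu(x)$-a.e.) convergence already established in Corollary~\ref{cor1}, by applying the bounded convergence theorem on the probability space $(X,\B_G,\nu)$. The only additional ingredient needed is a dominating function that is independent of $n$, and this is already in hand: Eq.~(\ref{smbratio}) — which rests on Lemma~\ref{bounds}, hence on the standing hypothesis that $(X,\{T_n\})$ satisfies condition (D) — shows that $0 \le -|T_n|^{-1}\log\mu([x_{T_n}]) \le \log|S| + 2\|\Phi\| + o(1)$ uniformly over $x \in X$. Absorbing the vanishing $o(1)$ into the constant, there is a single finite constant $C = C(S,\Phi)$ with $0 \le -|T_n|^{-1}\log\mu([x_{T_n}]) \le C$ for all $n \in \N$ and all $x \in X$.

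Since $\nu$ is a probability measure, the constant function $C$ lies in $L^1_\nu$ and dominates the whole sequence. By Corollary~\ref{cor1} the sequence converges $\nu(x)$-a.e. to $g(x) := \mathrm{P}(\Phi) - [G:H]^{-1}\E_\nu[\varphi_K \mid \invAlg_H](x)$; here $\mathrm{P}(\Phi)$ is well defined by Theorem~\ref{thm1}(1), and $\varphi_K = \sum_{i=1}^{[G:H]}\varphi(k_i\cdot\,)$ is a finite sum of continuous functions on the compact space $X$, hence bounded, so (conditional expectation being an $L^\infty$ contraction) $g \in L^1_\nu$. The bounded convergence theorem then gives $\int \big| {-}|T_n|^{-1}\log\mu([x_{T_n}]) - g(x)\big|\,d\nu \to 0$, which is exactly the assertion.

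A slightly different route, which I would note avoids temperedness altogether, argues in $L^1_\nu$ from the start: write $-|T_n|^{-1}\log\mu([x_{T_n}]) = |KF_n|^{-1}\log Z(KF_n) - |K|^{-1}\sum_{i=1}^{[G:H]}|F_n|^{-1}\sum_{h\in F_n}\varphi_i(h\cdot x) + o(1)$ with the $o(1)$ uniform over $x$ (from Lemma~\ref{lem2} together with Eq.~(\ref{eq10}) and $T_n = KF_n$), observe that the first term tends to $\mathrm{P}(\Phi)$ by Theorem~\ref{thm1}(1), and apply the mean ergodic theorem (Theorem~\ref{ergodicThm}) to each $\varphi_i\in L^1_\nu$ — which requires only $\nu\in\invMeasH$. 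Summing the resulting $L^1_\nu$-limits and using $|K| = [G:H]$ reproduces $g$. I do not expect a genuine obstacle: the substance is already contained in Corollary~\ref{cor1} and in the uniform bound~(\ref{smbratio}), and the only point requiring care is to make the uniform-in-$x$ control explicit, so that convergence takes place in $L^1_\nu$ rather than merely in $L^1$ restricted to individual cylinders — but Eq.~(\ref{smbratio}) packages precisely this, so the argument is essentially bookkeeping.
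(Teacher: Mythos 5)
Your proposal is correct and follows essentially the same route as the paper, which derives Corollary~\ref{cor2} by applying the bounded convergence theorem to the pointwise convergence of Corollary~\ref{cor1}, using the uniform bound on the SMB ratio from Eq.~(\ref{smbratio}). Your alternative route bypassing temperedness is also anticipated by the paper, in the remark immediately following the corollary.
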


\begin{remark}
A version of Corollary \ref{cor2} (i.e., $L^1_\nu$ convergence of the SMB ratio) without requiring $\{T_n\}$ to be tempered nor going through pointwise convergence can be also proven. However, since all the conditions on F{\o}lner sequences of the main pressure representation theorem (see Theorem \ref{pressureRep} and Section \ref{sec4} in general) remain true after taking subsequences and any F{\o}lner sequence has a tempered subsequence, this is enough.
\end{remark}

\section{Pressure representation theorem}
\label{sec4}

In this section we will state and prove the main result of this work, namely Theorem \ref{pressureRep}. In order to achieve this, its proof has been divided in several steps, each of them made up of lemmas and propositions. There can be distinguished three main steps in the proof, each of them encompassed in a corresponding subsection. First, given a F{\o}lner sequence, we will show that the SMB ratio can be expressed as a summation of several information functions conditioned on finite portions of cosets from the algebraic past of an orderable subgroup of $G$ (which itself is not necessarily orderable). This decomposition is a more general and sophisticated version of what Gamarnik and Katz called the \emph{sequential cavity method} in \cite{1-gamarnik} and it is also related to ideas from Helvik and Lindgren \cite{1-helvik} (see Section \ref{sec5} for a discussion on this last point). Second, we group all these information functions in finitely many specific classes, and prove that when ``most'' of the elements in each class are ``similar'' to a particular function, then we can represent pressure using a formula involving finitely many representative functions, one for each class. Third, we define a natural way to group these information functions in classes which at the same time induces a notion of convergence in a net sense to the corresponding representative function. Finally, we put all these elements together and state Theorem \ref{pressureRep}.

Recall that we have fixed $G$ to be a countable amenable virtually orderable group with $(H,\leqslant)$ an amenable linearly right-ordered subgroup $H \leq G$ of finite index $[G:H]$ and $K = \{k_1,\dots,k_{[G:H]}\} \subseteq G$ a left transversal so that $G = KH$ and $k_iH \cap k_{i'}H = \emptyset$ for $i \neq i'$. In addition, we will fix $\{F_n\}$ a F{\o}lner sequence for $H$, so that $\{T_n\}$ is F{\o}lner for $G$, where $T_n = KF_n$ and $(X,\{T_n\})$ satisfies condition (D). Finally, we will also consider an arbitrary partition of $K$ into $\ell$ nonempty subsets $\{L_j\}$ so that $K = L_1 \sqcup \cdots \sqcup L_\ell$ and $L_{j} \cap L_{j'} = \emptyset$ for $j \neq j'$, and we will denote $K_i := L_1 \sqcup \cdots \sqcup L_i$ for $1 \leq i \leq \ell$ and $K_0 = \emptyset$. In particular, $K_\ell = K$.

\subsection{Sequential decomposition of the SMB ratio}

Now, we will proceed to decompose the SMB ratio into a particular sum of conditional probabilities. Let $X$ be a nonempty $G$-subshift and $\mu \in \invMeasG$. Notice that

\begin{equation}
\mu([x_{T_n}]) = \prod_{i=1}^{\ell} \mu([x_{L_iF_n}] \vert [x_{K_{i-1}F_n}]).
\end{equation}

Given $F \in \F(H)$, since $H$ is linearly right-ordered, we can always define a unique maximal element $h \in F$, so that $F \setminus \{h\} = F \cap H^-h$. Iterating this idea on each set $L_iF_n$, and due to the $G$-invariance (in particular, $H$-invariance) of $\mu$, we have
\begin{eqnarray}
&		&	\mu([x_{L_iF_n}] \vert [x_{K_{i-1}F_n}])																				\\
&	=	&	\prod_{h \in F_n} \mu([x_{L_ih }] \vert [x_{L_i(F_n \cap H^-h) \sqcup K_{i-1}F_n}])										\\
&	=	&	\prod_{h \in F_n} \mu(h^{-1} \cdot [(h \cdot x)_{L_i}] \vert h^{-1} \cdot [(h \cdot x)_{L_i(F_nh^{-1} \cap H^-) \sqcup K_{i-1}F_nh^{-1}}])	\\
&	=	&	\prod_{h \in F_n} \mu([(h \cdot x)_{L_i}] \vert [(h \cdot x)_{L_i(F_nh^{-1} \cap H^-) \sqcup K_{i-1}F_nh^{-1}}])					\\
&	=	&	\prod_{h \in F_n} \mu([(h \cdot x)_{L_i}] \vert [(h \cdot x)_{K_iF_nh^{-1} \cap G^-_i}])
\end{eqnarray}
for $1 \leq i \leq \ell$, where
\begin{equation}
G^-_i := L_iH^- \sqcup K_{i-1}H.
\end{equation}

Notice that $G^-_{i-1} \subsetneq K_{i-1}H \subsetneq G^-_{i}$. In addition, given $h \in T_n$, define the set
\begin{equation}
T^-_{n,h}(i) := K_iF_nh^{-1} \cap G^-_i = KF_nh^{-1} \cap G^-_i = T_nh^{-1} \cap G^-_i.
\end{equation}

Then, for $1 \leq i \leq \ell$, we can write
\begin{equation}
\label{identity}
-\log\mu([x_{K_iF_n}]) = \sum_{j=1}^{i} \sum_{h \in F_n} I_\mu(L_i \vert T^-_{n,h}(i))(h \cdot x),
\end{equation}
where $ I_\mu(L_i \vert T^-_{n,h}(i))(x) := -\log\mu\left([x_{L_i}] \middle\vert [x_{T^-_{n,h}(i)}]\right)$ is the information function of $\alpha^{L_i}$ conditioned on $\alpha^{T^-_{n,h}(i)} = \B_{T^-_{n,h}(i)}$. In particular,
\begin{equation}
\label{formula}
-\log\mu([x_{T_n}]) = \sum_{i=1}^{\ell} \sum_{h \in F_n} I_\mu(L_i \vert T^-_{n,h}(i))(h \cdot x).
\end{equation}

Whenever $\ell = 1$, we will omit the index $i$ in expressions like $I_\mu(L_i \vert T^-_{n,h}(i))(x)$, i.e., we will write $I_\mu(K \vert T^-_{n,h})(x)$ instead of $I_\mu(L_1 \vert T^-_{n,h}(1))(x)$, etc.

The cases $\ell = 1$ and $\ell = [G:H]$ represent two extreme cases in the way of decomposing $T_n$, both --together with the intermediate cases-- potentially useful. When $\ell = 1$, we are splitting $T_n = KF_n$ in disjoint $|F_n|$ translations of $K$, let's say $\{Kh\}_{h \in F_n}$, and ordering them according to the order of $h$ in $H$. On the other hand, when $\ell = [G:H]$, what we do is to decompose $T_n$ in $[G:H]$ ``slices'' $k_1F_n,\dots,k_{\ell}F_n$ arbitrarily ordered and order each of them according to the order of $H$, giving two alternative ways of expressing the same quantity. We will see that this decomposition generalizes what is done in \cite[Theorem 1]{1-helvik} for the case of finite unions of \emph{point lattices} (i.e. discrete abelian subgroups of $\R^d$), where they also include some applications (see Section \ref{mueqnu}).

\subsection{Convergence results}

We are interested in the case when ``most'' of the elements $I_\mu(L_i \vert T^-_{n,h}(i))$ in the previous averages are ``similar'' to a function $f_i \in L^1_\nu$ in some sense to be made precise. This idea was present along all papers in the sequence \cite{1-gamarnik,1-marcus,1-briceno,1-adams} and it will be now formalized in a mean sense here. Consider the two following propositions.

\begin{proposition}
\label{propIFF}
Let $\{F_n\}$ be a F{\o}lner sequence for $H$, $\mu \in \gibMeas$, $\nu \in \invMeasH$ with $\supp(\nu) \subseteq \supp(\mu)$, and $f_1,\dots,f_\ell \in L^1_\nu$. Then,
\begin{eqnarray}
|F_n|^{-1} \sum_{h \in F_n} (I_\mu(L_i \vert T^-_{n,h}(i)) - f_i)(h \cdot x)  \xrightarrow[n \to \infty]{L^1_\nu} 0	&	\text{for all } 1 \leq i \leq \ell
\end{eqnarray}
if and only if
\begin{eqnarray}
-|F_n|^{-1}\log \mu([x_{K_iF_n}]) \xrightarrow[n \to \infty]{L^1_\nu} \E_\nu[f_1 + \cdots + f_i \vert \invAlg_H]	&	\text{for all } 1 \leq i \leq \ell.
\end{eqnarray}
\end{proposition}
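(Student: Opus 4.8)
The plan is to prove the two convergences equivalent by using the telescoping identity \eqref{identity} together with the fact that the individual partial-sum ratios $-|F_n|^{-1}\log\mu([x_{K_iF_n}])$ are built up additively from the blocks $|F_n|^{-1}\sum_{h\in F_n} I_\mu(L_j\mid T^-_{n,h}(j))(h\cdot x)$ for $j\le i$. The hypothesis $\supp(\nu)\subseteq\supp(\mu)$ guarantees that every $I_\mu(L_i\mid T^-_{n,h}(i))$ is finite $\nu$-a.e. (the conditioning events $[x_{T^-_{n,h}(i)}]$ and $[x_{K_iF_n}]$ all have positive $\mu$-measure on $\supp(\mu)\supseteq\supp(\nu)$), so all the expressions in sight are genuine elements of $L^1_\nu$ — this is where that assumption is used, and it replaces the $\supp(\mu)=X$ fact coming from Corollary~\ref{corsupp} in the Gibbs-with-condition-(D) case.

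First I would record the algebraic identity obtained from \eqref{identity}: for each $1\le i\le\ell$,
\begin{equation*}
-|F_n|^{-1}\log\mu([x_{K_iF_n}]) \;=\; \sum_{j=1}^{i} |F_n|^{-1}\sum_{h\in F_n} I_\mu(L_j\mid T^-_{n,h}(j))(h\cdot x).
\end{equation*}
Next, for each fixed $j$, I would apply the $L^1_\nu$ mean ergodic theorem (Theorem~\ref{ergodicThm}) to the fixed function $f_j\in L^1_\nu$: since $\nu\in\invMeasH$ and $\{F_n\}$ is F{\o}lner for $H$,
\begin{equation*}
|F_n|^{-1}\sum_{h\in F_n} f_j(h\cdot x)\xrightarrow[n\to\infty]{L^1_\nu}\E_\nu[f_j\mid\invAlg_H](x).
\end{equation*}
Subtracting, the block average of $I_\mu(L_j\mid T^-_{n,h}(j))$ along $F_n$ converges in $L^1_\nu$ to $\E_\nu[f_j\mid\invAlg_H]$ if and only if the "centered" block average $|F_n|^{-1}\sum_{h\in F_n}(I_\mu(L_j\mid T^-_{n,h}(j))-f_j)(h\cdot x)$ converges to $0$ in $L^1_\nu$. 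This gives the forward implication directly (sum the $j=1,\dots,i$ block convergences, using linearity of $\E_\nu[\cdot\mid\invAlg_H]$ and the triangle inequality in $L^1_\nu$), and establishes: "$\Leftarrow$" follows from "$\Rightarrow$" for $i-1$ plus the identity, by induction on $i$.

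For the reverse implication I would argue by induction on $i$. The base case $i=1$ is immediate: $-|F_n|^{-1}\log\mu([x_{K_1F_n}])$ \emph{is} the $j=1$ block average, so its $L^1_\nu$ convergence to $\E_\nu[f_1\mid\invAlg_H]$ is exactly the $i=1$ centered-block statement by the mean ergodic theorem applied to $f_1$. For the inductive step, assuming the centered-block convergence holds for indices $1,\dots,i-1$, I subtract the identity for $K_{i-1}F_n$ from the identity for $K_iF_n$:
\begin{equation*}
|F_n|^{-1}\sum_{h\in F_n} I_\mu(L_i\mid T^-_{n,h}(i))(h\cdot x) = -|F_n|^{-1}\log\mu([x_{K_iF_n}]) + |F_n|^{-1}\log\mu([x_{K_{i-1}F_n}]).
\end{equation*}
By hypothesis the right-hand side converges in $L^1_\nu$ to $\E_\nu[f_1+\cdots+f_i\mid\invAlg_H]-\E_\nu[f_1+\cdots+f_{i-1}\mid\invAlg_H]=\E_\nu[f_i\mid\invAlg_H]$, and then subtracting the mean ergodic limit of $|F_n|^{-1}\sum_{h\in F_n}f_i(h\cdot x)$ yields the $i$-th centered-block convergence. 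I expect the only real subtlety — and the point worth stating carefully rather than the mechanics of the induction — to be the measurability and $L^1_\nu$-integrability bookkeeping: one must check that $x\mapsto I_\mu(L_i\mid T^-_{n,h}(i))(h\cdot x)$ is a well-defined element of $L^1_\nu$ (finiteness $\nu$-a.e. from $\supp(\nu)\subseteq\supp(\mu)$, and integrability since each such function is dominated in $L^1_\nu$ once we know, e.g. via \eqref{smbratio} in the Gibbsian setting or directly, that the partial SMB ratios are controlled), so that every limiting operation genuinely takes place inside $L^1_\nu$ and the equivalence is between statements about the same objects.
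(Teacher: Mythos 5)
Your proposal is correct and follows essentially the same route as the paper: use the mean ergodic theorem applied to each $f_j$ to convert the centered-block condition into convergence of the block averages, then pass between the two statements via the telescoping identity \eqref{identity} in one direction and by differencing the partial logarithms for $K_iF_n$ and $K_{i-1}F_n$ in the other. The only cosmetic difference is that you phrase the reverse direction as an induction on $i$, whereas the paper simply invokes the hypothesis for $i$ and $i-1$ simultaneously; the integrability remarks you add are sound and consistent with what the paper leaves implicit.
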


\begin{proof}
By the mean ergodic theorem, for any $1 \leq i \leq \ell$,
\begin{equation}
|F_n|^{-1} \sum_{h \in F_n} (I_\mu(L_i \vert T^-_{n,h}(i)) - f_i)(h \cdot x)  \xrightarrow[n \to \infty]{L^1_\nu} 0
\end{equation}
if and only if
\begin{equation}
\label{equiv}
|F_n|^{-1} \sum_{h \in F_n} I_\mu(L_i \vert T^-_{n,h}(i))(h \cdot x)  \xrightarrow[n \to \infty]{L^1_\nu} \E_\nu[f_i \vert \invAlg_H].
\end{equation}

Then, by Eq. (\ref{identity}), Eq. (\ref{equiv}), and linearity of conditional expectation,
\begin{eqnarray}
-|F_n|^{-1}\log \mu([x_{K_iF_n}])	&	= 							&	\sum_{j=1}^{i}  |F_n|^{-1} \sum_{h \in F_n} I_\mu(L_i \vert T^-_{n,h}(j))(h \cdot x)	\\
							&	\xrightarrow[n \to \infty]{L^1_\nu}	&	\sum_{j=1}^{i}\E_\nu[f_j \vert \invAlg_H](x).
\end{eqnarray}

Now, to prove the other direction, notice that
\begin{eqnarray}
&								&	|F_n|^{-1} \sum_{h \in F_n} I_\mu(L_i \vert T^-_{n,h}(i))(h \cdot x)				\\
&	= 							&	|F_n|^{-1}(-\log \mu([x_{K_iF_n}]) + \log \mu([x_{K_{i-1}F_n}]))					\\
&	\xrightarrow[n \to \infty]{L^1_\nu}	&	\E_\nu[f_1+\cdots+f_i \vert \invAlg_H] - \E_\nu[f_1+\cdots+f_{i-1} \vert \invAlg_H]	\\
&	=							&	\E_\nu[f_i \vert \invAlg_H],
\end{eqnarray}
and we conclude by Eq. (\ref{equiv}).
\end{proof}

Combining Proposition \ref{propIFF} and Corollary \ref{cor2}, we obtain the following result, which gives us a way to relate the SMB ratio, the average of information functions, and pressure.

\begin{proposition}
\label{mainProp}
Let $\{F_n\}$ be a F{\o}lner sequence for $H$ such that $(X,\{KF_n\})$ satisfies condition (D), $\Phi$ a potential such that $\mathcal{G}(X,\Phi) \neq \emptyset$, $\nu \in \invMeasH$, and $f_1,\dots,f_\ell \in L^1_\nu$. If
\begin{eqnarray}
\label{hypothesis}
|F_n|^{-1} \sum_{h \in F_n} (I_\mu(L_i \vert T^-_{n,h}(i)) - f_i)(h \cdot x)  \xrightarrow[n \to \infty]{L^1_\nu} 0	&	\text{for all } 1 \leq i \leq \ell,
\end{eqnarray}
then
\begin{eqnarray}
\mathrm{P}(\Phi)	&	=						&	\frac{1}{[G:H]} \int{\left(\sum_{i=1}^\ell f_i + \varphi_K\right)}d\nu	\\
		&	\stackrel{\nu(x)\text{-a.e.}}{=}	&	\frac{1}{[G:H]} \E_\nu\left[\sum_{i=1}^\ell f_i + \varphi_K \middle\vert \invAlg_H\right](x).
\end{eqnarray}

On the other hand, if $\ell = 1$, we also have the following converse:
\begin{eqnarray}
	&	\mathrm{P}(\Phi) \stackrel{\nu(x)\text{-a.e.}}{=} \E_\nu[f + \varphi_K \vert \invAlg_H](x)						\\
\iff	&	|F_n|^{-1} \sum_{h \in F_n} (I_\mu(K \vert T^-_{n,h}) - f)(h \cdot x)  \xrightarrow[n \to \infty]{L^1_\nu} 0.
\end{eqnarray}
\end{proposition}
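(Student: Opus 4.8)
The plan is to compute the $L^1_\nu$-limit of $-|F_n|^{-1}\log\mu([x_{T_n}])$ (with $T_n=KF_n$) in two different ways---once via the sequential decomposition of Eq.~(\ref{formula}) together with the hypothesis, once via the Gurevich--Tempelman Breiman theorem---and then equate the two and integrate. Two preliminary remarks. First, condition (D) on $(X,\{KF_n\})$ forces $\supp(\mu)=X$ by Corollary \ref{corsupp}; hence $\supp(\nu)\subseteq\supp(\mu)$, so Proposition \ref{propIFF} applies, and every cylinder $[x_{T^-_{n,h}(i)}]$ (with $T^-_{n,h}(i)\subseteq T_nh^{-1}$ finite) has positive $\mu$-measure, so the information functions below are everywhere defined. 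Second, although Corollary \ref{cor2} is stated for tempered $\{F_n\}$, its hypotheses and the hypothesis (\ref{hypothesis}) are inherited by subsequences, every F{\o}lner sequence has a tempered F{\o}lner subsequence, and the limits we shall identify ($\mathrm{P}(\Phi)$, or $0$) are prescribed in advance; so by the standard ``every subsequence has a further subsequence converging to the prescribed limit'' argument we may argue along tempered $\{F_n\}$.

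For the direct implication, apply Proposition \ref{propIFF} with $i=\ell$ (so $K_\ell F_n=KF_n=T_n$): hypothesis (\ref{hypothesis}) gives $-|F_n|^{-1}\log\mu([x_{T_n}])\xrightarrow[n\to\infty]{L^1_\nu}\E_\nu[\,f_1+\cdots+f_\ell\,\vert\,\invAlg_H\,]$. On the other hand Corollary \ref{cor2} together with $|T_n|=|KF_n|=[G:H]\,|F_n|$ gives, after multiplying through by $[G:H]$, $-|F_n|^{-1}\log\mu([x_{T_n}])\xrightarrow[n\to\infty]{L^1_\nu}[G:H]\,\mathrm{P}(\Phi)-\E_\nu[\,\varphi_K\,\vert\,\invAlg_H\,]$. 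Uniqueness of $L^1_\nu$-limits and linearity of conditional expectation now give $[G:H]\,\mathrm{P}(\Phi)=\E_\nu[\,\sum_{i=1}^\ell f_i+\varphi_K\,\vert\,\invAlg_H\,]$ $\nu$-a.e., which is the a.e. identity of the statement; integrating over $X$ against $\nu$, using $\int\E_\nu[\,g\,\vert\,\invAlg_H\,]\,d\nu=\int g\,d\nu$ and that $\mathrm{P}(\Phi)$ is a constant, gives the integral formula.

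For the converse (case $\ell=1$): the implication $\Leftarrow$ is the direct implication specialized to $\ell=1$, $f=f_1$. For $\Rightarrow$, note that with $\ell=1$ Eq.~(\ref{formula}) reads $-\log\mu([x_{T_n}])=\sum_{h\in F_n}I_\mu(K\vert T^-_{n,h})(h\cdot x)$, so Corollary \ref{cor2} (multiplied by $[G:H]$) yields $|F_n|^{-1}\sum_{h\in F_n}I_\mu(K\vert T^-_{n,h})(h\cdot x)\xrightarrow[n\to\infty]{L^1_\nu}[G:H]\,\mathrm{P}(\Phi)-\E_\nu[\,\varphi_K\,\vert\,\invAlg_H\,]$; subtracting $|F_n|^{-1}\sum_{h\in F_n}f(h\cdot x)\xrightarrow[n\to\infty]{L^1_\nu}\E_\nu[\,f\,\vert\,\invAlg_H\,]$ (mean ergodic theorem, Theorem \ref{ergodicThm}) we see that $|F_n|^{-1}\sum_{h\in F_n}(I_\mu(K\vert T^-_{n,h})-f)(h\cdot x)$ converges in $L^1_\nu$ to $[G:H]\,\mathrm{P}(\Phi)-\E_\nu[\,f+\varphi_K\,\vert\,\invAlg_H\,]$, and this limit vanishes exactly when $[G:H]\,\mathrm{P}(\Phi)=\E_\nu[\,f+\varphi_K\,\vert\,\invAlg_H\,]$ $\nu$-a.e.---the $\ell=1$ instance of the a.e. identity above---which is the claimed equivalence. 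Every step here is in fact reversible, which is why the direct implication with $\ell=1$ already encodes the converse.

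The proof has essentially no hard step: all the analytic substance lies in Proposition \ref{propIFF}, Corollary \ref{cor2}, and the mean ergodic theorem, and what is left is to track the normalization $|T_n|=[G:H]\,|F_n|$, invoke uniqueness of $L^1$-limits and linearity of conditional expectation, and integrate once. The two points requiring a moment's care---both settled in the first paragraph---are the support hypothesis of Proposition \ref{propIFF} (furnished by Corollary \ref{corsupp}) and the temperedness hypothesis of Corollary \ref{cor2} (removed by the subsequence argument mentioned in the remark following that corollary).
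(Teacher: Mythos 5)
Your proof is correct and follows essentially the same route as the paper's: Proposition~\ref{propIFF} applied at $i=\ell$, compared with Corollary~\ref{cor2} via uniqueness of $L^1_\nu$-limits, then one integration, with the $\ell=1$ converse obtained by running Proposition~\ref{propIFF} (an equivalence) backwards; your explicit checks that condition (D) gives $\supp(\mu)=X\supseteq\supp(\nu)$ (needed to invoke Proposition~\ref{propIFF}) and that temperedness can be arranged by passing to subsequences are points the paper leaves implicit. One remark: your derivation of the converse correctly yields $\mathrm{P}(\Phi)\stackrel{\nu\text{-a.e.}}{=}\frac{1}{[G:H]}\E_\nu[f+\varphi_K\mid\invAlg_H]$, consistent with the forward direction and with the paper's own proof, whereas the displayed statement of the converse omits the factor $\frac{1}{[G:H]}$ --- an apparent typo in the proposition as stated rather than a defect of your argument.
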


\begin{proof}
By Corollary \ref{cor2},
\begin{equation}
\label{EqA}
-|T_n|^{-1} \log \mu([x_{T_n}])	\xrightarrow[n \to \infty]{L^1_\nu} \mathrm{P}(\Phi) -  [G:H]^{-1} \E_\nu\left[\varphi_K \vert \invAlg_H\right](x).
\end{equation}

On the other hand, by Proposition \ref{propIFF}, Eq. (\ref{hypothesis}) is equivalent to
\begin{equation}
-|K_iF_n|^{-1}\log \mu([x_{K_iF_n}]) \xrightarrow[n \to \infty]{L^1_\nu} |K_i|^{-1} \E_\nu[f_1 + \cdots + f_i \vert \invAlg_H]
\end{equation}
for all $1 \leq i \leq \ell$. In particular, for $i = \ell$,
\begin{equation}
\label{EqB}
-|KF_n|^{-1}\log \mu([x_{KF_n}]) \xrightarrow[n \to \infty]{L^1_\nu} [G:H]^{-1} \E_\nu[f_1 + \cdots + f_\ell \vert \invAlg_H].
\end{equation}

Therefore, since $KF_n = T_n$ and the limits in Eq. (\ref{EqA}) and Eq. (\ref{EqB}) should coincide $\nu(x)$-a.e., we have
\begin{equation}
\mathrm{P}(\Phi) - [G:H]^{-1} \E_\nu\left[\varphi_K \vert \invAlg_H\right](x) \stackrel{\nu(x)\text{-a.e.}}{=} [G:H]^{-1} \E_\nu[f_1 + \cdots + f_\ell \vert \invAlg_H],
\end{equation}
so
\begin{equation}
\mathrm{P}(\Phi) \stackrel{\nu(x)\text{-a.e.}}{=} \frac{1}{[G:H]} \E_\nu\left[\sum_{i=1}^\ell f_i + \varphi_K \middle\vert \invAlg_H\right](x).
\end{equation}

Since $\mathrm{P}(\Phi)$ is constant, then $\mathrm{P}(\Phi) = \int{\mathrm{P}(\Phi)}d\nu$, and we conclude that
\begin{eqnarray}
\mathrm{P}(\Phi)	&	=						&	\frac{1}{[G:H]} \sum_{i=1}^\ell \int{(f_i + \frac{1}{\ell} \varphi_K)}d\nu	\\
		&	\stackrel{\nu(x)\text{-a.e.}}{=}	&	\frac{1}{[G:H]} \sum_{i=1}^\ell \E_\nu[f_i+ \frac{1}{\ell} \varphi_K \vert \invAlg_H](x).
\end{eqnarray}

Now, if $\ell = 1$, Eq. (\ref{hypothesis}) is equivalent to
\begin{equation}
|F_n|^{-1} \sum_{h \in F_n} (I_\mu(K \vert T^-_{n,h}) - f)(h \cdot x)  \xrightarrow[n \to \infty]{L^1_\nu} 0,
\end{equation}
which at the same time is equivalent to
\begin{equation}
-|KF_n|^{-1}\log \mu([x_{KF_n}]) \xrightarrow[n \to \infty]{L^1_\nu} [G:H]^{-1} \E_\nu[f \vert \invAlg_H].
\end{equation}

Then, applying Corollary \ref{cor2} again, we prove the converse.
\end{proof}

We have obtained a pressure representation formula when $I_\mu(L_i \vert T^-_{n,h}(i))$ is ``similar'' to $f_i \in L^1_\nu$ in the average sense of Eq. (\ref{hypothesis}). In the following, we will see a way to define a natural candidate for such $f_i$. 

\subsection{A natural directed set}

In the following, we will define an order and directed set which ultimately gives us a notion of convergence in a net sense for information functions, and for which the representative functions $f_i$ naturally emerge as limiting functions.

Given $\{F_n\}$ F{\o}lner for $H$, define the directed set $([\{F_n\}], \preccurlyeq)$ as the countable collection
\begin{equation}
[\{F_n\}] = \left\{(n,h): n \in \N, h \in F_n\right\} \subseteq \N \times H
\end{equation}
such that
\begin{equation}
(n_1,h_1) \preccurlyeq (n_2,h_2) \iff n_1 \leq n_2 \text{ and } T_{n_1,h_1}^-(i) \subseteq T_{n_2,h_2}^-(i) \text{ for all } 1 \leq i \leq \ell,
\end{equation}
where $T_{n,h}^-(i) = T_nh^{-1} \cap G^-_i$, $G^-_i = L_iH^- \sqcup K_{i-1}H$, and $K$ is so that $G = KH$.

To see that this is in fact a directed set, we need to prove that there is an upper bound for each pair of elements $(n_1,h_1), (n_2,h_2) \in [\{F_n\}]$. We will use the following lemma.

\begin{lemma}
\label{lemBR}
Let $\{F_n\}$ be a F{\o}lner sequence for $H$. Given a fixed $M \in \F(H)$, define
\begin{equation}
F^M_n := \{h \in F_n: Mh \subseteq T_n\}
\end{equation}
for $n \in \N$. Then $\lim_n \frac{|F^M_n|}{|F_n|} = 1$. Moreover, if $M_1, M_2,\dots$ is a family of sets in $\F(H)$, there exists an increasing function $\gamma: \N \to \N$ such that $\gamma(n) \to \infty$ and for all $n \in \N$,
\begin{equation}
\label{setIneq}
\frac{|F^{M_{\gamma(n)}}_n|}{|F_n|} \geq 1 - \frac{1}{\gamma(n)}.
\end{equation}
\end{lemma}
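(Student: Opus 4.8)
The plan is to prove the two claims separately. For the first claim, fix $M \in \F(H)$. By the translation argument already used in the proof of Corollary~\ref{corsupp} (and the fact that $\{T_n\}$ is F{\o}lner for $G$), the set of $h \in F_n$ for which some translate of $M$ fails to lie in $T_n$ is small. More precisely, $h \notin F^M_n$ exactly when $Mh \not\subseteq T_n$, i.e.\ when $h \in \bigcup_{m \in M} m^{-1}(G \setminus T_n) \cap F_n$; hence $F_n \setminus F^M_n \subseteq \bigcup_{m \in M}\left(m^{-1}(G \setminus T_n) \cap H\right) \cap F_n$. Since $Kh \subseteq T_n$ for $h \in F_n$ (because $T_n = KF_n$) one can bound $|F_n \setminus F^M_n|$ by a sum over $m \in M$ of terms controlled by $|m^{-1}T_n \triangle T_n|$ (possibly after composing with a transversal element, exactly as in Lemma~\ref{fol2}); dividing by $|F_n|$ and using the F{\o}lner property of $\{F_n\}$ (equivalently $\{T_n\}$) gives $|F_n \setminus F^M_n|/|F_n| \to 0$, i.e.\ $|F^M_n|/|F_n| \to 1$.

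For the second claim, I would proceed by a diagonal / pigeonhole extraction. For each fixed $j \in \N$, the first part gives $a^{(j)}_n := |F^{M_j}_n|/|F_n| \to 1$ as $n \to \infty$. The goal is to choose $\gamma(n)$ growing to infinity slowly enough that $a^{(\gamma(n))}_n \geq 1 - 1/\gamma(n)$ for every $n$. The standard device: for each $k \in \N$ pick $N_k$ so that for all $n \geq N_k$ one has $a^{(k)}_n \geq 1 - 1/k$; we may take $N_1 \leq N_2 \leq \cdots$ and $N_1 = 1$ (note $a^{(j)}_n \leq 1$ always, and $a^{(j)}_1$ is whatever it is, but we only need the inequality to hold for the chosen $\gamma$, so starting the staircase appropriately is harmless). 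Then define $\gamma(n) := \max\{k : N_k \leq n\}$, which is nondecreasing, tends to $\infty$ (since each $N_k$ is finite), and satisfies $N_{\gamma(n)} \leq n$, hence $a^{(\gamma(n))}_n \geq 1 - 1/\gamma(n)$, which is exactly \eqref{setIneq}. If one insists on $\gamma$ being strictly increasing rather than merely nondecreasing and tending to infinity, replace $\gamma$ by $n \mapsto \gamma(n) + $ (a correction) or simply pass to the statement as ``increasing'' in the weak sense; alternatively thin out to force strict monotonicity, which only strengthens the decay of $1/\gamma(n)$.

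The only mild subtlety — and the main thing to be careful about — is the bookkeeping in the first part: one must relate $|F_n \setminus F^M_n|$ to symmetric differences $|gT_n \triangle T_n|$ for finitely many group elements $g$ depending only on $M$ and the transversal $K$, and then invoke that $\{T_n\}$ (equivalently $\{F_n\}$, via Lemma~\ref{fol1}) is F{\o}lner. This is entirely parallel to the computation in Lemma~\ref{fol2} and in Corollary~\ref{corsupp}, so no new idea is needed; it is just a matter of writing the inclusion $F_n \setminus F^M_n \subseteq \bigcup_{m \in M, 1 \le i \le d}\left(k_i^{-1} T_n \triangle T_n\right)\cdot(\text{stuff})$ carefully. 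The second part is then a routine $\varepsilon$–$N$ extraction with no obstacle. I do not anticipate any genuine difficulty; the statement is a soft quantitative consequence of the F{\o}lner condition together with a diagonalization.
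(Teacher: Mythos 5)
Your proof follows the paper's argument essentially verbatim: the first claim is the same union bound $|F_n \setminus F^M_n| \le \sum_{m \in M} |F_n \setminus (m^{-1}F_n)|$ driven by the F{\o}lner property of $\{F_n\}$ (the paper works directly with $F_n$ rather than $T_n$, which sidesteps the transversal bookkeeping you describe), and the second claim is the same staircase extraction, your $N_k$ playing the role of the paper's $n(k)$. One nit: take the $N_k$ strictly increasing (as the paper does via $n(j) > n(j-1)$), since with merely $N_1 \le N_2 \le \cdots$ the quantity $\max\{k : N_k \le n\}$ can be infinite in degenerate cases where $|F^{M_k}_n| = |F_n|$ for all $k$.
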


\begin{proof}
If $h \in F_n$ and $Mh \nsubseteq F_n$, then for some $m \in M$, $h \in F_n \setminus (m^{-1}F_n)$. Thus,
\begin{equation}
\frac{|F_n \setminus F^M_n|}{|F_n|} \leq \frac{\sum_{m \in M} |F_n \setminus (m^{-1}F_n)|}{|F_n|}  \leq |M| \max_{m \in M} \frac{|mF_n \triangle F_n|}{|F_n|} \to 0.
\end{equation}

Now, define $n(1) = 1$. For every $j \geq 2$, there exists a minimum $n(j) > n(j-1)$ so that
\begin{eqnarray}
\frac{|F_n^{M_j}|}{|F_n|} \geq 1 - \frac{1}{j}	&	\text{for all $n \geq n(j)$}.
\end{eqnarray}

We can define $\gamma$ as $\gamma(n) = j$ for $n(j) \leq n < n(j+1)$. Then
\begin{eqnarray}
\frac{|F_n^{M_{\gamma(n)}}|}{|F_n|} \geq 1 - \frac{1}{\gamma(n)}	&	\text{for all $n(j+1) > n \geq n(j)$},
\end{eqnarray}
and considering that $\N = \bigcup_j [n(j),n(j+1))$, we have that Eq. (\ref{setIneq}) holds.
\end{proof}

Then, by Lemma \ref{lemBR}, given $(n_1,h_1), (n_2,h_2) \in [\{F_n\}]$, it suffices to take $(n,h)$ so that $n$ is sufficiently large such that $n \geq \max\{n_1,n_2\}$ and $F_n^M \neq \emptyset$, where $M = F_{n_1}h_1^{-1} \cup F_{n_2}h_2^{-1}$. Then, if we take any $h \in F_n^M$, we have that $Mh \subseteq F_n$, so $F_{n_1}h_1^{-1}, F_{n_2}h_2^{-1} \subseteq F_nh^{-1}$. In particular, for all $1 \leq i \leq \ell$,
\begin{equation}
T^-_{n_1,h_1}(i) = KF_{n_1}h_1^{-1} \cap G^-_i \subseteq KF_nh^{-1} \cap G^-_i = T^-_{n,h}(i),
\end{equation}
and the same holds for $T^-_{n_2,h_2}(i)$.

Notice that $[\{F_n\}]$ has the additional property that if $\{F_{n_k}\}$ is a subsequence of $\{F_n\}$, then $([\{F_{n_k}\}], \preccurlyeq)$ is a directed subset .

Now, suppose that there exists $I_{\mu,\nu}(L_i \vert G^-_i) \in L^1_\nu$ such that $I_\mu(L_i \vert T^-_{n,h}(i))$ converges to $I_{\mu,\nu}(L_i \vert G^-_i)$ in $L^1_\nu$ in the $[\{F_n\}]$-sense, this is to say, for every $\epsilon > 0$, there exists $(n_0,h_0) \in [\{F_n\}]$ such that for every $(n,h) \succcurlyeq (n_0,h_0)$,
\begin{eqnarray}
\|I_\mu(L_i \vert T^-_{n,h}(i)) - I_{\mu,\nu}(L_i \vert G^-_i)\|_{\nu} < \epsilon	&	\text{for all $1 \leq i \leq \ell$.}
\end{eqnarray}

Notice that if this is the case, then $I_{\mu,\nu}(L_i \vert G^-_i)(x) \geq 0$ $\nu(x)$-a.e. since $I_\mu(L_i \vert T^-_{n,h}(i))(x) \geq 0$ for all $x \in X$. We will abbreviate this fact by $I_\mu(L_i \vert T^-_{n,h}(i)) \xrightarrow[{[\{F_n\}]}]{L^1_\nu} I_{\mu,\nu}(L_i \vert G^-_i)$ and we will consider this as a necessary condition for having any meaningful pressure representation as in \cite{1-gamarnik,1-marcus,1-briceno,1-adams}, where analogous convergence assumptions were required to be uniform instead. It is reasonable to relax this to a mean sense; after all, what we regard as a pressure representation involves an integration with respect to $\nu$, so asking limits to be in an $L^1_\nu$ sense is a natural assumption.
 
\begin{remark}
A priori, $I_{\mu,\nu}(L_i \vert G^-_i)$ could depend on the particular F{\o}lner sequence $\{F_n\}$ that we use, but we won't reflect this fact in the notation because it is already heavy enough.
\end{remark}

Now, if we assume $L^1_\nu$ convergence in the $[\{F_n\}]$-sense for each $i$, and after removing $o(|F_n|)$ elements from each $F_n$, we can prove that the $i$th average of information functions arising from the sequential decomposition of the SMB ratio (where each information function is conditioned on a different set) converges in $L^1_\nu$ to the ergodic average of $I_{\mu,\nu}(L_i \vert G^-_i)$ (that we naturally identify with the aforementioned representative function $f_i$). We have the following lemma.

\begin{proposition}
\label{prop2}
If $I_\mu(L_i \vert T^-_{n,h}(i)) \xrightarrow[{[\{F_n\}]}]{L^1_\nu} I_{\mu,\nu}(L_i \vert G^-_i)$ for all $1 \leq i \leq \ell$, then we can define a sequence $\{F_{n,\nu}\}$ such that $F_{n,\nu} \subseteq F_n \subseteq H$, $\lim_n \frac{|F_{n,\nu}|}{|F_n|} = 1$, and
\begin{equation}
|F_n|^{-1} \left[\sum_{h \in F_{n,\nu}} I_\mu(L_i \vert T^-_{n,h}(i))(h \cdot x) - \sum_{h \in F_n} I_{\mu,\nu}(L_i \vert G^-_i)(h \cdot x)\right] \xrightarrow[n \to \infty]{L^1_\nu} 0
\end{equation}
for all $1 \leq i \leq \ell$.
\end{proposition}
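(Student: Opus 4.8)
The plan is to delete from each $F_n$ a vanishing-density set of ``bad'' indices $h$ -- namely those for which the pair $(n,h)$ does not yet sit deep enough in the directed set $([\{F_n\}],\preccurlyeq)$ to guarantee $L^1_\nu$-proximity of $I_\mu(L_i\vert T^-_{n,h}(i))$ to its limit -- and to keep the rest as $F_{n,\nu}$. The point is that Lemma~\ref{lemBR} lets us do this while simultaneously pushing the ``depth parameter'' $\gamma(n)$ to infinity, so that on the surviving indices the approximation error is at most $1/\gamma(n)$ uniformly; once this is set up, the convergence is just the triangle inequality together with the $H$-invariance of $\nu$ (which gives $\|g(h\cdot\,)\|_\nu=\|g\|_\nu$ for $g\in L^1_\nu$, $h\in H$, since $(T_h)_*\nu=\nu$).

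Concretely, I would first unwind the definition of $I_\mu(L_i\vert T^-_{n,h}(i))\xrightarrow[{[\{F_n\}]}]{L^1_\nu}I_{\mu,\nu}(L_i\vert G^-_i)$: for each $k\in\N$ there is $(n_k,h_k)\in[\{F_n\}]$ such that $\|I_\mu(L_i\vert T^-_{n,h}(i))-I_{\mu,\nu}(L_i\vert G^-_i)\|_\nu<1/k$ for all $1\le i\le\ell$ whenever $(n,h)\succcurlyeq(n_k,h_k)$. Put $M_k:=F_{n_k}h_k^{-1}\in\F(H)$. I would then apply Lemma~\ref{lemBR} to the family $M_1,M_2,\dots$, but with the cosmetic strengthening that the thresholds $n(j)$ in its proof are also taken $\ge n_j$ (harmless, since the density condition $|F_n^{M_j}|/|F_n|\ge 1-1/j$ already holds for all large $n$). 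This yields an increasing $\gamma:\N\to\N$ with $\gamma(n)\to\infty$, $n\ge n_{\gamma(n)}$, and $|F_n^{M_{\gamma(n)}}|/|F_n|\ge 1-1/\gamma(n)$. Define $F_{n,\nu}:=F_n^{M_{\gamma(n)}}=\{h\in F_n:M_{\gamma(n)}h\subseteq T_n\}$; then $F_{n,\nu}\subseteq F_n\subseteq H$ and $|F_{n,\nu}|/|F_n|\to 1$.

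The key observation is that $h\in F_{n,\nu}$ forces $(n,h)\succcurlyeq(n_{\gamma(n)},h_{\gamma(n)})$: taking (w.l.o.g.) $e\in K$ so that $T_n\cap H=F_n$, the inclusion $M_{\gamma(n)}h\subseteq T_n$ reads $F_{n_{\gamma(n)}}h_{\gamma(n)}^{-1}\subseteq F_nh^{-1}$, whence $T^-_{n_{\gamma(n)},h_{\gamma(n)}}(i)=KF_{n_{\gamma(n)}}h_{\gamma(n)}^{-1}\cap G^-_i\subseteq KF_nh^{-1}\cap G^-_i=T^-_{n,h}(i)$ for every $i$, and $n\ge n_{\gamma(n)}$ handles the first coordinate. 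Hence $\|I_\mu(L_i\vert T^-_{n,h}(i))-I_{\mu,\nu}(L_i\vert G^-_i)\|_\nu<1/\gamma(n)$ for all $h\in F_{n,\nu}$ and all $i$.

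Finally I would write, for fixed $i$,
\[
\sum_{h\in F_{n,\nu}}I_\mu(L_i\vert T^-_{n,h}(i))(h\cdot x)-\sum_{h\in F_n}I_{\mu,\nu}(L_i\vert G^-_i)(h\cdot x)
\]
as $\sum_{h\in F_{n,\nu}}\bigl(I_\mu(L_i\vert T^-_{n,h}(i))-I_{\mu,\nu}(L_i\vert G^-_i)\bigr)(h\cdot x)$ minus $\sum_{h\in F_n\setminus F_{n,\nu}}I_{\mu,\nu}(L_i\vert G^-_i)(h\cdot x)$, multiply by $|F_n|^{-1}$, take $\|\cdot\|_\nu$, and bound by the triangle inequality plus $H$-invariance of $\nu$: the first piece is at most $|F_n|^{-1}|F_{n,\nu}|\cdot\gamma(n)^{-1}\le\gamma(n)^{-1}\to 0$, and the second is at most $(|F_n\setminus F_{n,\nu}|/|F_n|)\,\|I_{\mu,\nu}(L_i\vert G^-_i)\|_\nu\to 0$ since $I_{\mu,\nu}(L_i\vert G^-_i)\in L^1_\nu$. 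Summing the two estimates gives the claimed $L^1_\nu$ convergence for every $i$. The only genuinely delicate point is the simultaneous construction of $\gamma$ fulfilling its three requirements -- $\gamma(n)\to\infty$, $|F_n^{M_{\gamma(n)}}|/|F_n|\to 1$, and $n\ge n_{\gamma(n)}$ -- which is exactly (a slight refinement of) Lemma~\ref{lemBR}; everything after that is routine.
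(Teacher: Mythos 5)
Your proof is correct and follows essentially the same route as the paper's: extract a sequence $(n_m,h_m)$ from the net convergence, feed $M_m=F_{n_m}h_m^{-1}$ into Lemma~\ref{lemBR} to define $F_{n,\nu}=F_n^{M_{\gamma(n)}}$, and split the difference into the sum over $F_{n,\nu}$ (controlled by $1/\gamma(n)$ via $H$-invariance) plus the tail over $F_n\setminus F_{n,\nu}$ (controlled by $\|I_{\mu,\nu}(L_i\vert G^-_i)\|_\nu$ times the vanishing density). Your explicit insistence that $\gamma$ also satisfy $n\ge n_{\gamma(n)}$, so that $h\in F_{n,\nu}$ genuinely forces $(n,h)\succcurlyeq(n_{\gamma(n)},h_{\gamma(n)})$ in both coordinates of the order, is a point of care the paper leaves implicit.
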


\begin{proof}
Since $I_\mu(L_i \vert T^-_{n,h}(i)) \xrightarrow[{[\{F_n\}]}]{L^1_\nu} I_{\mu,\nu}(L_i \vert G^-_i)$ for all $1 \leq i \leq \ell$ and $[\{F_n\}]$ is directed, we can find an increasing sequence $(n_m,h_m) \in [\{F_n\}]$ such that for every $(n,h) \succcurlyeq (n_m,h_m)$,
\begin{eqnarray}
\left\|I_\mu(L_i \vert T^-_{n,h}(i)) - I_{\mu,\nu}(L_i \vert G^-_i)\right\|_{\nu} < \frac{1}{m}	&	\text{for all $1 \leq i \leq \ell$.}
\end{eqnarray}

Now, consider the sequence of finite subsets of $H$ defined as $M_m = F_{n_m}h_m^{-1},$ and let $\gamma$ be the (increasing) function provided by Lemma \ref{lemBR}. Set $F_{n,\nu}$ to be $F_n^{M_{\gamma(n)}}$, so $\frac{|F_{n,\nu}|}{|F_n|} \geq 1-\frac{1}{\gamma(n)}$.

It can be seen that for $n \in \N$ and $h \in F_{n,\nu}$, we have
\begin{eqnarray}
\left\|I_\mu(L_i \vert T^-_{n,h}(i)) - I_{\mu,\nu}(L_i \vert G^-_i)\right\|_{\nu} < \frac{1}{\gamma(n)}	&	\text{for all $1 \leq i \leq \ell$.}
\end{eqnarray}

Therefore, by $H$-invariance of $\nu$, for any $1 \leq i \leq \ell$,
\begin{align}
\left\|\sum_{h \in F_{n,\nu}} \left(I_\mu(L_i \vert T^-_{n,h}(i)) - I_{\mu,\nu}(L_i \vert G^-_i)\right)(h \cdot x)\right\|_{\nu}				\\
\leq \sum_{h \in F_{n,\nu}}\|I_\mu(L_i \vert T^-_{n,h}(i)) - I_{\mu,\nu}(L_i \vert G^-_i)\|_{\nu}	 \leq \frac{|F_{n,\nu}|}{\gamma(n)}.
\end{align}

In addition, since $I_{\mu,\nu}(L_i \vert G^-_i) \in L^1_\nu$ and $I_{\mu,\nu}(L_i \vert G^-_i)(x) \geq 0$ $\nu(x)$-a.e., by $H$-invariance of $\nu$,
\begin{align}
\left\|\sum_{h \in F_n \setminus F_{n,\nu}} I_{\mu,\nu}(L_i \vert G^-_i)(h \cdot x)\right\|_{\nu}	&	= \sum_{h \in F_n \setminus F_{n,\nu}} \left\|I_{\mu,\nu}(L_i \vert G^-_i)(h \cdot x)\right\|_{\nu}	\\
																		&	= |F_n \setminus F_{n,\nu}|\left\|I_{\mu,\nu}(L_i \vert G^-_i)\right\|_{\nu}.
\end{align}

Therefore, we conclude by observing that
\begin{align}
\left\|\sum_{h \in F_{n,\nu}} I_\mu(L_i \vert T^-_{n,h}(i))(h \cdot x) - \sum_{h \in F_n} I_{\mu,\nu}(L_i \vert G^-_i)(h \cdot x)\right\|_{\nu}	\\
\leq \frac{|F_{n,\nu}|}{\gamma(n)} + |F_n \setminus F_{n,\nu}||\left\|I_{\mu,\nu}(L_i \vert G^-_i)\right\|_{\nu},
\end{align}
and using $\|I_{\mu,\nu}(L_i \vert G^-_i)\|_\nu < \infty$, $|F_{n,\nu}| \leq |F_n|$, and $\lim_n \frac{|F_n \setminus F_{n,\nu}|}{|F_n|} = \lim_n \frac{1}{\gamma(n)} = 0$.
\end{proof}

\begin{remark}
Notice that
\begin{eqnarray}
		&	\sup_n \max_{h \in F_n \setminus F_{n,\nu}} \left\|I_\mu(L_i \vert T^-_{n,h}(i))\right\|_\nu < \infty						\\	
\iff		&	\limsup_n \max_{h \in F_n \setminus F_{n,\nu}} \|I_\mu(L_i \vert T^-_{n,h}(i))\|_{\nu} < \infty						\\
\implies	& 	|F_n|^{-1} \sum_{h \in F_n \setminus F_{n,\nu}} \|I_\mu(L_i \vert T^-_{n,h}(i))\|_{\nu}  \xrightarrow[n \to \infty]{} 0			\\
\iff		&	|F_n|^{-1} \sum_{h \in F_n \setminus F_{n,\nu}} I_\mu(L_i \vert T^-_{n,h}(i))(h \cdot x)  \xrightarrow[n \to \infty]{L^1_\nu} 0.
\end{eqnarray}
\end{remark}

\subsection{Main theorem}

Now, combining the previous results from this section, we are in position to state the main theorem of this work.

\begin{theorem}
\label{pressureRep}
Consider
\begin{itemize}
\item $G$ a countable discrete group,
\item $(H,\leqslant)$ an amenable linearly right-ordered subgroup $H \leq G$ of index $[G:H] < \infty$,
\item $K = \{k_1,\dots,k_d\} \subseteq G$ a left transversal so that $d = [G:H]$ and $G = KH$,
\item A partition $K = L_1 \sqcup \cdots \sqcup L_\ell$ with $L_i \neq \emptyset$ and $1 \leq \ell \leq d$,
\item $X$ a nonempty $G$-subshift,
\item $\{F_n\}$ a left F{\o}lner sequence for $H$ such that $(X,\{KF_n\})$ satisfies condition (D),
\item $\Phi$ an (absolutely summable) potential such that $\mathcal{G}(X,\Phi) \neq \emptyset$,
\item $\mu \in \gibMeas \subseteq \invMeasG$, and $\nu \in \invMeasH$.
\end{itemize}

For all $1 \leq i \leq \ell$, denote
\begin{itemize}
\item $K_i = L_1 \sqcup \cdots \sqcup L_i$,
\item $G^-_i = L_iH^- \sqcup K_{i-1}H$, where $H^- = \{h \in H: h < e\}$,
\item $T_{n,h}^-(i) = T_nh^{-1} \cap G^-_i$, where $T_n = KF_n$,
\item $I_\mu(L_i \vert T^-_{n,h}(i))(x) = -\log\mu([x_{L_i}] \vert [x_{T_{n,h}^-(i)}])$, and
\item $\varphi_K(x) = \sum_{i=1}^d \varphi_i(x)$, where $\varphi_i(x) = \varphi(k_i \cdot x)$.
\end{itemize}

Suppose that
\begin{eqnarray}
I_\mu(L_i \vert T^-_{n,h}(i)) \xrightarrow[{[\{F_n\}]}]{L^1_\nu} I_{\mu,\nu}(L_i \vert G^-_i)	&	\text{for all } 1 \leq i \leq \ell.
\end{eqnarray}

If
\begin{eqnarray}
|F_n|^{-1} \sum_{h \in F_n \setminus F_{n,\nu}} \|I_\mu(L_i \vert T^-_{n,h}(i))\|_{\nu}  \xrightarrow[n \to \infty]{} 0	&	\text{for all $1 \leq i \leq \ell$},
\end{eqnarray}
then
\begin{eqnarray}
\mathrm{P}(\Phi)	&	=						&	\frac{1}{[G:H]} \sum_{i=1}^\ell \int{\left(I_{\mu,\nu}(L_i \vert G^-_i) + \frac{1}{\ell} \varphi_K\right)}d\nu	\\
		&	 \stackrel{\nu(x)\text{-a.e.}}{=}	&	\frac{1}{[G:H]} \sum_{i=1}^\ell \E_\nu\left[I_{\mu,\nu}(L_i \vert G^-_i) + \frac{1}{\ell} \varphi_K \middle\vert \invAlg_H\right](x).
\end{eqnarray}

On the other hand, if $\ell = 1$, we also have the following converse:
\begin{equation}
|F_n|^{-1} \sum_{h \in F_n \setminus F_{n,\nu}} \|I_\mu(K \vert T^-_{n,h})\|_{\nu}  \xrightarrow[n \to \infty]{} 0
\end{equation}
if and only if
\begin{equation}
\mathrm{P}(\Phi) \stackrel{\nu(x)\text{-a.e.}}{=} \E_\nu[I_{\mu,\nu}(K \vert G^-) + \varphi_K \vert \invAlg_H](x).
\end{equation}

In particular, if $\ell = 1$ and $\nu \in \ergMeasH$, then
\begin{equation}
|F_n|^{-1} \sum_{h \in F_n \setminus F_{n,\nu}} \|I_\mu(K \vert T_{n,h}^-)\|_{\nu}  \xrightarrow[n \to \infty]{} 0 \iff \mathrm{P}(\Phi) = \int{(I_{\mu,\nu}(K \vert G^-) + \varphi_K)}d\nu.
\end{equation}
\end{theorem}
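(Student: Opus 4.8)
The plan is to read Theorem~\ref{pressureRep} as the assembly of the three ingredients built up in Section~\ref{sec4}: the sequential decomposition~(\ref{formula}) of the SMB ratio, Proposition~\ref{prop2}, and Proposition~\ref{mainProp}, with the second hypothesis of the theorem serving precisely to splice the first two onto the third. First I would set $f_i := I_{\mu,\nu}(L_i \vert G^-_i)$ for $1 \le i \le \ell$; these lie in $L^1_\nu$ and are nonnegative $\nu$-a.e. by the discussion preceding Proposition~\ref{prop2}. Applying Proposition~\ref{prop2} with the standing hypothesis $I_\mu(L_i \vert T^-_{n,h}(i)) \xrightarrow[{[\{F_n\}]}]{L^1_\nu} I_{\mu,\nu}(L_i \vert G^-_i)$ produces a single sequence $\{F_{n,\nu}\}$, valid simultaneously for all $i$, with $|F_{n,\nu}|/|F_n| \to 1$ and
\[
|F_n|^{-1}\Bigl[\sum_{h \in F_{n,\nu}} I_\mu(L_i \vert T^-_{n,h}(i))(h \cdot x) - \sum_{h \in F_n} f_i(h \cdot x)\Bigr] \xrightarrow[n \to \infty]{L^1_\nu} 0,
\]
and it is to this sequence that the theorem's second hypothesis refers.

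Next I would dispose of the deleted translates. Since every $I_\mu(L_i \vert T^-_{n,h}(i)) \ge 0$ and $\nu$ is $H$-invariant, the $L^1_\nu$-norm of $|F_n|^{-1}\sum_{h \in F_n \setminus F_{n,\nu}} I_\mu(L_i \vert T^-_{n,h}(i))(h \cdot x)$ equals $|F_n|^{-1}\sum_{h \in F_n \setminus F_{n,\nu}} \|I_\mu(L_i \vert T^-_{n,h}(i))\|_\nu$ (the $L^1_\nu$-norm of a finite sum of nonnegative functions is the sum of the norms, each unchanged under the translation $h\in H$); this is the content of the Remark after Proposition~\ref{prop2}. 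Hence the theorem's hypothesis $|F_n|^{-1}\sum_{h \in F_n \setminus F_{n,\nu}} \|I_\mu(L_i \vert T^-_{n,h}(i))\|_\nu \to 0$ is equivalent to the $L^1_\nu$-convergence to $0$ of that averaged deleted sum. Adding this to the previous display and splitting $\sum_{h \in F_n} = \sum_{h \in F_{n,\nu}} + \sum_{h \in F_n \setminus F_{n,\nu}}$ gives
\[
|F_n|^{-1}\sum_{h \in F_n} \bigl(I_\mu(L_i \vert T^-_{n,h}(i)) - f_i\bigr)(h \cdot x) \xrightarrow[n \to \infty]{L^1_\nu} 0 \qquad (1 \le i \le \ell),
\]
which is exactly hypothesis~(\ref{hypothesis}) of Proposition~\ref{mainProp}. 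Since $(X,\{KF_n\})$ satisfies condition~(D) and $\mu \in \gibMeas$, Corollary~\ref{corsupp} gives $\supp(\mu) = X \supseteq \supp(\nu)$, so Proposition~\ref{mainProp} applies with $f_i = I_{\mu,\nu}(L_i \vert G^-_i)$ and yields both displayed pressure formulas — the first assertion of the theorem.

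For the $\ell = 1$ converse, one direction is the $\ell=1$ case just proved. For the other, suppose $\mathrm{P}(\Phi) \stackrel{\nu(x)\text{-a.e.}}{=} \E_\nu[I_{\mu,\nu}(K \vert G^-) + \varphi_K \vert \invAlg_H](x)$. By the converse half of Proposition~\ref{mainProp} (with $f = I_{\mu,\nu}(K \vert G^-)$) this is equivalent to $|F_n|^{-1}\sum_{h \in F_n}(I_\mu(K \vert T^-_{n,h}) - f)(h \cdot x) \xrightarrow{L^1_\nu} 0$; adding the mean ergodic average $|F_n|^{-1}\sum_{h\in F_n} f(h\cdot x) \xrightarrow{L^1_\nu} \E_\nu[f\vert\invAlg_H]$ (Theorem~\ref{ergodicThm}) turns this into $|F_n|^{-1}\sum_{h \in F_n} I_\mu(K \vert T^-_{n,h})(h \cdot x) \xrightarrow{L^1_\nu} \E_\nu[f \vert \invAlg_H]$. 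On the other hand, Proposition~\ref{prop2} (applicable because $[\{F_n\}]$-convergence is assumed) combined again with Theorem~\ref{ergodicThm} gives $|F_n|^{-1}\sum_{h \in F_{n,\nu}} I_\mu(K \vert T^-_{n,h})(h \cdot x) \xrightarrow{L^1_\nu} \E_\nu[f \vert \invAlg_H]$. Subtracting the two and invoking the nonnegativity/$H$-invariance identity of the second paragraph in the reverse direction yields $|F_n|^{-1}\sum_{h \in F_n \setminus F_{n,\nu}} \|I_\mu(K \vert T^-_{n,h})\|_\nu \to 0$. Finally, when $\nu \in \ergMeasH$ the $\sigma$-algebra $\invAlg_H$ is trivial, so $\E_\nu[\,\cdot\, \vert \invAlg_H] = \int \,\cdot\, d\nu$ and the $\nu$-a.e. identity collapses to the stated scalar equation.

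I do not expect a deep obstruction here, since the theorem is in essence a repackaging of Propositions~\ref{prop2} and~\ref{mainProp}; the point requiring the most care is the equivalence used twice above, namely that for nonnegative information functions and $H$-invariant $\nu$ one has $\bigl\||F_n|^{-1}\sum_{h \in F_n \setminus F_{n,\nu}} I_\mu(L_i \vert T^-_{n,h}(i))(h \cdot x)\bigr\|_\nu = |F_n|^{-1}\sum_{h \in F_n \setminus F_{n,\nu}} \|I_\mu(L_i \vert T^-_{n,h}(i))\|_\nu$. This is exactly what makes the deleted-translates hypothesis both sufficient (for the formula) and, in the $\ell=1$ ergodic case, necessary. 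One should also keep track that Proposition~\ref{prop2} delivers one deletion sequence $\{F_{n,\nu}\}$ valid for all $i$ at once, so a single deletion appears in every instance of the second hypothesis.
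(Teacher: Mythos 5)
Your proposal is correct and follows essentially the same route as the paper's own proof: identify $f_i = I_{\mu,\nu}(L_i \vert G^-_i)$, use Proposition~\ref{prop2} to handle the sum over $F_{n,\nu}$, use the deleted-translates hypothesis (via the equivalence in the remark after Proposition~\ref{prop2}) to restore the full sum over $F_n$, and then invoke Proposition~\ref{mainProp} together with triviality of $\invAlg_H$ in the ergodic case. Your write-up is in fact somewhat more explicit than the paper's (which compresses the converse direction into ``everything follows from Proposition~\ref{mainProp}''), and your observation that Corollary~\ref{corsupp} supplies the support hypothesis $\supp(\nu) \subseteq \supp(\mu)$ needed for Proposition~\ref{propIFF} is a useful extra detail.
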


\begin{proof}
By Proposition \ref{prop2}, if $I_\mu(L_i \vert T^-_{n,h}(i)) \xrightarrow[{[\{F_n\}]}]{L^1_\nu} I_{\mu,\nu}(L_i \vert G^-_i)$, then 
\begin{equation}
|F_n|^{-1} \left[\sum_{h \in F_{n,\nu}} I_\mu(L_i \vert T^-_{n,h}(i))(h \cdot x) - \sum_{h \in F_n} I_{\mu,\nu}(L_i \vert G^-_i)(h \cdot x)\right] \xrightarrow[n \to \infty]{L^1_\nu} 0.
\end{equation}

Combining this with $|F_n|^{-1} \sum_{h \in F_n \setminus F_{n,\nu}} \|I_\mu(L_i \vert T^-_{n,h}(i))\|_{\nu}  \xrightarrow[n \to \infty]{} 0$, we obtain
\begin{equation}
|F_n|^{-1} \sum_{h \in F_n} \left[I_\mu(L_i \vert T^-_{n,h}(i)) - I_{\mu,\nu}(L_i \vert G^-_i)\right](h \cdot x) \xrightarrow[n \to \infty]{L^1_\nu} 0.
\end{equation}

Then, if we identify $I_{\mu,\nu}(L_i \vert G^-_i)$ with $f_i$, everything follows from Proposition \ref{mainProp} and the fact that if $\nu \in \ergMeasH$, then $\invAlg_H$ is trivial for $\nu$.
\end{proof}

\begin{remark}
It is easy to check that when $H = G$ and $K = \{e\}$, then we can recover from Theorem \ref{pressureRep} an analogous result to the ones that appear in \cite{1-gamarnik,1-marcus,1-briceno}.
\end{remark}

\begin{question}
Is there an example where, under the other assumptions of Theorem \ref{pressureRep}, $|F_n|^{-1} \sum_{h \in F_n \setminus F_{n,\nu}} \|I_\mu(L_i \vert T^-_{n,h}(i))\|_{\nu}  \xrightarrow[n \to \infty]{} 0$ does not hold? In other words, is this assumption redundant?
\end{question}

\section{Discussion}
\label{sec5}

\subsection{Comparison with previous results}
\label{sec51}

Let $G =\Z^d$ for some $d \in \N$ endowed with the \emph{lexicographic order} and consider the directed set $(\Theta,\preccurlyeq)$, where $\Theta = \{M \in \F(G): M \subseteq G^-\}$ and 
\begin{equation}
M_1 \preccurlyeq M_2 \iff \left[\text{for all $n \in \N$, } [-n,n]^d \cap G^- \subseteq M_1 \implies [-n,n]^d \cap G^- \subseteq M_2\right].
\end{equation}

Notice that $\Theta$ is uncountable but it has a \emph{cofinal sequence} exhausting $G^-$, namely $\{M_n\}$ for $M_n := [-n,n]^d \cap G^-$. We say that a potential $\Phi$ is {\bf nearest-neighbor} if
\begin{equation}
\Phi(M,x) \neq 0 \text{ for } x \in X \implies M = \{h,h+s_i\}
\end{equation}
for some $h \in \Z^d$ and $1 \leq i \leq d$, where $s_i$ is the $i$th canonical vector which is $0$ in every but the $i$th coordinate where is $1$. This notion can be naturally extended to any group $G$ with a \emph{generating set} $\{s_1,s_2,\dots\}$.

Now we rephrase Theorem 3.1 from \cite{1-marcus} in the language of this paper. 

\begin{theorem}[{\cite[Theorem 3.1]{1-marcus}}]
\label{thmMarcus}
Given a nonempty SFT $X \subseteq S^{\Z^d}$ and a nearest-neighbor potential $\Phi$, consider $\mu \in \gibMeas$, $\nu \in \invMeasG$, and $\{T_n\}$ a F{\o}lner sequence such that
\begin{enumerate}
\item[(A1)] $(X,\{T_n\})$ satisfies condition (D),
\item[(A2)] $\lim_{M \to G^-} \mu([x_e] \vert [x_M])$ exists uniformly over $x \in \supp(\nu)$, and
\item[(A3)] $c_{\mu,\nu} := \inf_{x \in \supp(\nu)}\inf_{M \in \Theta} \mu([x_e] \vert [x_M]) > 0$.
\end{enumerate}

Then $\mathrm{P}(\Phi) = \int{\left(I_{\mu}(e \vert G^-) + \varphi\right)}d\nu$.
\end{theorem}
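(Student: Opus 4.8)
The plan is to read Theorem~\ref{thmMarcus} as the special case of Theorem~\ref{pressureRep} with $H = G$, transversal $K = \{e\}$ (so $[G:H] = 1$ and $\varphi_K = \varphi$), and $\ell = 1$, $L_1 = \{e\}$; then $G^-_1 = G^-$, $T^-_{n,h}(1) = T_n h^{-1}\cap G^-$ (abbreviated $T^-_{n,h}$), and the entire task is to derive the mean-sense hypotheses of Theorem~\ref{pressureRep} from (A1)--(A3). Three of its inputs are immediate: $\mathcal{G}(X,\Phi)\neq\emptyset$ since $X$ is an SFT; (A1) is condition (D); and Corollary~\ref{corsupp} gives $\supp(\mu)=X\supseteq\supp(\nu)$, so every conditional probability $\mu([x_e]\mid[x_M])$ below is defined for all $x$, and $\nu(\supp(\nu))=1$.

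Second, I would verify the net-convergence hypothesis $I_\mu(K\mid T^-_{n,h})\xrightarrow[{[\{F_n\}]}]{L^1_\nu} I_{\mu,\nu}(K\mid G^-)$. The order $\preccurlyeq$ on $\Theta$ is controlled by which finite cubes $[-m,m]^d\cap G^-$ a set contains, and Lemma~\ref{lemBR} (with $H=G$) shows that for any fixed such cube $N$ and all large $n$ there is $h\in F_n$ with $Nh\subseteq T_n$, i.e.\ $N\subseteq T^-_{n,h}$; since such a containment then persists upward in $[\{F_n\}]$, the net $\{T^-_{n,h}\}_{(n,h)\in[\{F_n\}]}$ is eventually dominant in $\Theta$. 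Hence by (A2) the net $\mu([x_e]\mid[x_{T^-_{n,h}}])$ converges uniformly over $x\in\supp(\nu)$ to some $L(x)$, while (A3) gives $c_{\mu,\nu}\le \mu([x_e]\mid[x_{T^-_{n,h}}])\le 1$ and hence $L\ge c_{\mu,\nu}$; since $-\log$ is Lipschitz on $[c_{\mu,\nu},1]$, the net $I_\mu(K\mid T^-_{n,h})=-\log\mu([x_e]\mid[x_{T^-_{n,h}}])$ converges uniformly over $\supp(\nu)$ to $I_{\mu,\nu}(K\mid G^-):=-\log L$. As $\nu(\supp(\nu))=1$, uniform convergence over $\supp(\nu)$ upgrades to $L^1_\nu$ convergence, which is exactly the required hypothesis.

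Third, the remaining hypothesis of Theorem~\ref{pressureRep} is handled by the same bound: (A3) yields $0\le I_\mu(K\mid T^-_{n,h})(x)\le -\log c_{\mu,\nu}$ for $\nu$-a.e.\ $x$ and every $(n,h)$, so $\|I_\mu(K\mid T^-_{n,h})\|_\nu\le -\log c_{\mu,\nu}$, and since the sequence $\{F_{n,\nu}\}$ from Proposition~\ref{prop2} satisfies $|F_{n,\nu}|/|F_n|\to1$ we get $|F_n|^{-1}\sum_{h\in F_n\setminus F_{n,\nu}}\|I_\mu(K\mid T^-_{n,h})\|_\nu\le(-\log c_{\mu,\nu})\,|F_n\setminus F_{n,\nu}|/|F_n|\to0$. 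Applying Theorem~\ref{pressureRep} with $\ell=1$ then gives $\mathrm{P}(\Phi)=\int(I_{\mu,\nu}(K\mid G^-)+\varphi)\,d\nu$. To match the stated form, one identifies $I_{\mu,\nu}(K\mid G^-)$ with $I_\mu(e\mid G^-)$: along any cofinal sequence $M_n\nearrow G^-$ in $\Theta$, reverse martingale convergence gives $\mu([x_e]\mid[x_{M_n}])\to\mu([x_e]\mid\mathcal{B}_{G^-})(x)$ for $\mu$-a.e.\ $x$, so $-\log L$ is precisely the version of $I_\mu(e\mid G^-)$ that, by (A2), is well defined and continuous on $\supp(\nu)$; with that reading the conclusion is $\mathrm{P}(\Phi)=\int(I_\mu(e\mid G^-)+\varphi)\,d\nu$.

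The main obstacle is not a deep one: the argument is essentially a translation exercise showing that the uniform hypotheses (A2)--(A3) of \cite{1-marcus} are strictly stronger than the $L^1_\nu$ and net hypotheses of Theorem~\ref{pressureRep}. The two points that need genuine care are (i) matching the two directed sets --- verifying that the finitely supported ``windows'' $T^-_{n,h}$ become dominant in $(\Theta,\preccurlyeq)$, so that the $\Theta$-limit in (A2) feeds the $[\{F_n\}]$-net convergence that Theorem~\ref{pressureRep} requires --- and (ii) the bookkeeping around $-\log$ and the lower bound $c_{\mu,\nu}$ needed both to pass from uniform to $L^1_\nu$ convergence and to control the $F_n\setminus F_{n,\nu}$ tail, together with the (standard, martingale-theoretic) identification of the limiting version of the information function with $I_\mu(e\mid G^-)$.
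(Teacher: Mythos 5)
Your proposal is correct and follows essentially the same route as the paper's own argument in Section~\ref{sec51}: specialize Theorem~\ref{pressureRep} to $H=G$, $K=\{e\}$, $\ell=1$, use (A2)--(A3) together with the lower bound $c_{\mu,\nu}$ to upgrade uniform convergence of $\mu([x_e]\mid[x_M])$ on $\supp(\nu)$ to the required $L^1_\nu$ net convergence of $I_\mu(e\mid T^-_{n,h})$ and to control the $F_n\setminus F_{n,\nu}$ tail. Your explicit verification via Lemma~\ref{lemBR} that the sets $T^-_{n,h}$ are cofinal in $\Theta$, and the martingale-theoretic identification of the limit with (the continuous version of) $I_\mu(e\mid G^-)$, only make explicit steps the paper leaves implicit.
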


Condition (A2) means that for all $\epsilon > 0$, there exists $n \in \N$ such that
\begin{equation}
\label{ineq}
\left|\mu([x_e] \vert [x_{M_1}]) - \mu([x_e] \vert [x_{M_2}])\right| < \epsilon
\end{equation}
for all $x \in \supp(\nu)$ and for all $[-n,n]^d \cap G^- \subseteq M_1,M_2 \subseteq G^-$. This is equivalent to say that for all $\epsilon > 0$ there exists $M_0 \in \Theta$ such that Eq. (\ref{ineq}) holds for all $M_1,M_2 \succcurlyeq M_0$ and $x \in \supp(\nu)$.

This theorem was a generalization of \cite[Theorem 1]{1-gamarnik}. We claim that Theorem \ref{pressureRep} is a generalization of Theorem \ref{thmMarcus} (i.e., \cite[Theorem 3.1]{1-marcus}). Indeed, $\Z^d$ is a countable discrete amenable linearly bi-ordered group if we consider $\leq$ the lexicographic order and $\{[-n,n]^d\}$ as a F{\o}lner sequence. Now, given a nonempty SFT $X$ and a nearest-neighbor (and therefore, absolutely summable) potential $\Phi$, it is always the case that $\mathcal{G}(X,\Phi) \neq \emptyset$ (see \cite{1-ruelle}) and, due to Corollary \ref{corsupp}, $\supp(\mu) = X$ for all $\mu \in \mathcal{G}(X,\Phi)$.

Given $M \in \Theta$, define $f_M: X \to \R$ as $f_M(x) := \mu([x_e] \vert [x_M])$ for $x \in X$. Notice that $f_M$ is a \emph{local function} (i.e., it depends on finitely many coordinates), it is defined everywhere on $\supp(\mu) = X$, and therefore it is continuous.

If $Y := \supp(\nu) \subseteq X$, condition (A2) implies that $\{f_M\}_{M \in \Theta}$ converges uniformly (in the net sense) on $Y$ to some function $f_\infty: Y \to \R$ and condition (A3) implies that $f_M(x) \geq c_{\mu,\nu} > 0$ for all $x \in Y$ and $M \in \Theta$. Consequently, each element in $\{-\log \left.f_M\right|_Y\}_{M \in \Theta}$ is also continuous and, due to the Mean Value Theorem, $-\log f_M$ converges uniformly on $Y$ to $-\log f_\infty$. Therefore, for any F{\o}lner sequence $\{T_n\}$, since $T_{n,h}^- = T_nh^{-1} \cap G^- \in \Theta$ for all $n \in \N$ and $h \in T_n$, it can be checked that
\begin{equation}
I_\mu(e \vert T_{n,h}^-) = -\log f_{T_{n,h}^-} \xrightarrow[{[\{T_n\}]}]{L^1_\nu} -\log f_\infty = I_{\mu}(e \vert G^-) =: I_{\mu,\nu}(e \vert G^-).
\end{equation}

On the other hand, $\|I_\mu(e \vert T_{n,h}^-)\|_{\nu} = \|-\log f_{T_{n,h}^-}\|_{\nu} \leq  \|-\log f_{T_{n,h}^-}\|_\infty \leq \log(1/c_{\mu,\nu})$, so $\sup_n \max_{h \in T_n \setminus T_{n,\nu}} \|I_\mu(e \vert T_{n,h}^-)\|_{\nu} < \infty$ and therefore,
\begin{equation}
|T_n|^{-1} \sum_{h \in T_n \setminus T_{n,\nu}} \|I_\mu(e \vert T^-_{n,h})\|_{\nu}  \xrightarrow[n \to \infty]{} 0.
\end{equation}

It can be noticed that in Theorem \ref{pressureRep} no continuity of $I_{\mu,G^-}$ is required nor uniform convergence in $\supp(\nu)$ necessary, in contrast to Theorem \ref{thmMarcus}. Another advantage is that convergence and bounds in Theorem \ref{pressureRep} are all along a particular directed set of subsets of the past and not all of them.

\subsection{Some implications and examples}
\label{examples}

In \cite{1-marcus}, it is shown that Condition (A2) from Theorem \ref{thmMarcus} holds for any nearest-neighbor potential $\Phi$ such that the corresponding Gibbs $(X,\Phi)$-specification satisfies a form of correlation decay called \emph{strong spatial mixing (SSM)}, also known as \emph{regular complete analyticity}; see \cite{2-dobrushin} for a good survey on this subject. Although the definition of strong spatial mixing is usually stated for nearest-neighbor (or \emph{finite range}) potentials or \emph{Markov random fields} (e.g., see \cite[Definition 2.11]{1-marcus}), it can be naturally extended to absolutely summable potentials (e.g., see \cite{1-laroche} and the infinite range \emph{Ising model} case). On the other hand, in \cite{1-briceno}, it is proven that Conditions (A1) and (A3) from Theorem \ref{thmMarcus} hold whenever the $\Z^d$-subshift $X$ satisfies a property called \emph{topological strong spatial mixing (TSSM)}, also introduced in \cite{1-briceno}.

These definitions and results were originally developed for $\Z^d$-subshifts. In the general group $G$ case, what we mostly have to take into consideration is that the definition of SSM and TSSM rely on a metric on $G$, which can be naturally taken to be the \emph{word metric} for a given generating set (and coincides with the graph metric in the corresponding \emph{Cayley graph}). Then, the definitions and most results can be naturally extended to $G$-subshifts on finitely generated groups $G$ and, more generally, to any closed set of \emph{colorings} of a countable graph (see \cite[Definition 4.2 and Definition 4.6]{2-briceno}).

Then, in the context of \emph{finitely generated} groups $G$ (i.e., when the generating set can be chosen to be finite), our results can be easily proven to hold for any $G$-subshift $X$ that satisfies TSSM (which, in particular, implies $X$ is an SFT) and any absolutely summable potential $\Phi$ that induces a Gibbs $(X,\Phi)$-specification satisfying SSM (which, in particular, implies there exists a unique Gibbs measure in $\gibMeas$). Consequently, based on all the discussion here and in Section \ref{sec51}, it is possible to prove the following corollary.

\begin{corollary}
\label{corPress}
Consider
\begin{itemize}
\item $G$ a countable discrete finitely generated group,
\item $(H,\leqslant)$ an amenable linearly right-ordered subgroup $H \leq G$ of finite index,
\item $K \subseteq G$ a left transversal so that $G = KH$,
\item $X$ a nonempty $G$-subshift that satisfies TSSM,
\item $\{F_n\}$ a left F{\o}lner sequence for $H$ such that $(X,\{KF_n\})$ satisfies condition (D), and
\item $\Phi$ a potential such that the Gibbs $(X,\Phi)$-specification satisfies SSM.
\end{itemize}

Then, for the unique Gibbs measure $\mu \in \gibMeas$, we have that
\begin{equation}
\mathrm{P}(\Phi) = \int{(I_\mu(K \vert G^-) + \varphi_K)}d\nu
\end{equation}
for any $\nu \in \invMeasH$.
\end{corollary}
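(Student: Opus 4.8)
The plan is to derive Corollary~\ref{corPress} as the case $\ell = 1$ of Theorem~\ref{pressureRep} --- so that $L_1 = K$, $K_0 = \emptyset$, and $G^- := G^-_1 = KH^-$ --- taken with $\nu$ an arbitrary measure in $\invMeasH$. I would check the two standing hypotheses of that theorem, namely the net convergence $I_\mu(K \vert T^-_{n,h}) \to I_{\mu,\nu}(K \vert G^-)$ in $L^1_\nu$ along $[\{F_n\}]$ and $|F_n|^{-1} \sum_{h \in F_n \setminus F_{n,\nu}} \|I_\mu(K \vert T^-_{n,h})\|_\nu \to 0$; the $\ell = 1$ assertion of Theorem~\ref{pressureRep} then gives $\mathrm{P}(\Phi) \stackrel{\nu\text{-a.e.}}{=} \E_\nu[I_{\mu,\nu}(K \vert G^-) + \varphi_K \vert \invAlg_H]$, and integrating against $\nu$ --- using that $\mathrm{P}(\Phi)$ is constant and that $\invAlg_H$-conditional expectations integrate to the plain integral --- will yield the stated identity. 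The verification mirrors the comparison of Section~\ref{sec51} for $\Z^d$; the only genuinely new point is that I must transport two consequences of correlation decay to a general finitely generated $G$ by replacing the Euclidean metric with the word metric: \emph{(i)} TSSM of $X$ should provide a uniform lower bound $c_\mu := \inf_{x \in X}\inf_{M}\mu([x_K] \vert [x_M]) > 0$, the infimum over finite $M \subseteq G^-$ (the analogue of condition (A3) in Theorem~\ref{thmMarcus}, cf.\ \cite{1-briceno}); \emph{(ii)} SSM of the Gibbs $(X,\Phi)$-specification should provide uniform net-Cauchyness: for every $\epsilon > 0$ a finite $M_0 \subseteq G^-$ with $|\mu([x_K] \vert [x_{M_1}]) - \mu([x_K] \vert [x_{M_2}])| < \epsilon$ for all $x \in X$ and all finite $M_1,M_2$ satisfying $M_0 \subseteq M_1, M_2 \subseteq G^-$ (the analogue of (A2), cf.\ \cite{1-marcus}).

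Next I would set things up: TSSM implies $X$ is an SFT (again via the word metric, cf.\ \cite{1-briceno}), hence $\gibMeas \neq \emptyset$ by \cite{1-ruelle}, while SSM forces the Gibbs measure to be unique, so $\mu$ is well defined; since $(X,\{KF_n\})$ satisfies condition (D) by hypothesis, Corollary~\ref{corsupp} yields $\supp(\mu) = X$, whence $\supp(\nu) \subseteq X = \supp(\mu)$ for every $\nu \in \invMeasH$, $\mathrm{P}(\Phi)$ exists by Theorem~\ref{thm1}, and for each finite $M \subseteq G^-$ the local function $f_M(x) := \mu([x_K] \vert [x_M]) = \mu([x_{K \cup M}])/\mu([x_M])$ is well defined and continuous on $X$. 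For the net convergence, the key observation is that the sets $T^-_{n,h} = KF_n h^{-1} \cap G^-$, over $(n,h) \in [\{F_n\}]$, are cofinal among the finite subsets of $G^- = KH^-$: any finite $M \subseteq G^-$ lies in $KM'$ for some finite $M' \subseteq H^-$, and by the F{\o}lner property there is, for all large $n$, an $h \in F_n$ with $M'h \subseteq F_n$, so $T^-_{n,h} \supseteq KM' \supseteq M$; since $(n_1,h_1) \preccurlyeq (n_2,h_2)$ forces the inclusion $T^-_{n_1,h_1} \subseteq T^-_{n_2,h_2}$, \emph{(ii)} makes the net $\{f_{T^-_{n,h}}\}$ uniformly Cauchy, hence uniformly convergent to some $f_\infty$ on $X$. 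By \emph{(i)}, $f_M \geq c_\mu > 0$, so each $-\log f_M$ is bounded by $\log(1/c_\mu)$ and, by the mean value theorem for $\log$, $I_\mu(K \vert T^-_{n,h}) = -\log f_{T^-_{n,h}}$ converges uniformly --- hence in $L^1_\nu$ --- to $-\log f_\infty =: I_{\mu,\nu}(K \vert G^-)$, a bounded function that does not depend on $\nu$ (which is why the statement writes $I_\mu(K \vert G^-)$).

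For the remaining hypothesis I would reuse the same uniform bound: $\|I_\mu(K \vert T^-_{n,h})\|_\nu \leq \log(1/c_\mu)$ for all $(n,h)$, together with $|F_{n,\nu}|/|F_n| \to 1$ for the sequence $\{F_{n,\nu}\}$ ($F_{n,\nu} \subseteq F_n$) produced in Proposition~\ref{prop2} via Lemma~\ref{lemBR}, give $|F_n|^{-1} \sum_{h \in F_n \setminus F_{n,\nu}} \|I_\mu(K \vert T^-_{n,h})\|_\nu \leq \log(1/c_\mu)\,|F_n \setminus F_{n,\nu}|/|F_n| \to 0$. With both hypotheses of Theorem~\ref{pressureRep} ($\ell = 1$) in place, the reduction from the first paragraph finishes the argument; and when $\nu \in \ergMeasH$ the conditional expectation is $\nu$-a.e.\ constant, so the formula collapses to $\mathrm{P}(\Phi) = \int (I_\mu(K \vert G^-) + \varphi_K)\,d\nu$ as asserted.

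The hard part will be inputs \emph{(i)} and \emph{(ii)}: carrying the implications ``SSM $\Rightarrow$ uniform convergence of the finite-volume conditional probabilities'' and ``TSSM $\Rightarrow$ a uniform positive lower bound on them'' over from $\Z^d$, where they are essentially \cite{1-marcus} and \cite{1-briceno}, to an arbitrary finitely generated $G$ via the word metric, and checking that they still apply when the conditioning sets are drawn from $G^- = KH^-$ --- which is an honest algebraic past only of the subgroup $H$, and not of $G$, the latter not even being assumed orderable. Once these metric correlation-decay statements are available, the rest is the cofinality remark about $[\{F_n\}]$ plus routine bounded-convergence bookkeeping.
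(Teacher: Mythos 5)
Your proposal matches the paper's intended argument: the paper offers no separate proof of Corollary~\ref{corPress}, deriving it from the discussion in Sections~\ref{sec51}--\ref{examples}, which is precisely your verification that the word-metric analogues of (A2) and (A3) --- SSM giving uniform net-Cauchyness of $\mu([x_K]\vert[x_M])$ and TSSM giving the uniform lower bound $c_\mu>0$ --- yield, via the mean value theorem and the cofinality of $\{T^-_{n,h}\}$ among finite subsets of $G^-=KH^-$, the two hypotheses of Theorem~\ref{pressureRep} with $\ell=1$. One caveat you inherit from the paper rather than introduce: tracing Proposition~\ref{mainProp} carefully produces a factor $\tfrac{1}{[G:H]}$ in front of $\int(I_{\mu,\nu}(K\vert G^-)+\varphi_K)\,d\nu$ (check the full shift with $G=\Z$, $H=2\Z$, $\Phi=0$), which both the $\ell=1$ clause of Theorem~\ref{pressureRep} and the corollary's statement drop.
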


As an application of Corollary \ref{corPress}, we have the following: In \cite{1-gamarnik}, it was shown that the pressure of the \emph{hardcore lattice gas model with activity $\lambda$} in the \emph{$d$-dimensional hypercubic lattice $\Z^d$} has a pressure representation that corresponds to a very particular case of Theorem \ref{pressureRep}. Moreover, in \cite{1-gamarnik}, it is proven that this special representation --which corresponds to the case where  $\nu$ is an atomic measure supported on a single fixed point-- holds whenever the model satisfies SSM in the Cayley graph associated to the canonical generating set of $\Z^d$. Using similar arguments and relying on Corollary \ref{corPress}, now it can be checked that, for sufficiently small $\lambda > 0$, the same result is true for the hardcore model on any Cayley graph of any finitely generated virtually orderable group $G$ and for any $\nu \in \invMeasH$. In other words, for this family of models (that consists of $G$-subshifts $X \subseteq \{0,1\}^G$ that satisfies TSSM, where $0$ is a safe symbol and $\Phi$ is a single-site potential modulated by $\lambda$), if the parameter $\lambda$ is small enough, then SSM holds and we can assert that $\mathrm{P}(\Phi) = \int{(I_\mu(K \vert G^-) + \varphi_K)}d\nu$ for any $\nu \in \invMeasH$. By small enough, we mean that $\lambda$ satisfies a criterion like, for example, the one that appears in \cite{1-weitz}, which depends exclusively on the degree of the Cayley graph. In particular, if $\nu = \delta_{0^G}$, we have that $\mathrm{P}(\Phi) = I_\mu(0^K \vert 0^{G^-})$, as in the original case from \cite{1-gamarnik} for $G = \Z^d$, but in a much more general setting. This will be further explored in future work.

\subsection{Case: $\nu = \mu$}
\label{mueqnu}

A very important case is when the measure $\nu$ coincides with the Gibbs measure $\mu$. By Theorem \ref{pinsker} and Theorem \ref{thm1}.(2), we know that
\begin{equation}
\mathrm{P}(\Phi) = h(\mu) + \int{\varphi}d\mu = \int{(I_{\mu,G^-} + \varphi)}d\mu.
\end{equation}

Now, we show that we can obtain a more general expression for $\mathrm{P}(\Phi)$, which resembles the work in \cite{1-helvik}, but in the context of amenable virtually orderable groups. In the context of Theorem \ref{pressureRep}, if $\nu = \mu$, we first claim that
\begin{equation}
\mathrm{P}(\Phi) = \frac{1}{[G:H]} \sum_{i=1}^\ell \int{\left(I_\mu(L_i \vert G^-_i) + \frac{1}{\ell} \varphi_K\right)}d\mu,
\end{equation}
because the assumptions
\begin{enumerate}
\item $I_\mu(L_i \vert T^-_{n,h}(i)) \xrightarrow[{[\{F_n\}]}]{L^1_\mu} I_{\mu,\mu}(L_i \vert G^-_i)$ and
\item $|F_n|^{-1} \sum_{h \in F_n \setminus F_{n,\mu}} \|I_\mu(L_i \vert T^-_{n,h}(i))\|_{\mu}  \xrightarrow[n \to \infty]{} 0$
\end{enumerate}
follow for free. It suffices to prove that
\begin{eqnarray}
I_{\mu,\mu}(L_i \vert G^-_i)(x) \stackrel{\mu(x)\text{-a.e.}}{=}  I_{\mu}(L_i \vert G^-_i)(x)	&	\text{for all $1 \leq i \leq \ell$.}
\end{eqnarray}

Here, by $I_{\mu,\mu}(L_i \vert G^-_i)$, we mean the limit $I_{\mu}(L_i \vert T^-_{n,h}(i)) \xrightarrow[{[\{F_n\}]}]{L^1_\mu} I_{\mu,\mu}(L_i \vert G^-_i)$ and $I _{\mu,G^-}$ refers to the information function of $\alpha^{L_i}$ conditioned on $\B_{G^-}$. The existence of the limit and the equality follow from \cite[Lemma 2.3]{1-krengel} adapted to our case.

\begin{lemma}[{\cite[Lemma 2.3]{1-krengel}}]
\label{kreng}
Let $\Theta$ be a denumerable directed set and $\{\B_\theta: \theta \in \Theta\}$ an increasing family of sub-$\sigma$-algebras of $\B_G$. If $M \in \F(G)$ and $\C_\infty = \bigvee_\theta \B_\theta$ is the $\sigma$-algebra generated by all $\B_\theta$, then $I_\mu(\alpha^M \vert \C_\theta) \xrightarrow[\Theta]{L^1_\mu} I_\mu(\alpha^M \vert \C_\infty)$.
\end{lemma}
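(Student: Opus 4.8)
The plan is to run a uniform-integrability argument, taking advantage of the fact that $\alpha^M = \{[x_M] : x \in X\}$ is a \emph{finite} partition, say with atoms $A_1,\dots,A_k$, so that $I_\mu(\alpha^M \vert \C_\theta) = -\sum_{j=1}^{k} 1_{A_j}\log\mu(A_j \vert \C_\theta)$ is a finite sum (and similarly for $\C_\infty$). The two ingredients will be: convergence in $\mu$-measure along $\Theta$, and uniform integrability of the family $\{I_\mu(\alpha^M \vert \C_\theta)\}_{\theta}$; Vitali's theorem then closes the argument.

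The first step is to show that the conditional probabilities themselves converge, namely $\mu(A_j \vert \C_\theta) \to \mu(A_j \vert \C_\infty)$ in $L^1_\mu$ along $\Theta$ for each $j$. Because $\Theta$ is directed and $\{\B_\theta\}$ is increasing, $\bigcup_\theta \B_\theta$ is an algebra and $\C_\infty = \sigma\!\left(\bigcup_\theta \B_\theta\right)$; hence any bounded $\C_\infty$-measurable function, in particular $\mu(A_j \vert \C_\infty)$, can be $L^1_\mu$-approximated by a $\B_{\theta_0}$-measurable function $g$ for some $\theta_0 \in \Theta$. For $\theta \succcurlyeq \theta_0$ one has $\mu(A_j \vert \C_\theta) = \E_\mu[\mu(A_j \vert \C_\infty) \vert \C_\theta]$ and $\E_\mu[g \vert \C_\theta] = g$, so $\|\mu(A_j \vert \C_\theta) - \mu(A_j \vert \C_\infty)\|_\mu \le 2\|g - \mu(A_j \vert \C_\infty)\|_\mu$, which can be made arbitrarily small. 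I would carry out this elementary $\epsilon$-approximation explicitly rather than invoke a general net martingale theorem, since for arbitrary nets the upward convergence statement is false; here it is precisely the generation of $\C_\infty$ by the algebra $\bigcup_\theta \B_\theta$ that makes it go through. Since $\mu(A_j \vert \C_\infty) > 0$ $\mu$-a.e. on $A_j$, continuity of $t \mapsto -\log t$ on $(0,\infty)$ upgrades this to convergence in $\mu$-measure of $1_{A_j}\log\mu(A_j \vert \C_\theta)$, and summing over the finitely many $j$ gives $I_\mu(\alpha^M \vert \C_\theta) \to I_\mu(\alpha^M \vert \C_\infty)$ in $\mu$-measure along $\Theta$.

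The second step is uniform integrability of $\{I_\mu(\alpha^M \vert \C_\theta)\}_{\theta \in \Theta}$ with a bound depending only on $k$. Fix $\lambda \ge 1$. On the atom $A_j$ one has $\{I_\mu(\alpha^M \vert \C_\theta) > \lambda\} \cap A_j = B_{j,\theta} \cap A_j$, where $B_{j,\theta} := \{\mu(A_j \vert \C_\theta) < e^{-\lambda}\} \in \C_\theta$; then, pulling out the $\C_\theta$-measurable factor via $\E_\mu[1_{A_j} \vert \C_\theta] = \mu(A_j \vert \C_\theta)$ and using that $t \mapsto -t\log t$ is increasing on $(0,e^{-1}]$,
\[
\E_\mu\!\left[1_{A_j}1_{B_{j,\theta}}(-\log\mu(A_j \vert \C_\theta))\right] = \E_\mu\!\left[1_{B_{j,\theta}}\big({-\mu(A_j \vert \C_\theta)\log\mu(A_j \vert \C_\theta)}\big)\right] \le \lambda e^{-\lambda}.
\]
Summing over $j$ yields $\E_\mu\!\left[I_\mu(\alpha^M \vert \C_\theta)\,1_{\{I_\mu(\alpha^M \vert \C_\theta) > \lambda\}}\right] \le k\lambda e^{-\lambda}$, uniformly in $\theta$, which is the desired uniform integrability; note that the finiteness of $\alpha^M$ is genuinely used here.

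Finally, convergence in $\mu$-measure together with uniform integrability gives $L^1_\mu$-convergence by Vitali's theorem (which also yields $I_\mu(\alpha^M \vert \C_\infty) \in L^1_\mu$, consistent with $\E_\mu I_\mu(\alpha^M \vert \C_\infty) = H_\mu(\alpha^M \vert \C_\infty) \le \log k$), and this is exactly the assertion $I_\mu(\alpha^M \vert \C_\theta) \xrightarrow[\Theta]{L^1_\mu} I_\mu(\alpha^M \vert \C_\infty)$. The step I expect to be the main obstacle is the first one: making the net version of upward martingale convergence precise, since the naive statement for general nets fails, and one must lean on the facts that $\bigcup_\theta \B_\theta$ is an algebra and that $\C_\infty$ is the $\sigma$-algebra it generates.
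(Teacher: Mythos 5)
The paper does not actually prove this lemma: it is quoted verbatim as \cite[Lemma 2.3]{1-krengel} and used as a black box, so there is no internal proof to compare against. Your argument is a correct, self-contained proof of the cited result, and it is sound in all its steps: the algebra $\bigcup_\theta \B_\theta$ generates $\C_\infty$ because $\Theta$ is directed and the family is increasing, so the $\epsilon$-approximation plus the tower property gives $\mu(A_j \vert \C_\theta) \to \mu(A_j \vert \C_\infty)$ in $L^1_\mu$ along $\Theta$; the conditional probabilities are a.e.\ strictly positive on the corresponding atoms (for $\C_\theta$ as well as for $\C_\infty$, by the same $\E_\mu[1_{A_j}1_{\{\mu(A_j\vert\C)=0\}}]=0$ computation), so the passage through $-\log$ to convergence in measure is legitimate; and the pull-out computation giving $\E_\mu[I\,1_{\{I>\lambda\}}] \le k\lambda e^{-\lambda}$ is exactly right and genuinely exploits finiteness of $\alpha^M$. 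It is worth noting that your route differs from the classical textbook proof of this statement (the one behind Krengel's lemma and the ``increasing martingale theorem for information'' in Parry or Petersen), which establishes the Chung--Neveu maximal inequality $\mu(\sup_\theta I_\mu(\alpha^M\vert\C_\theta) > \lambda) \le k e^{-\lambda}$, deduces $\sup_\theta I_\mu(\alpha^M\vert\C_\theta) \in L^1_\mu$, and concludes by a.e.\ martingale convergence plus dominated convergence. Your Vitali route is arguably better adapted to the net setting, since a.e.\ convergence along a general directed set is exactly the part of martingale theory that can fail, whereas convergence in measure plus uniform integrability survives. One small imprecision in your commentary only: the $L^1_\mu$ upward martingale convergence $\E_\mu[f\vert\C_\theta] \to \E_\mu[f\vert\C_\infty]$ does hold for arbitrary increasing families over directed sets (by precisely the approximation argument you give, since $\C_\infty$ is by definition generated by $\bigcup_\theta\B_\theta$); it is the pointwise version that is false for general nets, so your decision to argue through convergence in measure rather than a.e.\ convergence is the right call, but the $L^1$ step itself was never in danger.
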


In our case, $\Theta = [\{F_n\}]$, $\C_{(n,h)} = \B_{T_{n,h}^-(i)}$, and $\C_\infty = \B_{G^-_i}$, so $\B_{G^-_i} = \bigvee_{(n,h) \in  [\{F_n\}]} \B_{T_{n,h}^-(i)}$, and $M = L_i$. Then, it is guaranteed that
\begin{equation}
I_{\mu}(L_i \vert T^-_{n,h}(i)) \xrightarrow[{[\{F_n\}]}]{L^1_\mu} I_{\mu}(L_i \vert G^-_i).
\end{equation}

On the other hand, it can be checked that
\begin{equation}
|F_n|^{-1} \sum_{h \in F_n \setminus F_{n,\mu}} \|I_\mu(L_i \vert T^-_{n,h}(i))\|_{\mu}  \xrightarrow[n \to \infty]{} 0,
\end{equation}
since $\|I_\mu(L_i \vert T^-_{n,h}(i))\|_{\mu} \leq H_\mu(\alpha^{L_i}) < \infty$ and $\frac{|F_n \setminus F_{n,\mu}|}{|F_n|} \to 0$. Therefore, Theorem \ref{pressureRep} applies.

\subsection{Entropy of lattice systems}
\label{helviklindgren}

To finish, notice that the decomposition given in Eq. (\ref{formula}), Lemma \ref{kreng}, and the discussion above hold for arbitrary $\nu \in \invMeasG$, not necessarily a Gibbs measure. Reusing these ideas, we can conclude that
\begin{equation}
\label{helvik1}
h(\nu) = \frac{1}{[G:H]} \sum_{i=1}^\ell \int{I_\nu(L_i \vert G^-_i)}d\nu = \frac{1}{[G:H]} \sum_{i=1}^\ell H_\nu(\alpha^{L_i} \vert \B_{G^-_i}).
\end{equation}

In \cite{1-helvik}, Helvik and Lindgren explored ways to represent measure-theoretical entropy of lattice systems as sums of conditional entropies. Formally, they considered an arbitrary \emph{$d$-dimensional point lattice} $G$ (see \cite{1-helvik} for all relevant definitions) and a \emph{tiling} $(K,H)$ consisting of a finite subset $K \subseteq G$ with $e \in K$ and a $d$-dimensional subgroup $H \leq G$ satisfying
\begin{enumerate}
\item $K + H = G$ and
\item $(K + h_1) \cap (K + h_2) = \emptyset$ for all $h_1,h_2 \in H$, $h_1 \neq h_2$.
\end{enumerate}

Then, they had the following theorem (where we have combined notation and terminology from their work and the present one in order to help the reader to compare both of them).
\begin{theorem}[{\cite[Theorem 1]{1-helvik}}]
Let $G$ be a $d$-dimensional point lattice, $\nu \in \mathcal{M}_G(S^G)$, and $(K,H)$ be a tiling of $G$. Partition $K$ into $\ell$ nonempty pairwise disjoint sets $L_1,\dots,L_\ell$, with $1 \leq \ell \leq |K|$, and define $K_i = \bigcup_{j=1}^{i} L_j$. Then,
\begin{equation}
\label{helvik2}
h(\nu) = \frac{1}{|K|} \sum_{i=1}^\ell H_\nu\left(\alpha^{L_i} \vert \B_{(L_i + H^-) \cup (K_{i-1} + H)}\right).
\end{equation}
\end{theorem}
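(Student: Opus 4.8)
The plan is to obtain this as the full-shift specialization of the entropy decomposition Eq.~(\ref{helvik1}), rereading the statement in the notation of Section~\ref{sec4}. First I would translate the hypotheses. A $d$-dimensional point lattice $G$ is a torsion-free abelian group isomorphic to $\Z^d$, hence countable, discrete, amenable and orderable (e.g.\ lexicographically), and a tiling $(K,H)$ is precisely a finite-index subgroup $H\le G$ together with a transversal $K$, written additively as $G=K+H$ with the translates $\{K+h\}_{h\in H}$ pairwise disjoint, so $|K|=[G:H]<\infty$. Setting $X=S^G$ (the full shift, which trivially satisfies condition (D) and any mixing hypothesis, so the topological side is vacuous) and taking $\nu\in\invMeasG$, we land in the abelian, full-shift instance of the framework of Section~\ref{sec4}; in additive notation the past-coset sets become $G^-_i=(L_i+H^-)\cup(K_{i-1}+H)$, matching the conditioning $\sigma$-algebras in Eq.~(\ref{helvik2}). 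The point is that the ingredients behind Eq.~(\ref{helvik1}) --- the sequential decomposition Eq.~(\ref{formula}), Krengel's lemma, and a trimming argument, followed by integration --- all go through essentially verbatim for an arbitrary $\nu\in\invMeasG$, not just for a Gibbs measure.

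Concretely, I would fix a F{\o}lner sequence $\{F_n\}$ for $H$ (any exhausting sequence of boxes), so that $T_n:=K+F_n$ is F{\o}lner for $G$ by \lemref{fol1}, and apply Eq.~(\ref{formula}) to $\nu$:
\[
-\log\nu([x_{T_n}]) \;=\; \sum_{i=1}^{\ell}\sum_{h\in F_n} I_\nu(L_i \vert T^-_{n,h}(i))(h\cdot x), \qquad T^-_{n,h}(i)=(T_n-h)\cap G^-_i .
\]
Integrating against $\nu$, dividing by $|T_n|=|K|\,|F_n|$, and using the $H$-invariance of $\nu$ to replace each integrated summand by $H_\nu(\alpha^{L_i}\vert\B_{T^-_{n,h}(i)})$ --- a quantity depending on $(n,h)$ only through the finite set $T^-_{n,h}(i)$ --- yields
\[
|T_n|^{-1} H_\nu(\alpha^{T_n}) \;=\; \frac{1}{|K|}\sum_{i=1}^{\ell}\;\frac{1}{|F_n|}\sum_{h\in F_n} H_\nu(\alpha^{L_i}\vert\B_{T^-_{n,h}(i)}).
\]
The left-hand side converges to $h(\nu)$ by the definition of the Kolmogorov--Sinai entropy, so it remains to identify the limit of the right-hand side term by term.

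For fixed $i$ I would combine three facts: (a) the F{\o}lner-exhaustion statement of \lemref{lemBR}, which gives $\B_{G^-_i}=\bigvee_{(n,h)\in[\{F_n\}]}\B_{T^-_{n,h}(i)}$ along the directed set $[\{F_n\}]$; (b) \lemref{kreng} (Krengel), which then yields $I_\nu(L_i\vert T^-_{n,h}(i))\to I_\nu(L_i\vert G^-_i)$ in $L^1_\nu$ along $[\{F_n\}]$, hence, integrating, $H_\nu(\alpha^{L_i}\vert\B_{T^-_{n,h}(i)})\to H_\nu(\alpha^{L_i}\vert\B_{G^-_i})$ along the net; and (c) the crude uniform bound $0\le H_\nu(\alpha^{L_i}\vert\B_{T^-_{n,h}(i)})\le H_\nu(\alpha^{L_i})\le\log|X_{L_i}|<\infty$. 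Feeding (a)--(c) into the trimming device of Proposition~\ref{prop2} --- discard the $o(|F_n|)$ indices $h\in F_n\setminus F_{n,\nu}$ on which the information functions are not yet controlled, absorbing their total contribution via (c), while for the remaining $h$ the pair $(n,h)$ is pushed arbitrarily far out in $\preccurlyeq$ --- gives $|F_n|^{-1}\sum_{h\in F_n} H_\nu(\alpha^{L_i}\vert\B_{T^-_{n,h}(i)})\to H_\nu(\alpha^{L_i}\vert\B_{G^-_i})$. Substituting this back and rewriting $G^-_i=(L_i+H^-)\cup(K_{i-1}+H)$ produces Eq.~(\ref{helvik2}) exactly.

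The hard part will be step (c) together with the trimming: a Cesàro average over $h\in F_n$ of a quantity that converges \emph{only along a net} need not converge to the net limit, and it does so here precisely because \lemref{lemBR} forces a $1-o(1)$ fraction of the indices $h\in F_n$ to correspond to pairs lying beyond any prescribed threshold of $\preccurlyeq$, with the uniform bound in (c) making the $o(|F_n|)$ exceptional indices harmless. Everything else is bookkeeping: the full ergodic theorem is not needed for this \emph{integrated} identity (it would enter only for a pointwise or conditional refinement), and no SFT, topological-mixing or Gibbs hypothesis plays any role since $X=S^G$; the whole argument is simply the $X=S^G$, arbitrary $\nu\in\invMeasG$ instance of what is proved around Eq.~(\ref{helvik1}).
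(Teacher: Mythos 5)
Your proposal is correct and follows essentially the same route as the paper: Section~\ref{helviklindgren} obtains the result by applying the sequential decomposition of Eq.~(\ref{formula}) to an arbitrary $\nu\in\invMeasG$, using Lemma~\ref{kreng} for the net convergence $I_\nu(L_i\vert T^-_{n,h}(i))\to I_\nu(L_i\vert G^-_i)$ along $[\{F_n\}]$, and handling the exceptional indices via the trimming of Lemma~\ref{lemBR}/Proposition~\ref{prop2} with the uniform bound $H_\nu(\alpha^{L_i}\vert\cdot)\le H_\nu(\alpha^{L_i})<\infty$, exactly as you describe. Your only addition is to make explicit the integration step and the observation that the ergodic theorem is not needed for the integrated identity, which is a faithful fleshing-out of the paper's sketch rather than a different argument.
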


By comparing Eq. (\ref{helvik1}) and Eq. (\ref{helvik2}), it is easy to see that, after identifying and renaming some terms, the two equations are identical. This gives a generalization of the result from Helvik and Lindgren and, in particular, covers both the abelian and non-abelian (virtually orderable) cases.

\section*{Acknowledgements}

I would like to thank Brian Marcus for his relevant suggestions and comments, Fabio Martinelli for his help regarding complete analytical interactions, Ricardo G\'omez and Siamak Taati for pointing out fundamental references, and Crist\'obal Rivas for his orientation regarding the study of orderable groups. I would also like to acknowledge Rodrigo Bissacot, Lu\'isa Borsato, and the anonymous referee for their valuable comments that helped to improve the presentation of this work.

The author acknowledges the support of ERC Starting Grants 678520 and 676970.

\bibliographystyle{ijmart}
\bibliography{ijmsample}

\end{document}